\newtheorem{theorem}{Theorem}
\newtheorem{assumption}{Assumption}
\newtheorem{definition}[theorem]{Definition}
\newtheorem{lemma}[theorem]{Lemma}
\newtheorem{claim}[theorem]{Claim}
\newtheorem{remark}{Remark}
\def\qed{\hbox{${\vcenter{\vbox{		 
   \hrule height 0.4pt\hbox{\vrule width 0.5pt height 6pt
   \kern5pt\vrule width 0.5pt}\hrule height 0.4pt}}}$}}
\def\cA{\mathcal A}
\def\cC{\mathcal C}
\def\cG{\mathcal G}
\def\cR{\mathcal R}
\def\bI{\mathbb I}
\def\b0{\mathbb O}
\def\eps{\varepsilon}
\def\t{\tilde}
\let\Re\undefined
\DeclareMathOperator{\Re}{Re}
\def\diag{\mathop{\rm diag}}
\newcommand{\ste}[1]{\textcolor{red}{#1}}
\begin{document}

\title{Global stability of multi-group SAIRS  epidemic  models}
\author{Stefania Ottaviano$^{1,2}$, Mattia Sensi$^{3,4,*}$, Sara Sottile$^5$
\\[1em]
\small $^1$ University of Padua, Dept. of Mathematics “Tullio Levi Civita”, Viale Trieste 63, 35131 Padova, Italy.\\
\small $^2$ University of Trento, Dept.~of Civil, Environmental and Mechanical Engineering, Via Mesiano, 77, 38123 Trento, Italy.\\
\small $^3$MathNeuro Team, Inria at Universit\' e C\^ote d'Azur, 2004 Rte des Lucioles, 06410 Biot, France.\\
\small $^4$Politecnico di Torino, Corso Duca degli Abruzzi 24, 10129 Torino Italy\\
\small $^*$Corresponding author: \texttt{mattia.sensi@polito.it}\\
\small $^5$University of Trento, Dept. of Mathematics, Via Sommarive 14, 38123 Povo - Trento, Italy.}

\maketitle

\begin{abstract}
We study a multi-group SAIRS-type epidemic model with vaccination. The
role of asymptomatic and symptomatic infectious individuals is explicitly considered in the transmission pattern of the disease 
among the groups in which the population is divided. 
This is a natural extension of the homogeneous mixing SAIRS model with vaccination studied in Ottaviano et. al (2021) to a network of communities. We provide a global stability analysis for the model. We determine the value of the basic reproduction number $\mathcal{R}_0$ and prove that the disease-free equilibrium is globally asymptotically stable if $\mathcal{R}_0 < 1$. In the case of the SAIRS model without vaccination, we prove the global asymptotic stability of the disease-free equilibrium also when $\mathcal{R}_0=1$. Moreover, if $\mathcal{R}_0 > 1$, the disease-free equilibrium is unstable and a unique endemic equilibrium exists. First, we investigate the local asymptotic stability of the endemic equilibrium and subsequently its global stability, for two variations of the original model. Last, we provide numerical simulations to compare the epidemic spreading on different networks topologies. 
\bigskip

\noindent
\textbf{Keywords.} Susceptible--Asymptomatic infected--symptomatic Infected--Recovered--Susceptible, Multi-group models, Vaccination, Basic Reproduction Number, Lyapunov functions, Global asymptotic stability, Graph-theoretic method
\smallskip

\noindent
\textbf{MSC2010.} 34A34, 34D20, 34D23, 37N25, 92D30
\end{abstract}

\section{Introduction}
One of the most common assumptions in classic population models is the homogeneity of interactions between individuals, which then happen completely
at random. While such an assumption significantly simplifies the analysis of the models, it can be beneficial to renounce it and to formulate models with more realistic interactions. Heterogeneity in the interactions among the population can depend on many factors. The most common division regards the geographical distinction and the membership to different communities, cities or countries, in which the same infectious disease can have a different behaviour based on the group under study.

The division in groups can also depend on the specific disease under study. For example, individuals can be divided into age groups to study children's diseases, such as measles, mumps or rubella, or can be differentiated by the number of sexual partners for sexually transmitted infections.

Multi-group models can also be useful to study disease transmitted via vectors or multiple hosts, such as Malaria or West-Nile virus. 

The concept of equitable partitions has been used to study networks partitioned into local communities with some regularities in their structure, in the case of SIS and SIRS models \cite{bonaccorsi2015epidemic,ottaviano2019community,ottaviano2021some}, by means of the N-Intertwined Mean-Field approximation \cite{van2011n}. In the aforementioned works, the macroscopic structure of hierarchical networks is described by a quotient
graph and the stability of the endemic equilibrium can be investigated by a lower-dimensional system with respect to the starting one.

Several authors proposed multi-group models to describe the transmission behaviour between different communities or cities, see for example \cite{Lajmanovich1976,huang1992stability,yu2009global}.
In this paper, we assume that each individual 
interacts within a network of relationships, due e.g. to different social or spatial patterns; individuals are hence divided into groups, which are not isolated from one another.

As in the homogeneous mixing case, the stability analysis of the equilibrium points of the system under investigation allows to understand its long-term behaviour and, hence, to obtain some insight into how the prevalence of an endemic disease depends on the parameters of the model \cite{thieme1985local} and, in this case, on the network topology.
However, the problem of existence and global stability, especially for the endemic equilibrium, is often mathematical challenging; unfortunately, for many complex multi-group models it remains an open question, or requires cumbersome conditions \cite{mohapatra2015compartmental}.
In this framework, Guo et al. \cite{guo2006global,guo2008graph} and Li and Shuai \cite{li2010global} developed a graph-theoretic method to find Lyapunov functions 
for some multi-group epidemic models 
 which has recently allowed to obtain 
 various results on the global dynamics of SIRS-type models \cite{Muroya2013,muroya2014further} and SEIRS-type models \cite{fan2018global}. 
 
In this paper, we present a multi-group model, as extension of the SAIRS-type model proposed in \cite{ottaviano2022global}, 
where the role of asymptomatic and symptomatic infectious individuals in the disease transmission has been explicitly considered. Asymptomatic cases often remain unidentified and possibly have more contacts than symptomatic individuals, allowing the virus to circulate widely in the population \cite{day2020covid,oran2020prevalence,oran2021proportion,johansson2021sars}. 
The so-called “silent spreaders” are playing a significant role 
even in the current Covid-19 pandemic and numerous recent papers have considered their contribution in the virus transmission 
(see, e.g., \cite{calvetti2020metapopulation,peirlinck2020visualizing,park2020time,li2020substantial,stella2022role}). 
 However, this contribution 
 has proved relevant also for other communicable diseases, such as influenza, cholera, and shigella \cite{kemper1978effects,nelson2009cholera,stilianakis1998emergence,robinson2013model}.

Although models incorporating asymptomatic individuals already exist in the literature,
they have not been analytically investigated as thoroughly as more famous compartmental models.
Since these types of models have been receiving much more attention lately, we believe they deserve a deeper understanding from a theoretical point of view. Thus, we aim to partially fill this gap and provide a stability analysis of the multi-group system under investigation.

In our model, we denote with $S_i$, $A_i$, $I_i$ and $R_i$, $i=1,\dots,n$, the fraction of Susceptible, Asymptomatic infected, symptomatic Infected and Recovered individuals, respectively, in the $i-$th group, such that $S_i + A_i + I_i + R_i = 1$. We remark that, from here on, we will use the terms community and group interchangeably.

The disease can be transmitted by individuals in the classes $A_i$ and $I_i$, within their group, to the susceptible $S_i$, with transmission rate $\beta^A_{ii}$ and $\beta^I_{ii}$, respectively, but also between different groups: e.g., individuals $A_j$ and $I_j$, belonging to the $j$-th community, may infect susceptible individuals $S_i$ of group $i$ with transmission rate $\beta^A_{ij}$ and $\beta^I_{ij}$, respectively. From the asymptomatic compartment, an individual can either progress to the class of symptomatic
infectious 
or recover without ever developing symptoms. 
We assume that the average time of the symptoms developing, denoted by $1/\alpha$, and the recovery rates from both the infectious compartments, $\delta_A$ and $\delta_I$, do not depend on the community of origin, i.e. these parameters depend only on the disease. Furthermore, the average time to return to the susceptible state, $1/\gamma$, only depends on the specific disease under study, and not on the community to which an individual belongs. The remaining parameters of the model depend on the 
community's membership. First, the proportion of susceptible individuals who receive the vaccine might be different for each group; we denote with $\nu_i$, $i=1,\dots,n$, the proportion of susceptible in the $i-$th group who receive a vaccine-induced temporary immunity. Moreover, $\mu_i$, $i = 1,\dots, n$ represent both the birth rates and the natural death rates in community $i$. Finally, individuals of different communities may have contacts each other, by direct transport, but they never permanently move to another community. Therefore, the total population in each group may only change through births and natural deaths; we do not distinguish between natural deaths and disease-related deaths.

\subsection{Outline}

The paper is organised as follows. In Sec.~\ref{model}, we present the system of equations for the multi-group SAIRS model with vaccination, providing its positive
invariant set. In Sec.~\ref{disease_elimination}, we determine  
the  basic  reproduction  number $\mathcal{R}_0$ and prove that the disease-free equilibrium (DFE) is globally asymptotically stable (GAS) if $\mathcal{R}_0 < 1$ and unstable if $\mathcal{R}_0>1$. Moreover, we prove the GAS of the DFE also in the case $\mathcal{R}_0 = 1$, for the model in which no vaccination is administered to the susceptible individuals. In Sec.~\ref{ex_EE}, we prove the existence and uniqueness of an endemic equilibrium (EE) by a fixed point argument, as in \cite{thieme1985local}, since there is no explicit expression for $\mathcal{R}_0$.
In Sec.~\ref{local_stability}, we provide sufficient conditions for the local asymptotic stability of the EE.
In Sec.~\ref{global_stability}, we discuss the uniform persistence of the disease and we investigate the global asymptotic stability of the EE for two variations of the original model under study. Precisely, in Thm. \ref{thm:GAS_SAIR}, we study the global stability of the SAIR model 
(i.e. $\gamma = 0$) and we prove that the EE is GAS if $\mathcal{R}_0 > 1$. 
In Sec.~\ref{GAS_SAIRS}, we establish sufficient conditions for the GAS of the EE for the SAIRS model (i.e., $\gamma \neq 0$) with vaccination, under the restriction that asymptomatic and symptomatic individuals have the same average recovery 
period, i.e. $\delta_A = \delta_I$. 
The problem of the global stability of the endemic equilibrium in the most general case, i.e. $\delta_A \ne \delta_I$, remains open.
In Sec.~\ref{simulations}, we provide some numerical simulations in which we simulate the evolution of the epidemics in four different structures of community networks. 

\section{The model}\label{model}
The system of ODEs which describes the evolution of the disease in the $i$-th community is the following:
\begin{equation}\label{sairs_net}
\begin{split}
     \frac{d S_i(t)}{dt} &= \mu_i  - \sum_{j=1}^n \bigg(\beta^{A}_{ij} A_j(t) + \beta^{I}_{ij} I_j(t)\bigg)S_i(t) -(\mu_i + \nu_i) S_i(t) +\gamma R_i(t),\\ 
     \frac{d A_i(t)}{dt} &=  \sum_{j=1}^n \bigg(\beta^{A}_{ij} A_j(t) + \beta^{I}_{ij} I_j(t)\bigg)S_i(t) -(\alpha + \delta_A +\mu_i) A_i(t), \\ 
     \frac{d I_i(t)}{dt} &= \alpha A_i(t) - (\delta_{I} + \mu_i)I_i(t), \\ 
     \frac{d R_i(t)}{dt} &=  \delta_A A_i(t) +\delta_I I_i(t) + \nu_i S_i(t) - (\gamma + \mu_i)R_i(t), \qquad \qquad i=1,\dots,n,
     \end{split}
\end{equation}
with initial condition  $(S_1(0), A_1(0), I_1(0), R_1(0),\dots,S_n(0), A_n(0), I_n(0), R_n(0))$ belonging to the set
\begin{equation}\label{gamma_inv}
    \bar \Gamma=\{ (S_1, A_1, I_1, R_1,\dots,S_n,A_n,I_n,R_n) \in \mathbb R_+^{4n}| S_i+ A_i+ I_i+R_i = 1, i=1,\dots,n\},
\end{equation}
where $\mathbb R_+^{4n}$ indicates the non-negative orthant of $\mathbb R^{4n}$. 
The flow diagram representing the interaction among two groups of system (\ref{sairs_net}), as well as their internal dynamics, is given in Figure \ref{fig:SAIRS}.
\begin{figure}[ht!]
			\centering
\tikzset{every picture/.style={line width=0.75pt}} 
\begin{tikzpicture}[x=0.75pt,y=0.75pt,yscale=-1,xscale=1]
\draw   (248.38,87.39) .. controls (248.38,74.58) and (257.97,64.21) .. (269.81,64.21) .. controls (281.64,64.21) and (291.24,74.58) .. (291.24,87.39) .. controls (291.24,100.19) and (281.64,110.57) .. (269.81,110.57) .. controls (257.97,110.57) and (248.38,100.19) .. (248.38,87.39) -- cycle ;
\draw   (388.86,87.39) .. controls (388.86,74.58) and (398.45,64.21) .. (410.29,64.21) .. controls (422.12,64.21) and (431.71,74.58) .. (431.71,87.39) .. controls (431.71,100.19) and (422.12,110.57) .. (410.29,110.57) .. controls (398.45,110.57) and (388.86,100.19) .. (388.86,87.39) -- cycle ;
\draw   (538.86,87.39) .. controls (538.86,74.58) and (548.45,64.21) .. (560.29,64.21) .. controls (572.12,64.21) and (581.71,74.58) .. (581.71,87.39) .. controls (581.71,100.19) and (572.12,110.57) .. (560.29,110.57) .. controls (548.45,110.57) and (538.86,100.19) .. (538.86,87.39) -- cycle ;
\draw    (291.24,87.39) -- (299.57,87.63) -- (385.86,87.4) ;
\draw [shift={(388.86,87.39)}, rotate = 539.8399999999999] [fill={rgb, 255:red, 0; green, 0; blue, 0 }  ][line width=0.08]  [draw opacity=0] (8.93,-4.29) -- (0,0) -- (8.93,4.29) -- cycle    ;
\draw    (431.71,87.39) -- (440.04,87.63) -- (535.86,87.39) ;
\draw [shift={(538.86,87.39)}, rotate = 539.86] [fill={rgb, 255:red, 0; green, 0; blue, 0 }  ][line width=0.08]  [draw opacity=0] (8.93,-4.29) -- (0,0) -- (8.93,4.29) -- cycle    ;
\draw    (269.81,110.57) -- (269.81,138.9) -- (568.62,138.9) -- (568.62,111.91) ;
\draw [shift={(568.62,108.91)}, rotate = 450] [fill={rgb, 255:red, 0; green, 0; blue, 0 }  ][line width=0.08]  [draw opacity=0] (8.93,-4.29) -- (0,0) -- (8.93,4.29) -- cycle    ;
\draw    (80,68.29) -- (80,36.29) -- (568.62,37.16) -- (568.62,63.78) ;
\draw [shift={(568.62,66.78)}, rotate = 270] [fill={rgb, 255:red, 0; green, 0; blue, 0 }  ][line width=0.08]  [draw opacity=0] (8.93,-4.29) -- (0,0) -- (8.93,4.29) -- cycle    ;
\draw    (61,65.29) -- (61,24.29) -- (61,24.29) -- (560,23.29) -- (560.29,64.21) ;
\draw [shift={(61,68.29)}, rotate = 270] [fill={rgb, 255:red, 0; green, 0; blue, 0 }  ][line width=0.08]  [draw opacity=0] (8.93,-4.29) -- (0,0) -- (8.93,4.29) -- cycle    ;
\draw    (290.05,78.37) -- (290.05,78.37) -- (351.43,62.39) ;
\draw [shift={(354.33,61.63)}, rotate = 525.4] [fill={rgb, 255:red, 0; green, 0; blue, 0 }  ][line width=0.08]  [draw opacity=0] (8.93,-4.29) -- (0,0) -- (8.93,4.29) -- cycle    ;
\draw    (431.71,79.66) -- (431.71,79.66) -- (493.1,63.68) ;
\draw [shift={(496,62.92)}, rotate = 525.4] [fill={rgb, 255:red, 0; green, 0; blue, 0 }  ][line width=0.08]  [draw opacity=0] (8.93,-4.29) -- (0,0) -- (8.93,4.29) -- cycle    ;
\draw    (581.71,79.29) -- (581.71,79.29) -- (643.1,63.31) ;
\draw [shift={(646,62.55)}, rotate = 525.4] [fill={rgb, 255:red, 0; green, 0; blue, 0 }  ][line width=0.08]  [draw opacity=0] (8.93,-4.29) -- (0,0) -- (8.93,4.29) -- cycle    ;
\draw    (12,90.29) -- (46.57,89.99) ;
\draw [shift={(49.57,89.96)}, rotate = 539.51] [fill={rgb, 255:red, 0; green, 0; blue, 0 }  ][line width=0.08]  [draw opacity=0] (8.93,-4.29) -- (0,0) -- (8.93,4.29) -- cycle    ;
\draw    (91.05,77.37) -- (246,77.29) ;
\draw [shift={(249,77.29)}, rotate = 539.97] [fill={rgb, 255:red, 0; green, 0; blue, 0 }  ][line width=0.08]  [draw opacity=0] (8.93,-4.29) -- (0,0) -- (8.93,4.29) -- cycle    ;
\draw   (49.57,89.96) .. controls (49.57,77.16) and (59.52,66.78) .. (71.79,66.78) .. controls (84.05,66.78) and (94,77.16) .. (94,89.96) .. controls (94,102.76) and (84.05,113.14) .. (71.79,113.14) .. controls (59.52,113.14) and (49.57,102.76) .. (49.57,89.96) -- cycle ;
\draw    (91.05,77.37) -- (91.05,77.37) -- (152.43,61.39) ;
\draw [shift={(155.33,60.63)}, rotate = 525.4] [fill={rgb, 255:red, 0; green, 0; blue, 0 }  ][line width=0.08]  [draw opacity=0] (8.93,-4.29) -- (0,0) -- (8.93,4.29) -- cycle    ;
\draw [color={rgb, 255:red, 208; green, 2; blue, 27 }  ,draw opacity=1 ]   (91.05,99.37) -- (246,99.29) ;
\draw [shift={(249,99.29)}, rotate = 539.97] [fill={rgb, 255:red, 208; green, 2; blue, 27 }  ,fill opacity=1 ][line width=0.08]  [draw opacity=0] (8.93,-4.29) -- (0,0) -- (8.93,4.29) -- cycle    ;
\draw   (248.38,211.13) .. controls (248.38,223.52) and (257.97,233.56) .. (269.81,233.56) .. controls (281.64,233.56) and (291.24,223.52) .. (291.24,211.13) .. controls (291.24,198.74) and (281.64,188.7) .. (269.81,188.7) .. controls (257.97,188.7) and (248.38,198.74) .. (248.38,211.13) -- cycle ;
\draw   (388.86,211.13) .. controls (388.86,223.52) and (398.45,233.56) .. (410.29,233.56) .. controls (422.12,233.56) and (431.71,223.52) .. (431.71,211.13) .. controls (431.71,198.74) and (422.12,188.7) .. (410.29,188.7) .. controls (398.45,188.7) and (388.86,198.74) .. (388.86,211.13) -- cycle ;
\draw   (538.86,211.13) .. controls (538.86,223.52) and (548.45,233.56) .. (560.29,233.56) .. controls (572.12,233.56) and (581.71,223.52) .. (581.71,211.13) .. controls (581.71,198.74) and (572.12,188.7) .. (560.29,188.7) .. controls (548.45,188.7) and (538.86,198.74) .. (538.86,211.13) -- cycle ;
\draw    (291.24,211.13) -- (299.57,210.89) -- (385.86,211.12) ;
\draw [shift={(388.86,211.13)}, rotate = 180.15] [fill={rgb, 255:red, 0; green, 0; blue, 0 }  ][line width=0.08]  [draw opacity=0] (8.93,-4.29) -- (0,0) -- (8.93,4.29) -- cycle    ;
\draw    (431.71,211.13) -- (440.04,210.89) -- (535.86,211.12) ;
\draw [shift={(538.86,211.13)}, rotate = 180.14] [fill={rgb, 255:red, 0; green, 0; blue, 0 }  ][line width=0.08]  [draw opacity=0] (8.93,-4.29) -- (0,0) -- (8.93,4.29) -- cycle    ;
\draw    (269.81,188.7) -- (269.81,161.29) -- (568.62,161.29) -- (568.62,187.3) ;
\draw [shift={(568.62,190.3)}, rotate = 270] [fill={rgb, 255:red, 0; green, 0; blue, 0 }  ][line width=0.08]  [draw opacity=0] (8.93,-4.29) -- (0,0) -- (8.93,4.29) -- cycle    ;
\draw    (80,229.61) -- (80,260.58) -- (568.62,259.73) -- (568.62,234.07) ;
\draw [shift={(568.62,231.07)}, rotate = 450] [fill={rgb, 255:red, 0; green, 0; blue, 0 }  ][line width=0.08]  [draw opacity=0] (8.93,-4.29) -- (0,0) -- (8.93,4.29) -- cycle    ;
\draw    (61,232.61) -- (61,272.19) -- (61,272.19) -- (560,273.16) -- (560.29,233.56) ;
\draw [shift={(61,229.61)}, rotate = 90] [fill={rgb, 255:red, 0; green, 0; blue, 0 }  ][line width=0.08]  [draw opacity=0] (8.93,-4.29) -- (0,0) -- (8.93,4.29) -- cycle    ;
\draw    (290.05,219.85) -- (290.05,219.85) -- (351.42,235.32) ;
\draw [shift={(354.33,236.05)}, rotate = 194.14] [fill={rgb, 255:red, 0; green, 0; blue, 0 }  ][line width=0.08]  [draw opacity=0] (8.93,-4.29) -- (0,0) -- (8.93,4.29) -- cycle    ;
\draw    (431.71,218.61) -- (431.71,218.61) -- (493.09,234.07) ;
\draw [shift={(496,234.81)}, rotate = 194.14] [fill={rgb, 255:red, 0; green, 0; blue, 0 }  ][line width=0.08]  [draw opacity=0] (8.93,-4.29) -- (0,0) -- (8.93,4.29) -- cycle    ;
\draw    (581.71,218.96) -- (581.71,218.96) -- (643.09,234.43) ;
\draw [shift={(646,235.16)}, rotate = 194.14] [fill={rgb, 255:red, 0; green, 0; blue, 0 }  ][line width=0.08]  [draw opacity=0] (8.93,-4.29) -- (0,0) -- (8.93,4.29) -- cycle    ;
\draw    (12,208.32) -- (46.57,208.61) ;
\draw [shift={(49.57,208.64)}, rotate = 180.48] [fill={rgb, 255:red, 0; green, 0; blue, 0 }  ][line width=0.08]  [draw opacity=0] (8.93,-4.29) -- (0,0) -- (8.93,4.29) -- cycle    ;
\draw    (91.05,220.82) -- (246,220.9) ;
\draw [shift={(249,220.9)}, rotate = 180.03] [fill={rgb, 255:red, 0; green, 0; blue, 0 }  ][line width=0.08]  [draw opacity=0] (8.93,-4.29) -- (0,0) -- (8.93,4.29) -- cycle    ;
\draw   (49.57,208.64) .. controls (49.57,221.03) and (59.52,231.07) .. (71.79,231.07) .. controls (84.05,231.07) and (94,221.03) .. (94,208.64) .. controls (94,196.25) and (84.05,186.21) .. (71.79,186.21) .. controls (59.52,186.21) and (49.57,196.25) .. (49.57,208.64) -- cycle ;
\draw    (91.05,220.82) -- (91.05,220.82) -- (152.42,236.29) ;
\draw [shift={(155.33,237.02)}, rotate = 194.14] [fill={rgb, 255:red, 0; green, 0; blue, 0 }  ][line width=0.08]  [draw opacity=0] (8.93,-4.29) -- (0,0) -- (8.93,4.29) -- cycle    ;
\draw [color={rgb, 255:red, 208; green, 2; blue, 27 }  ,draw opacity=1 ]   (91.05,199.53) -- (246,199.61) ;
\draw [shift={(249,199.62)}, rotate = 180.03] [fill={rgb, 255:red, 208; green, 2; blue, 27 }  ,fill opacity=1 ][line width=0.08]  [draw opacity=0] (8.93,-4.29) -- (0,0) -- (8.93,4.29) -- cycle    ;
\draw [color={rgb, 255:red, 208; green, 2; blue, 27 }  ,draw opacity=1 ] [dash pattern={on 4.5pt off 4.5pt}]  (91.05,99.37) -- (269.81,188.7) ;
\draw [color={rgb, 255:red, 208; green, 2; blue, 27 }  ,draw opacity=1 ] [dash pattern={on 4.5pt off 4.5pt}]  (91.05,199.53) -- (269.81,110.57) ;
\draw [color={rgb, 255:red, 208; green, 2; blue, 27 }  ,draw opacity=1 ] [dash pattern={on 4.5pt off 4.5pt}]  (91.05,99.37) -- (410.29,188.7) ;
\draw [color={rgb, 255:red, 208; green, 2; blue, 27 }  ,draw opacity=1 ] [dash pattern={on 4.5pt off 4.5pt}]  (91.05,199.53) -- (410.29,110.57) ;

\draw (21,73.4) node [anchor=north west][inner sep=0.75pt]  [font=\scriptsize]  {$\mu _{i}$};
\draw (103.6,58.01) node [anchor=north west][inner sep=0.75pt]  [font=\scriptsize,rotate=-347.28]  {${\textstyle \mathnormal{\mu _{i}} S_{i}}$};
\draw (151,59.69) node [anchor=north west][inner sep=0.75pt]  [font=\scriptsize]  {$\left( \beta _{ii}^{A} A_{i} \ +\ \beta _{ii}^{I} I_{i}\right) S_{i}$};
\draw (300.78,60.43) node [anchor=north west][inner sep=0.75pt]  [font=\scriptsize,rotate=-347.28]  {${\textstyle \mathnormal{\mu _{i}} A_{i}}$};
\draw (446.23,60.68) node [anchor=north west][inner sep=0.75pt]  [font=\scriptsize,rotate=-347.28]  {${\textstyle \mathnormal{\mu _{i}} I_{i}}$};
\draw (593.02,61.49) node [anchor=north west][inner sep=0.75pt]  [font=\scriptsize,rotate=-347.28]  {${\textstyle \mathnormal{\mu _{i}} R_{i}}$};
\draw (331.76,89.72) node [anchor=north west][inner sep=0.75pt]  [font=\scriptsize]  {$\alpha $};
\draw (406.19,124.66) node [anchor=north west][inner sep=0.75pt]  [font=\scriptsize]  {$\delta _{A}$};
\draw (477.29,90.87) node [anchor=north west][inner sep=0.75pt]  [font=\scriptsize]  {$\delta _{I}$};
\draw (152,100.69) node [anchor=north west][inner sep=0.75pt]  [font=\scriptsize,color={rgb, 255:red, 208; green, 2; blue, 27 }  ,opacity=1 ]  {$\left( \beta _{ij}^{A} A_{j} \ +\ \beta _{ij}^{I} I_{j}\right) S_{i}$};
\draw (296,10.4) node [anchor=north west][inner sep=0.75pt]  [font=\scriptsize]  {$\gamma $};
\draw (278,37.4) node [anchor=north west][inner sep=0.75pt]  [font=\scriptsize]  {$\nu _{i}$};
\draw (64,80.4) node [anchor=north west][inner sep=0.75pt]    {$S_{i}$};
\draw (260,79.4) node [anchor=north west][inner sep=0.75pt]    {$A_{i}$};
\draw (405,79.4) node [anchor=north west][inner sep=0.75pt]    {$I_{i}$};
\draw (553,79.4) node [anchor=north west][inner sep=0.75pt]    {$R_{i}$};
\draw (19,192.4) node [anchor=north west][inner sep=0.75pt]  [font=\scriptsize]  {$\mu _{j}$};
\draw (64,199.4) node [anchor=north west][inner sep=0.75pt]    {$S_{j}$};
\draw (259,199.4) node [anchor=north west][inner sep=0.75pt]    {$A_{j}$};
\draw (405,200.4) node [anchor=north west][inner sep=0.75pt]    {$I_{j}$};
\draw (552,199.4) node [anchor=north west][inner sep=0.75pt]    {$R_{j}$};
\draw (106.92,226.07) node [anchor=north west][inner sep=0.75pt]  [font=\scriptsize,rotate=-13.29]  {${\textstyle \mathnormal{\mu _{j}} S_{j}}$};
\draw (298.92,224.07) node [anchor=north west][inner sep=0.75pt]  [font=\scriptsize,rotate=-13.29]  {${\textstyle \mathnormal{\mu _{j}} A_{j}}$};
\draw (441.92,223.07) node [anchor=north west][inner sep=0.75pt]  [font=\scriptsize,rotate=-13.29]  {${\textstyle \mathnormal{\mu _{j}} I_{j}}$};
\draw (591.92,224.07) node [anchor=north west][inner sep=0.75pt]  [font=\scriptsize,rotate=-13.29]  {${\textstyle \mathnormal{\mu _{j}} R_{j}}$};
\draw (329.76,196.72) node [anchor=north west][inner sep=0.75pt]  [font=\scriptsize]  {$\alpha $};
\draw (405.19,163.66) node [anchor=north west][inner sep=0.75pt]  [font=\scriptsize]  {$\delta _{A}$};
\draw (481.29,197.87) node [anchor=north west][inner sep=0.75pt]  [font=\scriptsize]  {$\delta _{I}$};
\draw (303,274.4) node [anchor=north west][inner sep=0.75pt]  [font=\scriptsize]  {$\gamma $};
\draw (281,246.4) node [anchor=north west][inner sep=0.75pt]  [font=\scriptsize]  {$\nu _{i}$};
\draw (152,222.69) node [anchor=north west][inner sep=0.75pt]  [font=\scriptsize]  {$\left( \beta _{jj}^{A} A_{j} \ +\ \beta _{jj}^{I} I_{j}\right) S_{j}$};
\draw (158,179.69) node [anchor=north west][inner sep=0.75pt]  [font=\scriptsize,color={rgb, 255:red, 208; green, 2; blue, 27 }  ,opacity=1 ]  {$\left( \beta _{ji}^{A} A_{i} \ +\ \beta _{ji}^{I} I_{i}\right) S_{j}$};
\end{tikzpicture}
\caption{Flow diagram for system (\ref{sairs_net}), depicting the interaction between communities $i$ and $j$, as well as their internal dynamics. The solid lines represent internal dynamics within each group, whereas the dashed lines represent the inter-group influence of infected individuals.}
\label{fig:SAIRS}
\end{figure}
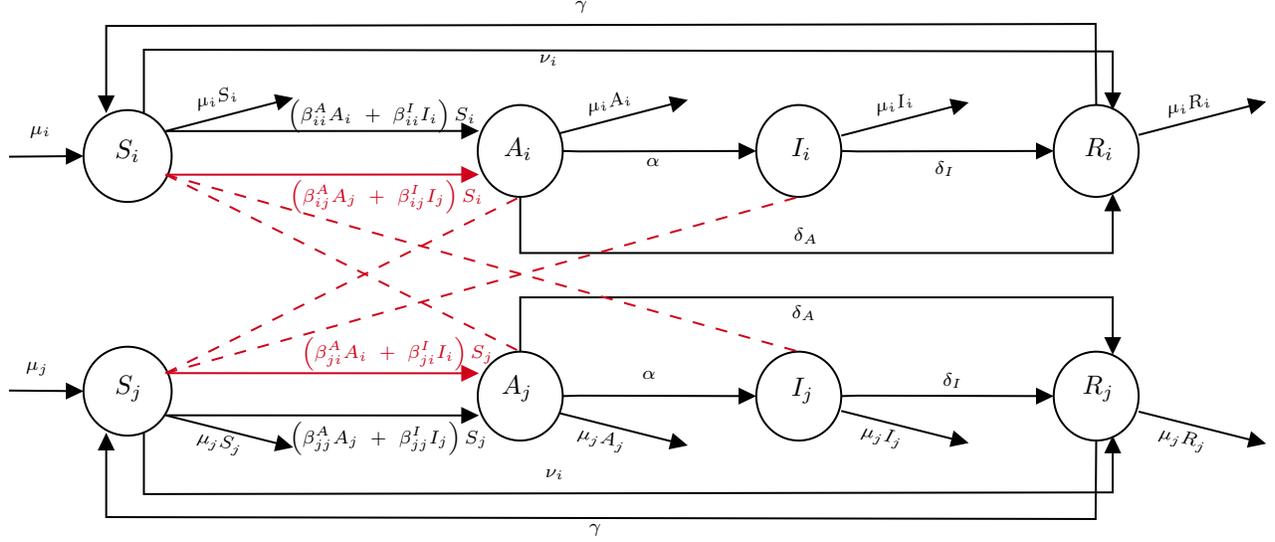\\

Assuming initial conditions in $\bar \Gamma$,  $S_i(t)+A_i(t)+I_i(t)+R_i(t)=1$, for all $t\ge 0$ and $i=1,\dots,n$; hence, system \eqref{sairs_net} is equivalent to the following $3n$-dimensional  dynamical system: 
\begin{equation}\label{sairs3_net}
\begin{split}
    \frac{d S_i(t)}{dt} &= \mu_i -\sum_{j=1}^n \bigg(\beta^{A}_{ij} A_j(t) + \beta^{I}_{ij} I_j(t)\bigg)S_i(t) -(\mu_i + \nu_i +\gamma) S_i(t) + \gamma(1-A_i(t)-I_i(t)), \\ 
      \frac{d A_i(t)}{dt} &=\sum_{j=1}^n \bigg(\beta^{A}_{ij} A_j(t) + \beta^{I}_{ij} I_j(t)\bigg)S_i(t)-(\alpha + \delta_A +\mu_i) A_i(t),  \\
     \frac{d I_i(t)}{dt} &= \alpha A_i(t) - (\delta_I + \mu_i)I_i(t), \qquad \qquad i=1,\dots,n,
\end{split}
\end{equation}
with initial condition  $(S_1(0), A_1(0), I_1(0),\dots,S_n(0), A_n(0), I_n(0))$  belonging to the set 
$$\Gamma=\{ (S_1, A_1, I_1,\dots,S_n,A_n,I_n) \in \mathbb R_+^{3n}| S_i+ A_i+ I_i \leq 1, i=1,\dots,n\}.$$
System \eqref{sairs3_net} can be written in vector notation as

\begin{equation}\label{vectf}
    \frac{dx(t)}{dt}=f(x(t)),
\end{equation}
where $x(t) = (S_1(t), A_1(t), I_1(t),\dots,S_n(t),A_n(t),I_n(t))$ and $f(x(t)) = (f_1(x(t)), f_2(x(t)), \dots, f_{3n}(x(t)))$ is defined according to \eqref{sairs3_net}.\\

\newpage
We make the following assumptions:
\begin{assumption}\label{assump} \mbox{}
\begin{itemize}
    \item{ The matrices $[\beta_{ij}^A]_{i,j=1,\dots,n}$ and $[\beta_{ij}^I]_{i,j=1,\dots,n}$ are irreducible}. This means that every pair of communities is connected by a path.
    \item $\beta^A_{ii} \neq 0$, $\beta^I_{ii} \neq 0, i=1, \ldots, n$. This means that infection can spread within each community.
\end{itemize}
\end{assumption}

\begin{theorem}\label{invset}
  $\Gamma$ is positively invariant for system \eqref{sairs3_net}. That is, for all initial values $x(0) \in \Gamma$, the solution $x(t)$ 
  of \eqref{sairs3_net} will remain in $\Gamma$ for all $t >0$.
 \end{theorem}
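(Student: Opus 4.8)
The plan is to apply the standard sub-tangentiality (Nagumo-type) criterion for invariance of a closed convex set. Since the right-hand side $f$ of \eqref{sairs3_net} is polynomial, it is locally Lipschitz, so local existence and uniqueness of solutions hold; hence it suffices to show that $f$ does not point strictly outward along $\partial\Gamma$, and then that the (a priori bounded) solution extends to all $t>0$. The boundary of the polytope $\Gamma$ is covered by the $3n$ coordinate faces $\{S_i=0\}$, $\{A_i=0\}$, $\{I_i=0\}$ and the $n$ faces $\{S_i+A_i+I_i=1\}$, $i=1,\dots,n$, and I would treat these two families in turn, using at lower-dimensional intersections of faces simply the conjunction of the conditions verified on each face.

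First I would establish that $\mathbb{R}_+^{3n}$ is positively invariant by checking quasi-positivity of $f$: on $\{S_i=0\}$ (with the remaining coordinates in $\Gamma$) one has $\dot S_i=\mu_i+\gamma(1-A_i-I_i)\ge\mu_i>0$, because $A_i+I_i\le1$ on $\Gamma$; on $\{A_i=0\}$ one has $\dot A_i=\sum_{j=1}^n(\beta^A_{ij}A_j+\beta^I_{ij}I_j)S_i\ge0$, being a sum of products of non-negative quantities; and on $\{I_i=0\}$ one has $\dot I_i=\alpha A_i\ge0$. Thus no trajectory starting in $\Gamma$ can cross a coordinate hyperplane.

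Next I would control the constraints $N_i:=S_i+A_i+I_i\le1$. Summing the three equations of \eqref{sairs3_net} for a fixed $i$, the incidence terms cancel between $\dot S_i$ and $\dot A_i$, and collecting the $(\mu_i+\gamma)$-contributions gives
\[
\dot N_i=\mu_i+\gamma-(\mu_i+\gamma)N_i-\nu_i S_i-\delta_A A_i-\delta_I I_i .
\]
On the face $\{N_i=1\}$ this reduces to $\dot N_i=-\nu_i S_i-\delta_A A_i-\delta_I I_i\le0$, so the flow cannot push $N_i$ above $1$. Combining the two families, $f$ is sub-tangential to $\Gamma$ everywhere on $\partial\Gamma$; hence the solution remains in $\Gamma$ throughout its maximal interval of existence, and since $\Gamma$ is compact the solution is defined for all $t>0$. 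I do not anticipate a real obstacle: the only points needing slight care are making the cancellation in the derivation of $\dot N_i$ explicit, and noting that the bound $A_i+I_i\le1$ used on $\{S_i=0\}$ is legitimate precisely because we work on $\partial\Gamma$ rather than on a bare coordinate hyperplane of $\mathbb{R}^{3n}$.
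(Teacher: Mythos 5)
Your proposal is correct and follows essentially the same route as the paper: both verify sub-tangentiality of the vector field on the four families of faces $\{S_i=0\}$, $\{A_i=0\}$, $\{I_i=0\}$, $\{S_i+A_i+I_i=1\}$, and your computation $\dot N_i=\mu_i+\gamma-(\mu_i+\gamma)N_i-\nu_i S_i-\delta_A A_i-\delta_I I_i$ reproduces exactly the paper's Case~4 inner product $-\nu_i S_i-\delta_A A_i-\delta_I I_i\le 0$. The only cosmetic difference is that the paper phrases the face conditions as inner products with outward normal vectors, while you phrase them as signs of $\dot S_i$, $\dot A_i$, $\dot I_i$, $\dot N_i$ on the respective faces.
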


\begin{proof}
Let us consider the boundary $\partial \Gamma$, as in \cite[Th. 1]{ottaviano2022global}. It consists of the following hyperplanes:
\begin{align*}
H_{1,i}&=\{(S_1,A_1,I_1,\dots,S_n,A_n,I_n) \in \Gamma \; | \; S_i =0 \}, \\ H_{2,i}&=\{(S_1,A_1,I_1,\dots,S_n,A_n,I_n) \in \Gamma \; | \; A_i=0 \},  \\
H_{3,i} &=\{(S_1,A_1,I_1,\dots,S_n,A_n,I_n) \in \Gamma \; |  \; I_i=0 \},  \\
H_{4,i}&=\{(S_1,A_1,I_1,\dots,S_n,A_n,I_n) \in \Gamma \; | \; S_i+A_i+I_i =1 \}, \qquad \quad  i=1,\dots,n.
\end{align*}
Let us consider $H_{k,1}$, $k=1,2,3,4$.
The outward normal vectors of $H_{1,1}$, $H_{2,1}$, $H_{3,1}$, and $H_{4,1}$ are, respectively 
\begin{align*}
    &\eta_{1,1} = (-1,0,0,\dots,0,0,0),\qquad
    \eta_{2,1} = (0,-1,0,\dots,0,0,0), \\
    &\eta_{3,1} = (0,0,-1,\dots,0,0,0), \qquad
    \eta_{4,1} = (1,1,1,\dots,0,0,0).
\end{align*}
Let 
$x\in H_{k,1}$, $k=1,\dots,4$, and consider the following cases:\\

\emph{Case 1:}  $S_1 = 0$. Then, since $A_1+I_1 \leq 1$, 
 $$\langle f(x), \eta_{1,1}\rangle = -\mu_1 -\gamma(1-A_1-I_1)\leq 0.$$ 

\emph{Case 2:}  $A_1 = 0$. Then, since $S_1 \ge 0$, $A_i \ge 0$, $I_i \ge 0$, $i=2,\dots,n$
 $$\langle f(x), \eta_{2,1}\rangle = - \underbrace{\bigg(\sum_{j=2}^n \beta^{A}_{ij} A_j + \sum_{j=1}^n\beta^{I}_{ij} I_j \bigg)}_{\ge 0} S_1\le 0. $$ 

\emph{Case 3:} $I_1 = 0$. Then, since $A_1 \ge 0$
$$ \langle f(x), \eta_{3,1} \rangle =  -\alpha A_1 \le 0 .$$

\emph{Case 4:} $S_1 + A_1 + I_1 = 1$. Then, since $S_1 \ge 0$, $A_1 \ge 0$, $I_1 \ge 0$
$$ \langle f(x), \eta_{4,1} \rangle = -\nu_1 S_1 -\delta_A A_1 -\delta_I I_1 \leq 0.
$$

The proof for the hyperplanes $H_{k,i}$, $k=1,\dots,4$ and $i=2,\dots,n$ is analogous. 
\end{proof}

\section{Disease Elimination}\label{disease_elimination}

System \eqref{sairs3_net} always admits a disease-free equilibrium, whose expression is:
\begin{equation*}
x_0 = \left(S_{0,1},A_{0,1},I_{0,1},\dots,S_{0,n},A_{0,n}, I_{0,n}\right),
\end{equation*}
where
\begin{equation}\label{DFE_net}
S_{0,i} = \dfrac{\gamma +\mu_i}{\gamma +\mu_i  + \nu_i}, \qquad A_{0,i} = I_{0,i} = 0, \quad\qquad i=1,\dots,n.
\end{equation}
Note that, in general, $S_{0,i} \neq S_{0,j}$ if $i\neq j$.

\begin{lemma}\label{propR0}
Consider the matrix
$$
M_1 = \left(\left(\beta_{ij}^A + \dfrac{\alpha\beta_{ij}^I}{\delta_I + \mu_i}\right)\dfrac{S_{0,i}}{\alpha+\delta_A+\mu_i} \right)_{i,j=1,\dots,n} .
$$
The basic reproduction number $\mathcal{R}_0$ of \eqref{sairs3_net} is 
\begin{equation}\label{R0_net}
    \cR_0 = \rho(M_1) = \rho\left( \left(\left(\beta^A_{ij} + \dfrac{\alpha \beta^{I}_{ij}}{(\delta_I + \mu_i)}\right) \dfrac{\gamma +\mu_i}{(\gamma +\mu_i + \nu_i)(\alpha + \delta_A + \mu_i)} \right)_{i,j=1,\dots,n}  \right),
\end{equation}
where $\rho(M_1)$ is the spectral radius of the matrix $M_1$.
\end{lemma}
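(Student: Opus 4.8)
The plan is to compute $\mathcal{R}_0$ via the next-generation matrix construction. The infected compartments are $A_i,I_i$ for $i=1,\dots,n$; order them as $\mathbf z=(A_1,\dots,A_n,I_1,\dots,I_n)$ and write the associated $2n$ equations of \eqref{sairs3_net} as $\dot{\mathbf z}=\mathcal F(x)-\mathcal V(x)$, where $\mathcal F$ collects the new-infection terms $\big(\sum_{j}(\beta^A_{ij}A_j+\beta^I_{ij}I_j)\big)S_i$ in the $A_i$-coordinates (and $0$ in the $I_i$-coordinates), and $\mathcal V$ collects the remaining transfers: $(\alpha+\delta_A+\mu_i)A_i$ in the $A_i$-coordinate and $(\delta_I+\mu_i)I_i-\alpha A_i$ in the $I_i$-coordinate. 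First I would check that this splitting satisfies the structural hypotheses of the method on the invariant set $\Gamma$ — $\mathcal F$ and the gain/loss parts of $\mathcal V$ are nonnegative, $\mathcal F$ vanishes on the disease-free faces, and, using Theorem~\ref{invset} and \eqref{DFE_net}, $x_0$ is locally asymptotically stable for the flow obtained by deleting $\mathcal F$ — which legitimises setting $\mathcal{R}_0=\rho(FV^{-1})$ with $F=D_{\mathbf z}\mathcal F(x_0)$ and $V=D_{\mathbf z}\mathcal V(x_0)$.

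Next I would evaluate the two Jacobians at $x_0$. With $D_S=\diag(S_{0,1},\dots,S_{0,n})$, $B^A=(\beta^A_{ij})$, $B^I=(\beta^I_{ij})$, $D_1=\diag(\alpha+\delta_A+\mu_i)$ and $D_2=\diag(\delta_I+\mu_i)$, one obtains
\[
F=\begin{pmatrix} D_SB^A & D_SB^I\\ 0 & 0\end{pmatrix},\qquad
V=\begin{pmatrix} D_1 & 0\\ -\alpha\,\mathrm{Id}_n & D_2\end{pmatrix},
\]
and, since $V$ is block lower triangular, $V^{-1}=\begin{pmatrix} D_1^{-1}&0\\ \alpha D_2^{-1}D_1^{-1}&D_2^{-1}\end{pmatrix}$. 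The product $FV^{-1}$ is then block upper triangular with vanishing $(2,2)$-block, so $\mathcal{R}_0=\rho(FV^{-1})$ equals the spectral radius of its $n\times n$ $(1,1)$-block
\[
K=D_SB^AD_1^{-1}+\alpha D_SB^ID_2^{-1}D_1^{-1}=D_S\big(B^A+\alpha B^ID_2^{-1}\big)D_1^{-1}.
\]
By the cyclic invariance of the spectral radius, $\rho(K)=\rho\big(D_1^{-1}D_S(B^A+\alpha B^ID_2^{-1})\big)$; writing this matrix out in scalar form and substituting $S_{0,i}=(\gamma+\mu_i)/(\gamma+\mu_i+\nu_i)$ from \eqref{DFE_net} yields $\mathcal{R}_0=\rho(M_1)$ and the explicit expression \eqref{R0_net}.

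I expect the work to concentrate in two places. The first is the verification of the next-generation hypotheses on $\Gamma$ — in particular the local asymptotic stability of $x_0$ for the disease-free subsystem, which rests on the positivity of the $S_{0,i}$ and on $V$ being a nonsingular M-matrix. The second is the passage from the $2n$-dimensional matrix $FV^{-1}$ to the $n$-dimensional $M_1$: the block reduction above is the efficient route, but one must be careful, when expanding $K$ into scalar entries, about which community's removal rates $\alpha+\delta_A+\mu_\bullet$ and $\delta_I+\mu_\bullet$ multiply each $\beta_{ij}$ — this being the bookkeeping point most easily mishandled when lifting the single-group reproduction number to the network setting. Finally, Assumption~\ref{assump} makes $M_1$ an irreducible nonnegative matrix, so by Perron–Frobenius $\mathcal{R}_0$ is a simple eigenvalue with a strictly positive eigenvector; this is not needed for the present statement but is worth recording for the stability analysis that follows.
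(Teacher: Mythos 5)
Your proposal follows exactly the same route as the paper: the van den Driessche--Watmough next-generation construction with the same splitting into $\mathcal F$ and $\mathcal V$, the same block matrices $F$ and $V$, the same block-triangular inversion of $V$, and the reduction of $\rho(FV^{-1})$ to the spectral radius of its $(1,1)$ block. Up to and including the formula $K=D_S\bigl(B^A+\alpha B^ID_2^{-1}\bigr)D_1^{-1}$ your computation is correct, and is in fact carried out more explicitly than the paper's, which passes from $F$ and $V^{-1}$ to the entries of \eqref{NGM} ``by direct calculation''.

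The one step that does not go through as written is the final identification of $K$ with the printed $M_1$ --- precisely the bookkeeping point you yourself flagged as the one most easily mishandled. Right-multiplication by a diagonal matrix scales columns, so the scalar entries of your (correct) $K$ are
\[
K_{ij}=\left(\beta^A_{ij}+\frac{\alpha\beta^I_{ij}}{\delta_I+\mu_j}\right)\frac{S_{0,i}}{\alpha+\delta_A+\mu_j},
\]
with the removal rates of the \emph{source} community $j$ in the denominators (epidemiologically, the sojourn times of the infectious individual, who lives in community $j$), whereas $M_1$ as stated in the lemma carries $\mu_i$ throughout. The identity $\rho(XY)=\rho(YX)$ lets you move the outer factor $D_1^{-1}$ from the right to the left, but it cannot also transport $D_2^{-1}$ across $B^I$: since $B^ID_2^{-1}\neq D_2^{-1}B^I$ and the two summands of $K$ must be conjugated simultaneously, the matrices $D_S\bigl(B^A+\alpha B^ID_2^{-1}\bigr)D_1^{-1}$ and $D_1^{-1}D_S\bigl(B^A+\alpha D_2^{-1}B^I\bigr)$ need not have the same spectral radius; a $2\times 2$ computation with $\mu_1\neq\mu_2$ (e.g.\ $\gamma=\nu_i=\delta_A=\delta_I=0$, $\alpha=1$, $\mu=\diag(1,2)$ and generic positive $\beta$'s) already separates them. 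So either you should state the conclusion with $\mu_j$ in place of $\mu_i$ --- in which case your argument is complete --- or you must supply a reason why the two spectral radii coincide, and none exists in general. To be fair, the paper's own display \eqref{NGM} commits the same index slip when multiplying $\tilde B^A$ and $\tilde B^I$ on the right by the diagonal blocks of $V^{-1}$, and the discrepancy vanishes in the homogeneous case $\mu_1=\dots=\mu_n$ used in all of the paper's simulations; but as a derivation of \eqref{R0_net} exactly as printed, the last line of your argument is where the gap sits.
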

\begin{proof}
We shall use the next generation matrix method \cite{van2002repnum} to find $\mathcal{R}_0$. System \eqref{sairs3_net} has $2n$ disease compartments, namely $A_i$ and $I_i$, $i=1,\dots,n$. 
Rearranging the order of the equations such that the disease compartments can be written as $x= (A_1,\dots,A_n,I_1,\dots,I_n)^T$, we can rewrite the corresponding ODEs as
\begin{align*}
\frac{d A_i(t)}{dt} &=  \mathcal{F}_{1_i}(S_i(t),A_i(t),I_i(t)) - \mathcal{V}_{1,i}(S_i(t),A_i(t),I_i(t)) , \\ \nonumber
     \frac{d I_i(t)}{dt} &= \mathcal{F}_{2,i}(S_i(t),A_i(t),I_i(t)) - \mathcal{V}_{2,i}(S_i(t),A_i(t),I_i(t)), \nonumber
\end{align*}
where 
\begin{align*}
&\mathcal{F}_{1,i} = \sum_{i=1}^n \bigg(\beta^A_{ij} A_j(t) + \beta^I_{ij} I_j(t)\bigg)S_i(t), &  \mathcal{V}_{1,i}= (\alpha + \delta_A +\mu_i) A_i(t),\\ \nonumber 
&\mathcal{F}_{2,i}= 0,   &\mathcal{V}_{2,i}= -\alpha A_i(t) + (\delta_I +\mu_i) I_i(t).
\end{align*}
Thus, we obtain
\begin{align}
F  = \left( \begin{matrix} 
\left(\dfrac{\partial \mathcal{F}_{1,i}}{\partial A_j}(x_0)\right)_{i,j=1,\dots,n} & \left(\dfrac{\partial \mathcal{F}_{1,i}}{\partial I_j}(x_0)\right)_{i,j=1,\dots,n} \\ \\
\left(\dfrac{\partial \mathcal{F}_{2,i}}{\partial A_j}(x_0) \right)_{i,j=1,\dots,n} & \left(\dfrac{\partial \mathcal{F}_{2,i}}{\partial I_j}(x_0)\right)_{i,j=1,\dots,n} 
\end{matrix} \right), 
\label{mat_F} \\
V = \left( \begin{matrix}
\left(\dfrac{\partial \mathcal{V}_{1,i}}{\partial A_j}(x_0)\right)_{i,j=1,\dots,n}  & \left(\dfrac{\partial \mathcal{V}_{1,i}}{\partial I_j}(x_0)\right)_{i,j=1,\dots,n} \\ \\
\left(\dfrac{\partial \mathcal{V}_{2,i}}{\partial A_j}(x_0)\right)_{i,j=1,\dots,n}  & \left(\dfrac{\partial \mathcal{V}_{2,i}}{\partial I_j}(x_0)\right)_{i,j=1,\dots,n} 
\end{matrix} \right), \label{mat_V}
\end{align}
which can be written in matrix notation
\begin{align*}
    F = \left(\begin{matrix}
    \Tilde{B}^A & \Tilde{B}^I \\ 0 & 0 
    \end{matrix}\right) \qquad \text{and} \qquad  V =  \left(\begin{matrix}
    (\alpha + \delta_A)\bI + \mu & 0 \\ -\alpha \bI & \delta_I \bI+ \mu 
    \end{matrix}\right),
\end{align*}
where $ (\Tilde{B}^A)_{ij} = \beta_{ij}^A S_{0,i}$, $ (\Tilde{B}^I)_{ij} = \beta_{ij}^I S_{0,i}$, $\mu = \diag{(\mu_1,\dots,\mu_n)}$, and $0$ and $\bI$ are the zero matrix and the identity matrix of order $n$, respectively.
Since $V$ is a block lower triangular matrix, its inverse is the $2n \times 2n$ block matrix:
\begin{equation*}
    V^{-1} = \left(\begin{matrix}
       \diag{\left(\dfrac{1}{\alpha+\delta_A+\mu_i}\right)_{i=1,\dots,n}} & 0 \\
       \diag{\left(\dfrac{\alpha}{(\alpha+\delta_A+\mu_i)(\delta_I+\mu_i)}\right)_{i=1,\dots,n}} & \diag{\left(\dfrac{1}{\delta_I + \mu_i}\right)_{i=1,\dots,n}}
    \end{matrix} \right).
\end{equation*}

The next generation matrix is defined as $M:=FV^{-1}$. By direct calculation, we obtain

\begin{equation}\label{NGM}
    M = \left(
    \begin{matrix}
  \left( \left( \dfrac{\beta^A_{ij}}{\alpha + \delta_A + \mu_i} + \dfrac{\alpha \beta^{I}_{ij}}{(\alpha+\delta_A + \mu_i)(\delta_I + \mu_i)}\right) S_{0,i}\right)_{i,j=1,\dots,n} & \left(\dfrac{\beta^I_{ij}S_{0,i}}{\delta_I + \mu_i}\right)_{i,j=1,\dots,n} \\ 0 & 0 
   \end{matrix}
    \right).
\end{equation}
The basic reproduction number $\cR_0$ is defined as the spectral radius of $M$, denoted by $\rho(M)$, that is $\rho(M) = \max\{\rho(M_1),0 \}$, where

\begin{equation}\label{M1}
M_1 = \left(\left(\beta_{ij}^A + \dfrac{\alpha\beta_{ij}^I}{\delta_I + \mu_i}\right)\dfrac{S_{0,i}}{\alpha+\delta_A+\mu_i} \right)_{i,j=1,\dots,n}.
\end{equation}
As a direct consequence of the Perron Frobenius theorem \cite{horn2012matrix}, $\rho(M_1)>0$. This proves our claim.
\end{proof}

In the following, we present some results to prove the global asymptotic stability of the DFE $x_0$.

Recall that a matrix $M$ is called \textit{non-negative} if each entry is non-negative; we simply write $M \ge0 $ to indicate this.
We use the following results from \cite{van2008r0}:

\begin{lemma}[{\cite[Lemma 2]{van2008r0}}]\label{hyp}
If $F$ is non-negative and $V$ is a non-singular M-matrix, then $\cR_0 = \rho(F V^{-1}) < 1$ if and only if all eigenvalues of $(F - V)$ have negative real parts.
\end{lemma}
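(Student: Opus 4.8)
Since this statement is quoted verbatim from \cite{van2008r0}, its proof can simply be invoked from there; for completeness I sketch the argument. The plan is to reduce the claim to the standard characterization of non-singular M-matrices, and then to a Neumann-series / Perron--Frobenius computation. First I would observe that, because $V$ is a non-singular M-matrix, it can be written as $V = sI - B$ with $B\ge 0$ and $s>\rho(B)$, where $I$ denotes the identity matrix of the appropriate order; since also $F\ge 0$, the matrix $V-F = sI - (B+F)$ has non-positive off-diagonal entries, i.e.\ it is a $Z$-matrix. By the classical equivalences for $Z$-matrices (Berman--Plemmons type results, see \cite{horn2012matrix}), the following are equivalent for $V-F$: (i) it is a non-singular M-matrix; (ii) it is invertible with $(V-F)^{-1}\ge 0$; (iii) every eigenvalue of $V-F$ has positive real part, equivalently every eigenvalue of $F-V$ has negative real part. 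Thus it suffices to prove that $V-F$ is a non-singular M-matrix if and only if $\rho(FV^{-1})<1$. I would also record here the two elementary facts used below: $V^{-1}\ge 0$ (as $V$ is a non-singular M-matrix), and $\rho(FV^{-1}) = \rho(V^{-1}F)$, since $FV^{-1}$ and $V^{-1}F$ have the same characteristic polynomial.

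For the ``if'' direction, assume $\rho(V^{-1}F) = \rho(FV^{-1}) < 1$. Then $I - V^{-1}F$ is invertible, the Neumann series converges, and $(I-V^{-1}F)^{-1} = \sum_{k\ge 0}(V^{-1}F)^k \ge 0$, each summand being a product of the non-negative matrices $V^{-1}$ and $F$. Writing $V-F = V(I - V^{-1}F)$ then gives $(V-F)^{-1} = (I-V^{-1}F)^{-1}V^{-1}\ge 0$, so $V-F$ is a non-singular M-matrix by (ii). For the ``only if'' direction, assume $V-F$ is a non-singular M-matrix, so $(V-F)^{-1}\ge 0$; if $\rho(FV^{-1})=0$ there is nothing to prove, so suppose $\rho := \rho(FV^{-1}) = \rho(V^{-1}F) > 0$. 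By the Perron--Frobenius theorem, $\rho$ is an eigenvalue of the non-negative matrix $V^{-1}F$ with an eigenvector $u\ge 0$, $u\ne 0$; then $Fu = \rho\, Vu$, whence $Vu = \rho^{-1}Fu \ge 0$ and $(V-F)u = (1-\rho)Vu$, that is $u = (1-\rho)\,(V-F)^{-1}(Vu)$. The vector $(V-F)^{-1}(Vu)$ is non-negative, while $u\ge 0$ and $u\ne 0$; hence the scalar $1-\rho$ cannot be $\le 0$ (it would force $u\le 0$, hence $u=0$), so $1-\rho>0$, i.e.\ $\rho<1$.

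The only genuinely non-elementary ingredient is the equivalence (i)--(iii) for $Z$-matrices, which I would quote from the matrix-analysis literature rather than reprove. The remaining ``obstacle'' is purely bookkeeping: making sure the degenerate case $\rho(FV^{-1})=0$ is handled separately, and justifying the interchange between $FV^{-1}$ and $V^{-1}F$ (needed so that Perron--Frobenius can be applied on the side where $V^{-1}\ge 0$ is directly available). No estimate or construction beyond these standard facts is required.
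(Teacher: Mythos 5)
Your proposal is correct, and the paper itself offers no proof of this lemma --- it is stated purely as a citation of \cite[Lemma 2]{van2008r0}, exactly as you note at the outset. Your sketch is a faithful reconstruction of the standard M-matrix argument used in that reference (the factorization $V-F=V(I-V^{-1}F)$, the Neumann series for the ``if'' direction, and Perron--Frobenius applied to the non-negative matrix $V^{-1}F$ for the ``only if'' direction), so there is nothing to flag.
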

Note that the matrices $F$ and $V$ defined in Lemma 
\ref{propR0} satisfy the hypotheses of Lemma \ref{hyp}, thus the following result holds:
\begin{theorem}\label{loc_stability}
 The disease-free equilibrium of \eqref{sairs3_net} is locally asymptotically stable if $\cR_0 < 1$ and unstable if $\cR_0 > 1$.
\end{theorem}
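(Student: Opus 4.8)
The plan is to derive Theorem~\ref{loc_stability} directly from Lemma~\ref{hyp} together with the standard linearization criterion for local asymptotic stability of an equilibrium of an ODE system. First I would observe that the Jacobian of the reduced system \eqref{sairs3_net} at the disease-free equilibrium $x_0$ has a block structure reflecting the ordering of variables: the $S$-coordinates decouple from the disease coordinates $(A,I)$ at $x_0$ in the sense that the $A$- and $I$-equations linearized at $x_0$ do not involve the $S$-perturbations (since $A_{0,i}=I_{0,i}=0$, the term $\sum_j(\beta^A_{ij}A_j+\beta^I_{ij}I_j)S_i$ has zero partial derivative with respect to $S_i$ at $x_0$). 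Hence, after reordering, $Df(x_0)$ is block lower (or upper) triangular, with one diagonal block being $F-V$ acting on the disease compartments $(A_1,\dots,A_n,I_1,\dots,I_n)$, and the other diagonal block being the $n\times n$ matrix governing the $S$-perturbations, namely $-\operatorname{diag}(\mu_i+\nu_i+\gamma)$ (the $S_i$ equation linearized gives $\dot{\xi}_i = -(\mu_i+\nu_i+\gamma)\xi_i + (\text{terms in }A,I)$).

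The key steps, in order, are: (1) compute the linearization of \eqref{sairs3_net} at $x_0$ and exhibit its block-triangular form with diagonal blocks $-\operatorname{diag}(\mu_i+\nu_i+\gamma)$ and $F-V$; (2) note that the spectrum of a block-triangular matrix is the union of the spectra of its diagonal blocks, so the eigenvalues of $Df(x_0)$ are $\{-(\mu_i+\nu_i+\gamma): i=1,\dots,n\}$ together with the eigenvalues of $F-V$; (3) observe that the first group of eigenvalues is strictly negative since $\mu_i,\gamma \ge 0$ and, under Assumption~\ref{assump}, $\nu_i$ and $\mu_i$ make $\mu_i+\nu_i+\gamma>0$ (more precisely $\mu_i>0$ suffices); (4) apply Lemma~\ref{hyp}: since $F$ is non-negative and $V$ is a non-singular M-matrix (as checked in the remark following Lemma~\ref{propR0}), $\cR_0<1$ iff all eigenvalues of $F-V$ have negative real part. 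Combining (2)--(4): if $\cR_0<1$ all eigenvalues of $Df(x_0)$ have negative real part, so $x_0$ is locally asymptotically stable by the Hartman--Grobman / linearization theorem. If $\cR_0>1$, then by Lemma~\ref{hyp} $F-V$ has an eigenvalue with positive real part, hence so does $Df(x_0)$, and $x_0$ is unstable.

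The main obstacle is not conceptual but bookkeeping: one must carefully verify the claimed block-triangular structure of the Jacobian under the chosen ordering of coordinates, in particular confirming that there is genuinely no feedback from $(A,I)$ into itself other than through $F-V$ and no feedback from $S$ into $(A,I)$ at $x_0$; the coupling $\gamma(1-A_i-I_i)$ in the $S_i$-equation contributes only off-diagonal blocks and does not affect the spectrum. A minor additional point is to note that the boundary equilibrium lies in the relative interior of the face $A=I=0$ of the invariant region $\Gamma$ from Theorem~\ref{invset}, so the standard linearized stability/instability conclusions apply without complications from the constraint $S_i+A_i+I_i\le 1$. Once the block structure is in hand, the result follows immediately from Lemma~\ref{hyp}, so I would keep the proof short, essentially citing \cite{van2002repnum, van2008r0} for the equivalence and the linearization theorem for the passage from eigenvalues to (in)stability.
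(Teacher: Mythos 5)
Your proposal is correct and follows essentially the same route as the paper, which simply invokes Theorem~1 of van den Driessche and Watmough (\cite{van2008r0}); your write-up is in effect the proof of that cited theorem specialized to system \eqref{sairs3_net}, via the block-triangular Jacobian at $x_0$ and Lemma~\ref{hyp}. The only cosmetic points are that the stability conclusion comes from the Lyapunov linearization theorem rather than Hartman--Grobman, and that the instability claim for $\cR_0>1$ uses the sign equivalence between $\cR_0-1$ and the spectral abscissa of $F-V$ (also part of the cited result) rather than the literal statement of Lemma~\ref{hyp} alone.
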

\begin{proof}
See \cite[Theorem 1]{van2008r0}.
\end{proof}


\begin{theorem}\label{DFE_global}
 The disease-free equilibrium $x_0$ of \eqref{sairs3_net} is globally asymptotically stable in $\Gamma$ if $\cR_0 < 1$.
\end{theorem}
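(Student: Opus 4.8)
The plan is to construct a Lyapunov function on $\Gamma$ that decreases along trajectories when $\mathcal{R}_0<1$, using the Perron--Frobenius eigenvector of the next-generation-type matrix to weight the contributions of the different groups. First I would invoke Lemma~\ref{hyp}: since $F\ge 0$ and $V$ is a non-singular M-matrix, $\mathcal{R}_0<1$ is equivalent to $s(F-V)<0$, where $s(\cdot)$ denotes the spectral abscissa. Because $F-V$ has the block structure $\left(\begin{smallmatrix}\tilde B^A-((\alpha+\delta_A)\bI+\mu) & \tilde B^I\\ \alpha\bI & -(\delta_I\bI+\mu)\end{smallmatrix}\right)$ and is irreducible (by Assumption~\ref{assump}) with non-negative off-diagonal entries, the Perron--Frobenius theorem for such (essentially non-negative, or Metzler) matrices guarantees a strictly positive left eigenvector $w=(w^A,w^I)>0$ associated with the eigenvalue $s(F-V)<0$, i.e. $w^{\top}(F-V) = s(F-V)\,w^{\top}$.

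Next I would use this eigenvector to define the candidate Lyapunov function
\begin{equation*}
Q(x) = \sum_{i=1}^n \left( w^A_i A_i + w^I_i I_i \right),
\end{equation*}
which is non-negative on $\Gamma$ and vanishes exactly on the disease-free face $\{A_i=I_i=0\}$. Differentiating along \eqref{sairs3_net} and using $S_i(t)\le S_{0,i}$ for $t$ large (or directly $S_i \le 1$ and then a refinement), the key inequality is that the vector field for the $(A,I)$-components is bounded above, componentwise, by $(F-V)$ acting on $(A,I)^{\top}$ once one replaces $S_i$ by $S_{0,i}$; concretely, $\dot A_i = \sum_j(\beta^A_{ij}A_j+\beta^I_{ij}I_j)S_i - (\alpha+\delta_A+\mu_i)A_i \le \sum_j(\beta^A_{ij}A_j+\beta^I_{ij}I_j)S_{0,i} - (\alpha+\delta_A+\mu_i)A_i$ because $S_i\le S_{0,i}$ must first be established. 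This last bound is itself a point worth care: on $\Gamma$ we only know $S_i\le 1$, so I would first show, from the $S_i$ equation, that $\limsup_{t\to\infty} S_i(t) \le S_{0,i}$ (the $S_i$ dynamics are dominated by the linear equation $\dot S_i \le \mu_i - (\mu_i+\nu_i)S_i + \gamma(1-S_i)$ when $A_i=I_i=0$ is used as an upper comparison, giving $S_i \to (\gamma+\mu_i)/(\gamma+\mu_i+\nu_i)=S_{0,i}$ from above), so the desired estimate holds on a forward-invariant attracting region, and then restrict attention there. Granting $S_i\le S_{0,i}$, one gets $\dot Q \le w^{\top}(F-V)(A,I)^{\top} = s(F-V)\,w^{\top}(A,I)^{\top} = s(F-V)\,Q \le 0$, with equality only when $Q=0$.

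Finally I would apply LaSalle's invariance principle: the largest invariant set contained in $\{\dot Q = 0\} = \{Q=0\} = \{A_i=I_i=0,\ i=1,\dots,n\}$ is, by substituting $A_i=I_i=0$ into \eqref{sairs3_net}, exactly the singleton $\{x_0\}$ (the $S_i$ equations then force $S_i=S_{0,i}$). Combined with Theorem~\ref{loc_stability} (local asymptotic stability for $\mathcal{R}_0<1$), this yields global asymptotic stability of $x_0$ in $\Gamma$. The main obstacle I anticipate is the two-step handling of $S_i$: one cannot use $S_i\le S_{0,i}$ globally on $\Gamma$, so the argument must either work on the attracting subregion where $S_i\le S_{0,i}$ holds (after showing it is reached) or absorb the surplus $(S_i-S_{0,i})^+$ into an error term that is shown to vanish; making this rigorous, together with verifying irreducibility of $F-V$ so that the strictly positive eigenvector exists, is where the real work lies. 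A secondary technical point is justifying that $s(F-V)<0$ transfers to a strict decrease of $Q$ bounded away from $0$ on compact subsets away from $x_0$, which is what powers the LaSalle conclusion.
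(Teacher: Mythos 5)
Your strategy is viable and genuinely different from the paper's. The paper does not build a Lyapunov function here at all: it runs a two-sided comparison (squeeze) argument. It first shows $\limsup_{t\to\infty}S_i(t)\le S_{0,i}$ from $\dot S_i\le \mu_i+\gamma-(\mu_i+\nu_i+\gamma)S_i$, so that $S_i(t)\le S_{0,i}+\eps$ for $t$ large; it then dominates the $(A,I)$-subsystem by the linear system $\dot w=(F_\eps-V_\eps)w$, where $F_\eps,V_\eps$ are the next-generation blocks evaluated at $S_{0,i}+\eps$, and uses continuity of the spectral radius to pick $\eps$ small enough that $\rho(F_\eps V_\eps^{-1})<1$, whence Lemma~\ref{hyp} gives $s(F_\eps-V_\eps)<0$ and $A_i,I_i\to 0$; finally it feeds this back into the $S_i$-equation to get $\liminf S_i\ge S_{0,i}$, and concludes with Theorem~\ref{loc_stability}. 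Your left-Perron-eigenvector Lyapunov function $Q=w^AA+w^II$ is essentially the linear functional that makes the same spectral information quantitative, and your irreducibility check for $F-V$ does go through (Assumption~\ref{assump} plus the $\alpha\bI$ and $\tilde B^I$ blocks make the $2n\times 2n$ digraph strongly connected). What the paper's route buys is that the $\eps$-shift disposes of the $S_i\le S_{0,i}$ difficulty cleanly; what your route would buy is an explicit decay rate $Q(t)\le Q(0)e^{s(F-V)t}$ on the relevant region, and it is in fact the same device the paper does use for the borderline case $\cR_0=1$ without vaccination.

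The one step you should not leave as stated is your first proposed fix for the crux: restricting attention to the region $\{S_i\le S_{0,i}\ \forall i\}$ ``after showing it is reached.'' That set is forward invariant (on it, $\dot S_i\le -\sum_j(\beta^A_{ij}A_j+\beta^I_{ij}I_j)S_{0,i}-\gamma(A_i+I_i)\le 0$ when $S_i=S_{0,i}$), but a trajectory starting with some $S_i>S_{0,i}$ only approaches it asymptotically and need never enter it in finite time, so LaSalle on that subregion does not cover all of $\Gamma$. Your second fix is the right one, and the concrete way to implement it is exactly the paper's: replace $S_{0,i}$ by $S_{0,i}+\eps$ in the eigenvector construction (equivalently, take $w$ to be the left Perron eigenvector of $F_\eps-V_\eps$), note that $s(F_\eps-V_\eps)<0$ persists for $\eps$ small by continuity of the spectrum, and run the argument for $t\ge\bar t(\eps)$. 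With that modification your proof closes.
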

\begin{proof}
Let $x(t) = (S_1(t),\dots,S_n(t),A_1(t),\dots,A_n(t),I_1(t)\dots,I_n(t))$ be the solutions of system \eqref{sairs3_net} with initial condition $x(0) \in \Gamma$, in which we have rearranged the order of the equations. In view of Theorem \ref{loc_stability}, it is sufficient to prove that for all $i=1,\dots, n$
\begin{equation*}
    \lim_{t \to \infty} S_i(t) = S_{0,i}, \qquad  \lim_{t \to \infty} A_i(t)= 0, \qquad \text{and} \qquad  \lim_{t \to \infty} I_i(t) = 0,
\end{equation*}
with $S_{0,i}$ as in \eqref{DFE_net}. From the first $n$ equations of \eqref{sairs3_net}, it follows that
\begin{equation*}
\frac{dS_i(t)}{dt} \leq \mu_i+ \gamma  -(\mu_i+\nu_i+\gamma) S_i(t), \qquad i=1,\dots,n.   
\end{equation*}
Thus, $S_{0,i}$ is a global asymptotically stable equilibrium for the comparison equation
\begin{equation*}
\frac{dz_i(t)}{dt}= \mu_i+ \gamma  -(\mu_i+\nu_i+\gamma) z_i(t), \qquad i=1,\dots,n.
\end{equation*}
Then, for any $\eps>0$, there exists $\bar t_i>0$, such that for all $t \geq \bar t_i$, it holds
\begin{equation}\label{S0eps}
    S_i(t) \leq S_{0,i}+ \eps,
\end{equation}
hence 
\begin{equation}\label{limSi}
\limsup_{t \to \infty} S_i(t) \leq S_{0,i}, \qquad i=1,\dots,n.
\end{equation}
Let $\bar t =\max\{t_1,\dots,t_n\}$, then for all $t \geq \bar t$, from \eqref{S0eps} and the remaining $2n$ equations of \eqref{sairs3_net} it follows that
\begin{align*}
    \frac{d A_i(t)}{dt} & \leq \sum_{j=1}^{n}\bigg(\beta^A_{ij} A_j(t) + \beta^I_{ij} I_i(t)\bigg)(S_{0,i} + \eps) -(\alpha + \delta_A +\mu_i) A_i(t),&  \qquad i=1,\dots,n,\\
     \frac{d I_i(t)}{dt} &= \alpha A_i(t) - (\delta_I + \mu_i)I_i(t), &\qquad i=1,\dots,n. \nonumber 
    \end{align*}
Let us now consider the comparison system
\begin{align*}
    \frac{d v_i(t)}{dt} & =\sum_{j=1}^{n}\bigg(\beta^A_{ij} v_j(t) + \beta^I_{ij} u_i(t)\bigg)(S_{0,i} + \eps) -(\alpha + \delta_A +\mu_i) v_i(t),  \\
     \frac{d u_i(t)}{dt} &= \alpha v_i(t) - (\delta_I + \mu)u_i(t), \qquad v_i( \bar t)=A_i( \bar t),\quad u_i( \bar t)=I_i( \bar t), \quad i=1,\dots,n. \nonumber 
    \end{align*}
Let $w = (v_1,\dots,v_n,u_1,\dots,u_n)^T$, then one can rewrite this system as
\begin{equation*}
    \frac{d w(t)}{dt} = (F_\eps - V_\eps) w(t),
\end{equation*}
where $F_\eps$ and $V_\eps$ are the matrices defined in \eqref{mat_F} and \eqref{mat_V}, respectively, evaluated in $x_0(\eps)$ whose components are $ S_{0,i}+\eps$ for $i=1,\dots,n$ and $0$ in the remaining $2n$ components.

Notice that we can choose $\eps > 0$ sufficient small such that $\rho(F_\eps V_\eps^{-1}) < 1$ and then, from Lemma \ref{hyp}, all the eigenvalues of matrix $(F_\eps -V_\eps)$ have negative real parts. It
follows that $\lim_{t\to \infty}w_i(t) = 0$ from any initial conditions in $\Gamma$, from which
$$ \lim_{t\to \infty} A_i(t) = 0 \qquad \text{and} \qquad \lim_{t\to \infty} I_i(t) =0.$$
Thus, for any $\eps >0$, there exists $\bar t_1> 0$
such that, for all 
$t \geq \bar t_1$, we have
$$A_i(t) < \eps \quad \text{and} \quad I_i(t) < \eps, \qquad i=1,\dots,n.$$
From that and the ﬁrst $n$ equations of system \eqref{sairs3_net}, we get that for all $i=1,\dots,n$ and for $t \geq \bar t_1$ 
$$
\frac{d S_i(t)}{dt}\ge  \mu_i -\eps \sum_{j=1}^n(\beta^A_{ij} + \beta^I_{ij})S_i(t) -(\mu_i + \nu_i +\gamma) S_i(t) + \gamma(1-2\eps).
$$
The comparison system
\begin{equation*}
\frac{dz_i(t)}{dt}=\mu_i -\eps \sum_{j=1}^n(\beta^A_{ij} + \beta^I_{ij})z_i(t) -(\mu_i + \nu_i +\gamma) z_i(t) + \gamma(1-2\eps), \qquad i=1,\dots,n,
\end{equation*}
has a globally asymptotically stable equilibrium 
$$z_0 =\left(\frac{\mu_1+\gamma(1-2\eps)}{\eps(\sum_{j=1}^n\beta^A_{1j} + \beta^I_{1j})+ (\mu_1+ \nu_1 +\gamma)},\dots,\frac{\mu_n+\gamma(1-2\eps)}{\eps(\sum_{j=1}^n\beta^A_{nj} + \beta^I_{nj})+(\mu_n+ \nu_n +\gamma)}\right).$$
Thus, we get that for any $\zeta>0$, there exists $\bar t_2> 0$ such that for all $t \geq \bar t_2$,
$$S_i(t) \geq \frac{\mu_i+\gamma(1-2\eps)}{\eps(\sum_{j=1}^n\beta^A_{ij} + \beta^I_{ij})+(\mu_i+ \nu_i +\gamma)} - \zeta, \qquad i=1,\dots,n.$$
This implies that for all $\eps >0$
$$
\liminf_{t\to\infty}S_i(t)\ge\frac{\mu_i+\gamma(1-2\eps)}{\eps(\sum_{j=1}^n\beta^A_{ij} + \beta^I_{ij})+(\mu_i+ \nu_i +\gamma)}, \qquad {i=1,\dots,n}.
$$
Letting $\eps$ go to $0$, we have $\liminf_{t\to\infty}S_i(t)\ge S_{0,i}$ for all $i=1,\dots,n$, which combined with \eqref{limSi} gives us
$$\lim_{t \to \infty} S_i(t)=S_{0,i}, \qquad i=1,\dots,n.$$

\end{proof}

\subsection{SAIRS without vaccination}\label{SAIRS_novacc} 
Let us consider the SAIRS model without vaccination, that is \eqref{sairs3_net} with 
$\nu_i =0$, $i=1, \ldots, n$. From \eqref{R0_net}, the expression of the basic-reproduction number is 
\begin{equation}
    \cR_0 = \rho \left(\left(\left(\beta_{ij}^A + \dfrac{\alpha \beta_{ij}^I}{\delta_I + \mu_i} \right)\dfrac{1}{\alpha + \delta_A + \mu_i}\right)_{i,j=1,\dots,n}   \right),
\end{equation}
and the components of the DFE \eqref{DFE_net} become $S_{0,i} = 1$, $A_{0,i}=I_{0,i} = 0 $, for all $i=1,\dots,n$.

In Theorem \ref{loc_stability} and \ref{DFE_global} we proved that the DFE is globally asymptotically stable if $R_0 < 1$ and unstable if $R_0 > 1$. In the following theorem, which describe the case when we do not have any vaccination, we are able to prove that the DFE is globally asymptotically stable also when $R_0 = 1$. 

\begin{theorem}
The disease-free  equilibrium $x_0$ is  globally  asymptotically  stable in $\Gamma$ for \eqref{sairs3_net} if $\cR_0 \le 1$.
\end{theorem}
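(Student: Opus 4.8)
By Theorem~\ref{DFE_global} the statement is already known for $\cR_0<1$, so the plan is to prove global asymptotic stability of $x_0$ \emph{at the threshold} $\cR_0=1$; recall that, since $\nu_i=0$, we have $S_{0,i}=1$ for every $i$. I would build a \emph{linear} Lyapunov function in the infected compartments out of the left Perron eigenvector of the next generation matrix, show $\dot L\le 0$ on the positively invariant compact set $\Gamma$ (Theorem~\ref{invset}), and then invoke LaSalle's invariance principle; the point that was trivial for $\cR_0<1$ and is not for $\cR_0=1$ is the analysis of the largest invariant set inside $\{\dot L=0\}$.

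Concretely, write $y=(A_1,\dots,A_n,I_1,\dots,I_n)^{T}$, let $F,V$ be the matrices from Lemma~\ref{propR0} evaluated at $x_0$, and set $B^{A}:=[\beta^{A}_{ij}]$, $B^{I}:=[\beta^{I}_{ij}]$. On $\Gamma$ we have $S_i\le S_i+A_i+I_i\le 1=S_{0,i}$, so from \eqref{sairs3_net}
\begin{equation*}
\dot A_i=\Lambda_i S_i-(\alpha+\delta_A+\mu_i)A_i\le \Lambda_i-(\alpha+\delta_A+\mu_i)A_i,\qquad \Lambda_i:=\sum_{j=1}^{n}\bigl(\beta^{A}_{ij}A_j+\beta^{I}_{ij}I_j\bigr),
\end{equation*}
together with $\dot I_i=\alpha A_i-(\delta_I+\mu_i)I_i$; hence $\dot y\le (F-V)y$ componentwise, the defect in the $i$-th $A$-component being exactly $\Lambda_i(1-S_i)\ge 0$. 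Here $F-V$ is an \emph{irreducible} Metzler matrix (irreducibility of $B^{A}$ makes the $A$-block strongly connected, and $\alpha>0$, $\beta^{I}_{ii}\neq 0$ link the $I$-block to it). Let $\omega=(\psi^{T},\phi^{T})^{T}\ge 0$ be a left eigenvector of the non-negative matrix $V^{-1}F$ for $\rho(V^{-1}F)=\rho(FV^{-1})=\cR_0$; using the block structure of $V^{-1}F$ together with $\beta^{A}_{ii},\beta^{I}_{ii}\neq 0$ and irreducibility of $B^{A}$ one checks $\psi,\phi>0$, so $\omega>0$ and the row vector $\omega^{T}V^{-1}$ is entrywise strictly positive. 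Put $L(x)=\omega^{T}V^{-1}y$. Then $L\ge 0$ on $\Gamma$ with $L=0$ iff $A=I=0$, and
\begin{equation*}
\dot L=\omega^{T}V^{-1}\dot y\le \omega^{T}V^{-1}(F-V)y=\bigl(\omega^{T}V^{-1}F-\omega^{T}\bigr)y=(\cR_0-1)\,\omega^{T}y=0;
\end{equation*}
keeping the defect gives the sharper identity $\dot L=-\sum_{i=1}^{n}c_i\,(1-S_i)\,\Lambda_i$, where $c_i>0$ is the entry of $\omega^{T}V^{-1}$ at the $A_i$-slot. Thus $L$ is a Lyapunov function on $\Gamma$ and $\{\dot L=0\}=\{(1-S_i)\Lambda_i=0,\ i=1,\dots,n\}$.

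Next I would determine, by LaSalle, the largest invariant set $\mathcal M\subseteq\{\dot L=0\}$. On $\mathcal M$ one has $\Lambda_i S_i=\Lambda_i-\Lambda_i(1-S_i)=\Lambda_i$, so along any trajectory in $\mathcal M$ the infected variables obey the \emph{linear autonomous} system $\dot y=(F-V)y$. Since $\cR_0=1$, the stability modulus $s(F-V)$ vanishes (see, e.g., \cite{van2002repnum,van2008r0}); being an irreducible Metzler matrix, $F-V$ has $0$ as a simple eigenvalue with a positive right eigenvector $v$ (components $v_{A,i},v_{I,i}$), all other eigenvalues having negative real part. A solution of $\dot y=(F-V)y$ bounded for all $t\in\mathbb R$ — as it must be on the invariant bounded set $\mathcal M$ — therefore lies entirely in $\mathrm{span}(v)$, i.e. $y\equiv c\,v$ for a constant $c\ge 0$. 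If $c>0$, then $\Lambda_i(cv)\ge c\,\beta^{A}_{ii}v_{A,i}>0$ for every $i$, so $(1-S_i)\Lambda_i\equiv 0$ forces $S_i\equiv 1$, hence $\dot S_i\equiv 0$; but plugging $S_i=1$, $y=cv$ into the first block of \eqref{sairs3_net} ($\nu_i=0$) yields $\dot S_i=-\Lambda_i(cv)-\gamma\bigl(cv_{A,i}+cv_{I,i}\bigr)<0$, a contradiction. Hence $c=0$, so $\mathcal M\subseteq\{A=I=0\}$; conversely $\{A=I=0\}\cap\Gamma$ is invariant and contained in $\{\dot L=0\}$, so $\mathcal M=\{A=I=0\}\cap\Gamma$. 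By LaSalle every solution satisfies $A_i(t),I_i(t)\to 0$, and then the $i$-th $S$-equation is asymptotically autonomous with limit $\dot S_i=(\mu_i+\gamma)(1-S_i)$ (equivalently, a sandwich/comparison argument applies), giving $S_i(t)\to S_{0,i}$; this is global attractivity of $x_0$ in $\Gamma$. Lyapunov stability of $x_0$ at $\cR_0=1$ then follows from $\dot L\le 0$ (which confines $A_i,I_i$) together with the differential inequality $\tfrac{d}{dt}(1-S_i)\le-(\mu_i+\gamma)(1-S_i)+C\max_j(A_j+I_j)$ drawn from \eqref{sairs3_net} (which confines $1-S_i$), whence GAS.

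I expect the main obstacle to be exactly this LaSalle step: at $\cR_0=1$ the set $\{\dot L=0\}$ is large — it contains every state with all $S_i=1$ and the whole face $\{A=I=0\}$ — so the conclusion cannot be read off directly. The crux is that restricting to the largest invariant set cancels the nonlinear term $\Lambda_i S_i$ and reduces the infected subsystem to a linear one whose only bounded entire orbits are the non-negative multiples of the Perron eigenvector $v$, and that such a nonzero orbit is incompatible with invariance of the full system because it would force every $S_i\equiv 1$ while making $\dot S_i<0$.
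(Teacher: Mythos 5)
Your proposal follows essentially the same route as the paper's proof: the matrix-theoretic Lyapunov function $Q=\omega^{T}V^{-1}x$ built from the positive left Perron eigenvector (the construction of \cite{Shuai2013Lyapunov}), the explicit positivity check on $\omega$, the inequality $\dot Q\le(\cR_0-1)\,\omega^{T}x\le 0$ with the non-negative defect $\Lambda_i(S_{0,i}-S_i)$, and LaSalle's invariance principle. The only substantive difference is that you carry out in full the analysis of the largest invariant set in $\{\dot Q=0\}$ (reduction to the linear system $\dot y=(F-V)y$, simplicity of the zero eigenvalue of the irreducible Metzler matrix $F-V$ at $\cR_0=1$, and the contradiction with the $S$-equations) where the paper compresses this into a one-line assertion; your argument is correct and, if anything, more complete.
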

\begin{proof}
To prove the statement, we use the method presented in \cite{Shuai2013Lyapunov}. 

Rearranging the order of the equations such that the disease compartments can be written as $x= (A_1,\dots,A_n,I_1,\dots,I_n)$, system \eqref{sairs3_net}, restricted to these compartments,
can be rewritten as:
\begin{equation*}
x' = (F-V) x - f(x,S),
\end{equation*}
where 
\begin{equation*}
f(x,S) = \left( \sum_{j=1}^n \big(\beta^A_{1j} A_j+\beta^I_{1j} I_j\big) (S_{0,1}- S_1), \dots,  \sum_{j=1}^n \big(\beta^A_{nj} A_j+\beta^I_{nj} I_j \big) (S_{0,n}- S_n),0,\dots, 0 \right) \ge 0,
\end{equation*}
and $f(x,S)$ is a vector with non-negative elements for all $(x,S) \in \Gamma$ and $f(x,S_0) = 0$, for all $(x,S_0) \in \Gamma$.

Let $\omega^T$ be the left eigenvector of $M$ corresponding to the eigenvalue $\cR_0$. Note that in our case the irreducibility assumption for $M$ in \cite[Thm 2.2]{Shuai2013Lyapunov} fails. However, we can show that $\omega >0$.
Indeed, let $\omega^T=(\omega_1,\omega_2)$, where $\omega_1$ and $\omega_2$ are both vectors with $n$ components. It is easy to see that $\omega_1$ is the left-eigenvector of the non-negative matrix $M_1$ \eqref{M1} corresponding to its spectral radius $\rho(M_1)=\cR_0$. Since $M_1$ is irreducible and non-negative, it follows by the Perron-Frobenius theorem that $\omega_1>0$.  Moreover, from \eqref{NGM}, let
$$M_2=\left(\dfrac{\beta^I_{ij}S_{0,i}}{\delta_I + \mu_i}\right)_{i,j=1,\dots,n}, $$
then, we have $\omega_1 M_2 = \cR_0 \omega_2$; since $\omega_1 M_2>0$ 
it follows that $\omega_2>0$. Hence, $\omega >0$.
 Now, consider the following Lyapunov function
\begin{equation*}
    Q = \omega^T V^{-1}x.
\end{equation*}
By differentiating $Q$ along the solution of \eqref{sairs3_net}, we obtain
\begin{align*}
    Q' &= \omega^T V^{-1}x'\\
     &= \omega^T V^{-1}\left( F-V\right)x - \omega^T V^{-1} f(x,S)\\
     & = (\cR_0 - 1)\omega^T x -\omega^T V^{-1}f(x,S).  
\end{align*}
Since $\omega^T > 0$, $V^{-1} \ge 0 $ and $f(x,S) \ge 0 $, it follows that $Q' \le (\cR_0 -1) \omega^T x$, Hence, $Q' \le 0$ provided $\cR_0 \le 1$.
Moreover, $Q'=0$ if $x=0$ or $S_i=S_{0,i}$, for all $i=1, \ldots, n$, but this last case still implies $x=0$. It can be verified that the only invariant set where $x=0$ is the singleton $\{x_0\}$. Hence, by LaSalle's invariance principle, the DFE $x_0$ is globally asymptotically stable if $\cR_0\leq1$. \end{proof}

\section{Existence and uniqueness of endemic equilibrium}\label{ex_EE}

 To prove the existence and uniqueness of an endemic equilibrium point, we recall the following definition and theorem from \cite{hethcote1985stability}.

\begin{definition}
A function $F(x):\mathbb{R}^n_+ \rightarrow \mathbb{R}^n_+$ is called \emph{strictly sublinear} if for fixed $x \in (0, \infty)^n$ and fixed $r \in (0,1)^n$, there exists an $\eps>0$ such that $F(rx)\geq (1+\eps)r F(x)$, where $\geq$ denotes the pointwise ordering in $\mathbb R^n$. 
\end{definition}

\begin{theorem}[Thm 2.2 \cite{hethcote1985stability}]\label{thieme}
Let $F(x):\mathbb{R}^n_+ \rightarrow \mathbb{R}^n_+$ be a continuous, monotone nondecreasing, strictly sublinear, and bounded function. Let $F(0)=0$ and $J_F(0)$ exists and be irreducible, where $J_F$ is the Jacobian matrix of $F$. Then $F(x)$ does not have a nontrivial fixed
point on the boundary of $\mathbb R^n_+$. Moreover, $F(x)$ has a positive fixed point if and only if $\rho(J_F(0))>1$. If there is a positive fixed point, then it is unique. 
\end{theorem}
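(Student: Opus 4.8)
The plan is to run the classical Krasnoselskii-type argument for monotone sublinear operators, in four pieces: (i) derive the comparison $F(x)\le J_F(0)x$ on $\mathbb R^n_+$, together with its strict form on the open orthant; (ii) exclude nonzero fixed points on $\partial\mathbb R^n_+$; (iii) establish the dichotomy governed by $\rho(J_F(0))$; (iv) prove uniqueness. For (i), fix $x\in(0,\infty)^n$ and set $g(t)=F(tx)/t$ for $t>0$. Applying strict sublinearity at the point $tx$ with $r=(s/t)\mathbf 1$ gives, for $0<s<t$, $F(sx)=F\big((s/t)(tx)\big)\ge(1+\eps)(s/t)F(tx)$, i.e. $g(s)\ge(1+\eps)g(t)$ with $\eps>0$; hence each coordinate of $g$ is strictly decreasing wherever it is positive. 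Differentiability of $F$ at $0$ with $F(0)=0$ gives $g(t)\to J_F(0)x$ as $t\to0^+$, so $J_F(0)x=\sup_{t>0}g(t)\ge g(1)=F(x)$, and by continuity $F(x)\le J_F(0)x$ for all $x\ge0$. Since the decrease is strict, $\sup_{t>0}g(t)>g(1)$, so in fact $F(x)<J_F(0)x$ coordinatewise for every $x>0$. A small auxiliary observation, used below: irreducibility of $J_F(0)$ forbids a vanishing row, so no $F_i$ is identically zero on a box $[0,x]$ with $x>0$, whence $F(x)>0$ whenever $x>0$.

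For (ii), suppose $x^\ast\neq0$, $x^\ast\in\partial\mathbb R^n_+$, $F(x^\ast)=x^\ast$, and write $Z=\{i:x^\ast_i=0\}$ and $P=\{i:x^\ast_i>0\}$, both nonempty. Strong connectedness of the digraph of $J_F(0)$ yields indices $\ell\in Z$ and $p\in P$ with $\partial F_\ell/\partial x_p(0)>0$. The scalar function $s\mapsto F_\ell(se_p)$ is nondecreasing, vanishes at $0$, and has positive right derivative there, so it is positive for every $s>0$; by monotonicity of $F$, $0=x^\ast_\ell=F_\ell(x^\ast)\ge F_\ell(x^\ast_p e_p)>0$, a contradiction. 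Hence every nonzero fixed point is strictly positive.

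For (iii), let $w>0$ be a left and $v>0$ a right Perron eigenvector of the irreducible nonnegative matrix $J_F(0)$, so $w^\top J_F(0)=\rho\,w^\top$ and $J_F(0)v=\rho v$ with $\rho=\rho(J_F(0))$. If $x^\ast>0$ is a fixed point, (i) gives $w^\top x^\ast=w^\top F(x^\ast)<w^\top J_F(0)x^\ast=\rho\,w^\top x^\ast$, so $\rho>1$; contrapositively, $\rho\le1$ rules out positive fixed points, hence by (ii) any nontrivial fixed point. Conversely, if $\rho>1$, differentiability gives $F(\epsilon v)=\epsilon\rho v+o(\epsilon)\ge\epsilon v$ coordinatewise for all small $\epsilon>0$, so $x_0:=\epsilon v$ is a subsolution; monotonicity makes the iterates $x_{k+1}=F(x_k)$ nondecreasing, boundedness of $F$ keeps them bounded, and continuity yields $x_k\uparrow x^\ast$ with $F(x^\ast)=x^\ast$ and $x^\ast\ge\epsilon v>0$. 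For uniqueness, if $x^\ast\neq y^\ast$ are both positive fixed points, put $\sigma=\min\{s>0:\ sx^\ast\ge y^\ast\}=\max_i y^\ast_i/x^\ast_i\in(0,\infty)$. If $\sigma>1$, strict sublinearity at $y^\ast$ with $r=\sigma^{-1}\mathbf 1$ gives $x^\ast=F(x^\ast)\ge F(\sigma^{-1}y^\ast)\ge(1+\eps)\sigma^{-1}F(y^\ast)=(1+\eps)\sigma^{-1}y^\ast$, i.e. $\tfrac{\sigma}{1+\eps}x^\ast\ge y^\ast$ with $\tfrac{\sigma}{1+\eps}<\sigma$, contradicting minimality of $\sigma$; hence $\sigma\le1$, so $x^\ast\ge\sigma x^\ast\ge y^\ast$, and by symmetry $x^\ast=y^\ast$.

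I expect step (i) to be the main obstacle: extracting the clean comparison $F(x)\le J_F(0)x$ — and, crucially, its strict version on $(0,\infty)^n$, which is what lets the argument cover $\rho=1$ rather than just $\rho<1$ — from the coordinatewise ``$\exists\,\eps$'' formulation of strict sublinearity, and pairing it correctly with differentiability at the boundary point $0$. The irreducibility bookkeeping in (ii) is the other delicate point. Once (i) and the Perron--Frobenius facts are in hand, the monotone-iteration existence argument and the sublinearity-based uniqueness argument are routine.
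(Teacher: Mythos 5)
The paper does not prove this statement: it is imported verbatim, with citation, as Theorem 2.2 of Hethcote and Thieme (1985), so there is no internal proof to compare your attempt against. Your argument is a correct and complete rendition of the classical proof for monotone, strictly sublinear operators: the comparison $F(x)\le J_F(0)x$ (strict on the open orthant) obtained by letting $t\to 0^+$ in $F(tx)/t$, the exclusion of boundary fixed points via the $Z$/$P$ partition and irreducibility, the left Perron eigenvector argument for necessity of $\rho(J_F(0))>1$ (which correctly handles the case $\rho=1$), monotone iteration from the subsolution $\epsilon v$ for sufficiency, and the Krasnoselskii scaling argument for uniqueness. The only implicit reading you rely on is that ``$J_F(0)$ exists'' means $F$ is differentiable at $0$ (so that $F(tx)/t\to J_F(0)x$ along every direction), which is the standard interpretation and the one used in the cited source.
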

By using the above result, we can prove the following theorem.
\begin{theorem}\label{endemic}
System (\ref{sairs3_net}) admits a unique endemic equilibrium $x^*:=(S_1^*,A_1^*,I_1^*,\dots,S_n^*,A_n^*,I_n^*)$ in $\mathring \Gamma$ if and only if $\cR_0>1$.
\end{theorem}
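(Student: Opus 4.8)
The plan is to translate the equilibrium conditions for \eqref{sairs3_net} into a fixed-point equation on $\mathbb{R}^n_+$ and then to apply Theorem~\ref{thieme}. At an equilibrium $x^*$, the $I_i$-equations give $I_i^*=\tfrac{\alpha}{\delta_I+\mu_i}A_i^*$; using the $A_i$-equation to replace the force-of-infection term $\bigl(\sum_j(\beta^A_{ij}A_j^*+\beta^I_{ij}I_j^*)\bigr)S_i^*$ by $(\alpha+\delta_A+\mu_i)A_i^*$ in the $S_i$-equation and eliminating $I_i^*$, one solves the $S_i$-equation explicitly as $S_i^*=S_{0,i}-c_iA_i^*$, with $S_{0,i}$ as in \eqref{DFE_net} and
\[
c_i:=\frac{\alpha+\delta_A+\mu_i+\gamma+\frac{\gamma\alpha}{\delta_I+\mu_i}}{\mu_i+\nu_i+\gamma}>0 .
\]
Substituting $S_i^*$ and the $I_j^*$ into the $A_i$-equation and abbreviating $b_{ij}:=\beta^A_{ij}+\tfrac{\alpha\beta^I_{ij}}{\delta_I+\mu_j}\ge 0$, the equilibria of \eqref{sairs3_net} with $A^*\ge 0$ correspond bijectively to the fixed points in $\mathbb{R}^n_+$ of the map $F=(F_1,\dots,F_n)$,
\[
F_i(A)=\frac{S_{0,i}\sum_{j=1}^n b_{ij}A_j}{(\alpha+\delta_A+\mu_i)+c_i\sum_{j=1}^n b_{ij}A_j},\qquad i=1,\dots,n,
\]
the trivial fixed point $A=0$ corresponding to the DFE.

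I would then verify that $F$ meets all the hypotheses of Theorem~\ref{thieme}. Each $F_i$ equals $\phi_i\bigl(\sum_j b_{ij}A_j\bigr)$, where $\phi_i(g)=\dfrac{S_{0,i}g}{(\alpha+\delta_A+\mu_i)+c_ig}$ is smooth on $[0,\infty)$, strictly increasing, strictly concave, bounded above by $S_{0,i}/c_i$, with $\phi_i(0)=0$; since $A\mapsto\sum_j b_{ij}A_j$ is a nonnegative linear form, $F$ is continuous, bounded, monotone nondecreasing and satisfies $F(0)=0$, while strict sublinearity follows, applied coordinatewise to the linear forms, from the strict concavity of the $\phi_i$ together with $\phi_i(0)=0$ (here the linear forms are positive on $(0,\infty)^n$ because $b_{ii}\ge\beta^A_{ii}>0$ by Assumption~\ref{assump}). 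Its Jacobian at the origin is $J_F(0)=\bigl(\tfrac{S_{0,i}}{\alpha+\delta_A+\mu_i}\,b_{ik}\bigr)_{i,k}$, which exists; its nonzero pattern contains that of $[\beta^A_{ij}]$, which is irreducible by Assumption~\ref{assump}, so $J_F(0)$ is irreducible. Finally $J_F(0)$ is obtained from the nonzero block of the next-generation matrix $FV^{-1}$ of Lemma~\ref{propR0} by moving a positive diagonal factor from one side to the other, so $\rho(J_F(0))=\rho(FV^{-1})=\cR_0$.

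Granting this, Theorem~\ref{thieme} yields that $F$ has a necessarily unique positive fixed point $A^*$ if and only if $\rho(J_F(0))=\cR_0>1$, and no nontrivial fixed point on $\partial\mathbb{R}^n_+$. When $\cR_0>1$, I would recover $x^*$ via $I_i^*=\tfrac{\alpha}{\delta_I+\mu_i}A_i^*>0$ and $S_i^*=S_{0,i}-c_iA_i^*$, noting $S_i^*>0$ because $F_i(A^*)<S_{0,i}/c_i$, and that $R_i^*:=1-S_i^*-A_i^*-I_i^*$ satisfies the equilibrium identity $(\gamma+\mu_i)R_i^*=\delta_AA_i^*+\delta_II_i^*+\nu_iS_i^*>0$; hence $x^*\in\mathring\Gamma$. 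Conversely, any endemic equilibrium in $\mathring\Gamma$ has all $A_i^*>0$: if $A_i^*=0$ then $I_i^*=0$ and, since $S_i^*>0$, the $A_i$-equation forces $\sum_j b_{ij}A_j^*=0$, which by irreducibility of $(b_{ij})$ propagates to $A^*\equiv 0$, the DFE, excluded from $\mathring\Gamma$. Thus the endemic equilibria in $\mathring\Gamma$ are exactly the positive fixed points of $F$, so one exists and is unique if and only if $\cR_0>1$.

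The main obstacle is the bookkeeping around $J_F(0)$: identifying it with the next-generation matrix up to a commutation of diagonal factors so that $\rho(J_F(0))=\cR_0$ exactly, and checking that $J_F(0)$ inherits irreducibility from $[\beta^A_{ij}]$ under Assumption~\ref{assump}. The remaining hypotheses of Theorem~\ref{thieme} — continuity, boundedness, monotonicity, strict sublinearity — are routine given the concave scalar form of each $F_i$, and confirming that the reconstructed $x^*$ lies in the open region $\mathring\Gamma$, rather than merely in $\mathbb{R}^{3n}_+$, is a short computation.
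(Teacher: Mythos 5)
Your proposal is correct and follows essentially the same route as the paper: eliminate $I_i^*$ and $S_i^*$ to reduce the equilibrium conditions to a fixed-point equation for $A^*$ under a map of the form $\bigl(\text{linear form}\bigr)/\bigl(\text{const}+\text{const}\cdot\text{linear form}\bigr)$ — which, after clearing $(\alpha+\delta_A+\mu_i)$, is literally the paper's map $H$ — and then apply Theorem~\ref{thieme}. Your verification is if anything slightly more careful than the paper's, notably in identifying $\rho(J_F(0))$ with $\cR_0$ by commuting the diagonal factor and in checking that the reconstructed $x^*$ lies in $\mathring\Gamma$.
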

\begin{proof}
An equilibrium point is a solution of the non linear equations obtained by setting the right-hand side of equations \eqref{sairs3_net} equal to zero. 
Then, the following must hold:
\begin{eqnarray}
     \mu_i -\sum_{j=1}^n \bigg(\beta^{A}_{ij} A_j^* + \beta^{I}_{ij} I_j^*\bigg)S_i^* -(\mu_i + \nu_i +\gamma) S_i^* + \gamma(1-A_i^*-I_i^*)&=0 , \label{endeq1}\\ 
     \sum_{j=1}^n \bigg(\beta^{A}_{ij} A_j^* + \beta^{I}_{ij} I_j^*\bigg)S_i^*-(\alpha + \delta_A +\mu_i) A_i^* &=0,  \label{endeq2}\\
     \alpha A_i^* - (\delta_I + \mu_i)I_i^*&=0, \label{endeq3} 
\end{eqnarray}
for $i=1,2,\dots,n$. By excluding as solution the DFE \eqref{DFE_net}, we assume $A^*_i>0$, for some $1\le i\le n$. From (\ref{endeq3}), we immediately obtain
\begin{equation}\label{IofA}
I_i^* = \dfrac{\alpha}{\delta_I + \mu_i}A_i^*=: K_i A_i^*,
\end{equation}
for all $i=1,2,\dots,n$. Substituting (\ref{IofA}) in (\ref{endeq2}), we obtain
\begin{equation}\label{SofA}
    S_i^*=\dfrac{(\alpha + \delta_A +\mu_i) A_i^*}{\sum_{j=1}^n (\beta^{A}_{ij}  + \beta^{I}_{ij}K_j) A_j^*}.
\end{equation}
By our assumption on $x^*$, the denominator of (\ref{SofA}) is strictly positive. Lastly, substituting (\ref{IofA}) and (\ref{SofA}) into (\ref{endeq1}), we obtain
$$
    \mu_i -(\alpha + \delta_A +\mu_i) A_i^* -(\mu_i + \nu_i +\gamma) \dfrac{(\alpha + \delta_A +\mu_i) A_i^*}{\sum_{j=1}^n (\beta^{A}_{ij}  + \beta^{I}_{ij}K_j) A_j^*} + \gamma(1-(1+K_i)A_i^*)=0,
$$
which can be rearranged to give
$$
 A_i^*=\dfrac{(\mu_i+\gamma)\sum_{j=1}^n (\beta^{A}_{ij}  + \beta^{I}_{ij}K_j) A_j^*}{(\mu_i + \nu_i +\gamma)(\alpha + \delta_A +\mu_i)+(\alpha + \delta_A +\mu_i+\gamma+\gamma K_i)\sum_{j=1}^n (\beta^{A}_{ij}  + \beta^{I}_{ij}K_j) A_j^*}.
$$
We can collect $(\mu_i + \nu_i +\gamma)(\alpha + \delta_A +\mu_i)$ and $(\mu_i+\gamma)$ in both the numerator and denominator, to obtain
\begin{equation}\label{Aendeq}
A_i^*=\dfrac{\sum_{j=1}^n (M_1)_{i,j} A_j^*}{1+(\mu_i+\gamma)^{-1}(\alpha + \delta_A +\mu_i+\gamma+\gamma K_i)\sum_{j=1}^n (M_1)_{i,j} A_j^*},
\end{equation}
with $M_1$ as in (\ref{R0_net}). 

Define a function $H=(h_1,\dots,h_n):\mathbb{R}_+^n \rightarrow \mathbb{R}_+^n$, in the following way:
$$h_i(y)=\dfrac{\sum_{j=1}^n (M_1)_{i,j} y_j}{1+(\mu_i+\gamma)^{-1}(\alpha + \delta_A +\mu_i+\gamma+\gamma K_i)\sum_{j=1}^n (M_1)_{i,j} y_j}, \qquad i=1,2,\dots,n.
$$
Then, since 
$$
\frac{\partial h_j}{\partial y_i}> 0,
$$
for all $i,j=1,2,\dots,n$, $H$ is monotonically increasing in all its components. 
Moreover, $J_H(0)=M_1$ that is a non-negative and irreducible matrix and the function $H(x)$ is bounded and strictly sublinear with
$$\eps=\min_{i} \frac{(1-r)\xi_i \sum_{j=1}^n (M_1)_{i,j} y_j }{1+r\xi_i \sum_{j=1}^n (M_1)_{i,j} y_j},$$
where
$$\xi_i=(\mu_i+\gamma)^{-1}(\alpha + \delta_A +\mu_i+\gamma+\gamma K_i).$$
Thus, by Theorem \ref{thieme}, we have that system \eqref{sairs3_net} has an unique endemic equilibrium in $\mathring \Gamma$.
\end{proof}

\begin{remark}
From Eq. \eqref{IofA} we can note that since $I_i^*<1$, we have that $A_i^*<\frac{\delta_I+\mu_i}{\alpha}$.
\end{remark}

\subsection{Local asymptotic stability}\label{local_stability}

In this section, we 
investigate the local asymptotic stability of the endemic equilibrium.  

\begin{theorem} Assume $\cR_0>1$ and that for any fixed $j$, $\beta^I_{ij}=h_j \beta^A_{ij}$ for all $i=1, \ldots, n,$. Moreover, let us assume that $$\delta_A > \nu_i, \quad \delta_I> \nu_i, \quad \text{and} \quad
(\delta_I-\nu_i)\alpha \leq 2(\mu_i+2\nu_i+\gamma+\delta_I)\sqrt{((\mu_i+\nu_i+\gamma)(\delta_I+\nu_i))}+(\mu_i+2\nu_i+\gamma +\delta_I)^2, $$
 for $i=1, \ldots, n$.
Then, the endemic equilibrium $x^*:=(S_1^*,A_1^*,I_1^*,\dots,S_n^*,A_n^*,I_n^*)$ is local asymptotically stable.
\end{theorem}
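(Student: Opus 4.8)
The plan is to linearise \eqref{sairs3_net} at $x^*$ and show the Jacobian $J$ is Hurwitz; local asymptotic stability of $x^*$ follows. Write $(s_i,a_i,b_i)$ for the perturbation of $(S_i,A_i,I_i)$ and $\Lambda_i^*=\sum_{j=1}^n(\beta^A_{ij}A_j^*+\beta^I_{ij}I_j^*)$ for the force of infection at $x^*$. The first observation is that the hypothesis $\beta^I_{ij}=h_j\beta^A_{ij}$ makes every off-diagonal block of $J$ rank one: the block acting from group $j$ on group $i$ is $S_i^*\beta^A_{ij}\,u\,v_j^{T}$ with $u=(-1,1,0)^{T}$, $v_j=(0,1,h_j)^{T}$, so group $j$ influences group $i$ only through the scalar $w_j:=a_j+h_jb_j$. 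I would exploit this to deflate the $3n$-dimensional eigenvalue problem $Jv=\lambda v$: the $I$-equation gives $b_i=\tfrac{\alpha}{\delta_I+\mu_i+\lambda}a_i$, adding the $S$- and $A$-equations cancels both $\Lambda_i^* s_i$ and the whole coupling and gives, after inserting $b_i$, that $s_i=\sigma_i(\lambda)a_i$ with the explicit rational function $\sigma_i(\lambda)=-\big[(\alpha+\delta_A+\mu_i+\gamma+\lambda)+\tfrac{\gamma\alpha}{\delta_I+\mu_i+\lambda}\big]/(\mu_i+\nu_i+\gamma+\lambda)$; substituting back into the $A$-equation and using the equilibrium identity $\Lambda_i^* S_i^*=(\alpha+\delta_A+\mu_i)A_i^*$ shows that $\lambda$ is an eigenvalue of $J$ iff
\[
\det\big[\,\diag(N_i(\lambda))-\diag(S_i^*)\,[\beta^A_{ij}]\,\diag(\theta_j(\lambda))\,\big]=0,
\]
where $\theta_j(\lambda)=1+\tfrac{h_j\alpha}{\delta_I+\mu_j+\lambda}$ and $N_i(\lambda)=(\alpha+\delta_A+\mu_i+\lambda)-\Lambda_i^*\sigma_i(\lambda)$; the finitely many poles of these functions lie in $\{\Re\lambda<0\}$ and are handled separately.

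I would then prove this determinant is nonzero for every $\lambda$ with $\Re\lambda\ge0$. The base case $\lambda=0$ holds unconditionally: $\sigma_i(0)<0$ so $N_i(0)>0$, and evaluating the reduced matrix on the positive vector $(A_1^*,\dots,A_n^*)$, after using $\sum_j\beta^A_{ij}\theta_j(0)A_j^*=\Lambda_i^*$, gives $\diag(N_i(0))A^*-\diag(S_i^*)[\beta^A_{ij}]\diag(\theta_j(0))A^*=(\Lambda_i^*|\sigma_i(0)|A_i^*)_i>0$ componentwise; since $[\beta^A_{ij}]$ is nonnegative and irreducible, Perron--Frobenius gives $\rho\big(\diag(N_i(0))^{-1}\diag(S_i^*)[\beta^A_{ij}]\diag(\theta_j(0))\big)<1$, i.e.\ the reduced matrix at $0$ is a nonsingular M-matrix. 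For general $\lambda$ with $\Re\lambda\ge0$ (and each $N_i(\lambda)\ne0$, which is part of what must be shown), a nontrivial kernel vector $a$ forces $|a_i|\le\sum_j\tfrac{S_i^*|\theta_j(\lambda)|}{|N_i(\lambda)|}\beta^A_{ij}|a_j|$, hence $\rho$ of the nonnegative matrix with entries $S_i^*|\theta_j(\lambda)|\beta^A_{ij}/|N_i(\lambda)|$ is $\ge1$ by Perron--Frobenius; so it suffices to control the scalars $|\theta_j(\lambda)|$ and $|N_i(\lambda)|$, comparing them with their values at $\lambda=0$ where the previous step left slack. The bound $|\theta_j(\lambda)|\le\theta_j(0)$ for $\Re\lambda\ge0$ is immediate from $|1+z|\le1+|z|$ and $|\delta_I+\mu_j+\lambda|\ge\delta_I+\mu_j$.

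The main obstacle is the estimate on $N_i$ over the closed right half-plane. Since $N_i$ is analytic there and $|N_i(\lambda)|\to\infty$ as $|\lambda|\to\infty$, the minimum-modulus principle reduces the matter to the imaginary axis $\lambda=i\omega$, once $N_i$ is known to be zero-free in $\{\Re\lambda\ge0\}$ — which is itself a Routh--Hurwitz statement about the cubic obtained by clearing denominators in $N_i$, holding robustly in $\Lambda_i^*\ge0$. Clearing the denominators $(\mu_i+\nu_i+\gamma+i\omega)(\delta_I+\mu_i+i\omega)$ turns the inequality needed to run the Perron--Frobenius comparison into the nonnegativity on $[0,\infty)$ of a polynomial in $\omega^2$; the hypotheses $\delta_A>\nu_i$ and $\delta_I>\nu_i$ fix the signs of its leading and constant coefficients (equivalently, make the associated reduced cubic's Routh--Hurwitz quantities positive), and the binding requirement reduces to a single quadratic-in-$\omega^2$ factor having no positive root, whose discriminant being nonpositive is exactly the stated inequality $(\delta_I-\nu_i)\alpha\le2(\mu_i+2\nu_i+\gamma+\delta_I)\sqrt{(\mu_i+\nu_i+\gamma)(\delta_I+\nu_i)}+(\mu_i+2\nu_i+\gamma+\delta_I)^2$. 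This computation is the multi-group counterpart of the single-group Routh--Hurwitz analysis in \cite{ottaviano2022global}, to which the whole argument collapses when $n=1$; combining it with the M-matrix estimate at $\lambda=0$ shows $J$ has no eigenvalue in $\{\Re\lambda\ge0\}$, so $x^*$ is locally asymptotically stable. An alternative route is to build a block Lyapunov function $\sum_i c_i\,\xi_i^{T}P_i\xi_i$ for the linearised system, the $P_i$ coming from the within-group analysis and the weights $c_i$ from the graph-theoretic method applied to the weighted digraph of $[\beta^A_{ij}]$; the same per-group inequalities are precisely what make each within-group Lyapunov inequality solvable.
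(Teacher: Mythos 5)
Your reduction is, up to a change of coordinates, the same one the paper uses (following the Krasnoselskii/sublinearity strategy of Thieme and Hethcote): the hypothesis $\beta^I_{ij}=h_j\beta^A_{ij}$ collapses the inter-group coupling to the scalar $a_j+h_jb_j$, the $I$- and $S$-equations are solved for $b_i$ and $s_i$ in terms of $a_i$, and one is left with an eigenvalue problem for the nonnegative irreducible matrix $[\beta^A_{ij}]$ weighted by scalar transfer functions. Concretely, your $\theta_j(\lambda)$ is the paper's $1+h_j\alpha/(z+\delta_I+\mu_j)$, your $\sigma_i(\lambda)$ equals $-(1+K^1_i(z)+K^2_i(z))$, and your $N_i(\lambda)$ equals $(\alpha+\mu_i+\delta_A)(1+\eta_i(z))$; your Perron--Frobenius comparison against the positive vector $A^*$ (using $\sum_j\beta^A_{ij}\theta_j(0)A^*_jS_i^*=(\alpha+\delta_A+\mu_i)A_i^*$) is exactly the paper's minimality-of-$\epsilon$ argument with the fixed point $CA^*=A^*$. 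So the skeleton is correct and not genuinely different.

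The gap is in the half-plane estimate, which is precisely the step where the three parameter conditions must do their work, and which you do not carry out. First, the bound you propose, $|N_i(\lambda)|\ge N_i(0)$ for $\Re\lambda\ge0$ ``where the previous step left slack,'' is not the inequality your own test-vector argument needs, and it is stronger than what is available: on the imaginary axis $\Re K^1_i(i\omega)$ and $\Re K^2_i(i\omega)$ fall below their values at $0$, so there is no reason for $|N_i(\lambda)|$ to dominate $N_i(0)=(\alpha+\delta_A+\mu_i)+\Lambda_i^*|\sigma_i(0)|$. What the comparison against $A^*$ actually requires, after $|\theta_j(\lambda)|\le\theta_j(0)$, is the weaker bound $|N_i(\lambda)|\ge\alpha+\delta_A+\mu_i$ with strict inequality, i.e.\ $\Re\bigl(1+K^1_i(z)+K^2_i(z)\bigr)>0$ on $\{\Re z\ge0\}$; this also subsumes the zero-freeness of $N_i$ that you invoke separately. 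Second, the verification of this real-part inequality is exactly where $\delta_A>\nu_i$, $\delta_I>\nu_i$ and the displayed bound on $(\delta_I-\nu_i)\alpha$ enter, and you only assert it (``the discriminant being nonpositive is exactly the stated inequality'') rather than prove it. The paper does this computation in Claim \ref{claimRe} and Appendix A: it splits $1+K^2_i$ into a term $(\delta_A-\nu_i)/(z+h_1)$, nonnegative in real part when $\delta_A\ge\nu_i$, plus $1+\eps/((z+h_1)(z+h_2))$ with $\eps=(\delta_I-\nu_i)\alpha$, whose real part is minimized explicitly over $\Im z$ and bounded below by $0$ precisely under the stated inequality. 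Until that computation (or an equivalent Routh--Hurwitz verification of the cleared-denominator polynomial) is actually performed, the proof is incomplete at the one point where the hypotheses of the theorem are used.
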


\begin{proof}
Usually, the asymptotic local stability of the endemic equilibrium point is studied by linearizing system \eqref{vectf} around that point. However, it is known that the endemic equilibrium is asymptotically stable if the linearized system $\frac{dy}{dt}=J_f(x^*)y$ has no solution of the form $y(t)=Y e^{zt}$ with
$$Y=(U_1, \ldots, U_n, V_1, \ldots, V_n, W_1, \ldots, W_n) \in \mathbb C^{3n},$$
$z \in \mathbb C$, $\Re z \geq 0$, i.e., it means that $zY=J_f(x^*)Y$ with $Y \in \mathbb C^n \setminus \{ 0\}, z \in \mathbb C$ implies $\Re z <0$ \cite{hethcote1985stability,thieme1985local}.
To prove our statement with this strategy, we consider the following system, equivalent to \eqref{vectf}:
$$\frac{dx}{dt}=f(x(t)),$$
where $x(t) = ( A_1(t), I_1(t), R_1(t), \dots, A_n(t),I_n(t),R_n(t))$ and
$f(x(t)) = (f_1(x(t)), f_2(x(t)), \dots, f_{3n}(x(t))),$ with
\begin{align*}
&f(x_1(t))=\sum_{j=1}^n (\beta^A_{ij} A_j(t) + \beta^I_{ij}I_j(t))(1-A_i(t)-I_i(t)-R_i(t))-(\alpha+\mu_i+\delta_A)A_i(t),\\
&f(x_2(t))=\alpha A_i(t)-(\mu_i+\delta_I)I_i(t),\\
&f(x_3(t))=\delta_A A_i(t)+\delta_I I_i(t)+\nu_i(1-A_i(t)-I_i(t)-R_i(t))-(\mu_i+\gamma)R_i(t).
\end{align*}
Now, to prove the asymptotic local stability of $x^*$, we consider the following equations:
\begin{align}\label{UVW}
\begin{split}
zU_i&= (1-A_i^*-I_i^*-R_i^*)\sum_{j=1}^n (\beta^A_{ij} U_j + \beta^I_{ij}V_j)- \sum_{j=1}^n(\beta_{ij}^A A_j^*+\beta_{ij}^I I_j^*)(U_i+V_i+W_i)-(\alpha +\mu_i+\delta_A)U_i,\\  
zV_i&= \alpha U_i -(\delta_I + \mu_i)V_i, \\
zW_i&= (\delta_A-\nu_i)U_i +(\delta_I -\nu_i)V_i-(\mu_i+\nu_i+\gamma)W_i, \qquad i=1, \ldots,n,
\end{split}    
\end{align}
with $U_i,V_i,W_i, z \in \mathbb C$. We proceed by assuming that $\Re z \geq 0$ and showing that this assumption leads to a contradiction.

From the second and third equation of \eqref{UVW}, we have respectively
\begin{equation}\label{Vi}
V_i=\frac{\alpha}{z+\delta_I+\mu_i}U_i := K_i^1(z)U_i,
\end{equation}
and
\begin{equation}\label{Wi}
W_i=\left[\frac{1}{z+\mu_i+\nu_i+\gamma}\left((\delta_A-\nu_i)+\frac{(\delta_I-\nu_i)\alpha}{z+\delta_I+\nu_i}\right)\right]U_i:=K_i^2(z)U_i.
\end{equation}
Now, considering the assumption that fixed $j$, $\beta^I_{ij}=h_j \beta^A_{ij}$ for all $i=1, \ldots, n,$
and replacing \eqref{Vi} and \eqref{Wi} in the first equation of \eqref{UVW}, we obtain
\begin{equation*}
zU_i=S_i^* \sum_{j=1}^n\beta^A_{ij}(1+h_jK^1_j(z))U_j-\left[ \sum_{j=1}^n\beta^A_{ij}(A_j^*+h_j I_j^*)(1+K_i^1(z)+K_i^2(z))+(\alpha+\mu_i+\delta_A)\right]U_i,   
\end{equation*}
from which
\begin{equation}
\left[1+ \frac{1}{\alpha+\mu_i+\delta_A}\left(z+\sum_{j=1}^n\beta^A_{ij}(A_j^*+h_j I_j^*)(1+K_i^1(z)+K_i^2(z)) \right) \right] U_i= \frac{S_i^*}{\alpha+\mu_i+\delta_A}  \sum_{j=1}^n\beta^A_{ij}(1+h_jK^1_j(z))U_j.
\end{equation}
Now, let
\begin{equation*}
\eta_i(z)=\frac{1}{\alpha+\mu_i+\delta_A}\left(z+\sum_{j=1}^n\beta^A_{ij}(A_j^*+h_j I_j^*)(1+K_i^1(z)+K_i^2(z)) \right),
\end{equation*}
and consider the following transformation:
$$U_j=\left(1+\frac{h_j \alpha}{z+\delta_I+\mu_j}\right)^{-1}\left(1+\frac{h_j \alpha}{\delta_I+\mu_j}\right)\tilde U_j.$$
Then, we get
\begin{equation}\label{1eta}
\left(1+ \eta_i(z) \right) \left(1+\frac{h_i \alpha}{z+\delta_I+\mu_i}\right)^{-1}\left(1+\frac{h_i \alpha}{\delta_I+\mu_i}\right)\tilde U_i=   \frac{S_i^*}{\alpha+\mu_i+\delta_A}  \sum_{j=1}^n\beta^A_{ij}\left(1+\frac{h_j\alpha}{\delta_I+\mu_j}\right)\tilde U_j.
\end{equation}
Now, let us note that if $\Re z \geq 0$, then 
\begin{equation}\label{Retildeeta}
\Re \left(\left(1+\frac{h_i \alpha}{z+\delta_I+\mu_i}\right)^{-1}\left(1+\frac{h_i \alpha}{\delta_I+\mu_i}\right)\right) \geq 1.
\end{equation}
Hence, we can rewrite \eqref{1eta} in the following form:
\begin{equation}\label{1eta2}
\left(1+ \eta_i(z) \right)(1+\tilde \eta_i(z)) \tilde U_i=(C\tilde U)_i.   
\end{equation}
where $C=(c_{ij})$ with 
$$c_{ij}=\frac{S_i^*}{\alpha+\mu_i+\delta_A}\sum_{j=1}^n \beta_{ij}^A \left(1+\frac{h_j \alpha}{\delta_I+\mu_j}\right), \qquad i,j=1, \ldots, n.$$
From \eqref{Retildeeta}, we have that $\Re \tilde \eta_i(z) \geq 0$. Moreover, the following claim, whose proof is given in Appendix \ref{appendix}, holds:
\begin{claim}\label{claimRe}
 If $\Re z \geq 0$, then $ \Re \eta_i(z) > 0$. 
\end{claim}
  Now, let us note that $C$ is a non-negative matrix and that 
 $A^*=CA^*$, where $A^*=(A_1^*, \ldots, A^*_n)$. Let 
 $$\eta(z)=\inf\{\Re \eta_i(z), i=1, \ldots,n \}, \qquad \text{and} \qquad |\tilde U|=(|\tilde U_1|, \ldots, |\tilde U_n|),$$ and taking the absolute values in \eqref{1eta2}, we get
 \begin{equation}\label{1etaleq}
    (1+\eta(z))|\tilde U| \leq C|\tilde U|. 
 \end{equation}
 It is easy to verify that if $\Re z \geq 0$, then $\Re \eta_i(z) >0$ for all $i$, hence $\eta(z)>0$. Now, we define $\epsilon$ to be the minimum value for which $|\tilde U| \leq \epsilon A^*$. Since the components of $A^*$ belong to $(0,1)$, $\epsilon < \infty$. Hence, by \eqref{1etaleq}, $(1+\eta(z))|\tilde U|\leq C |\tilde U| \leq \epsilon C A^*= \epsilon A^*$. This inequality contradicts the minimality of $\epsilon$ because $\eta(z)>0$ if $\Re z \geq 0$, thus we can conclude that $\Re z <0$ and the equilibrium is stable.
 \end{proof}

As in \cite{ottaviano2022global}, we conjecture that some, if not all, these technical assumptions could be relaxed, as our numerical simulations suggest. However, the techniques we use in this paper require such assumptions on the parameters in order to reach a result, and multigroup models often require cumbersome hypotheses \cite{SHU20121581,li2009global,SUN2011280}.

 \section{Global stability of the endemic equilibrium}\label{global_stability}
In this section, we first discuss the persistence of the disease, then we investigate the global stability property of the endemic equilibrium for some variations of the original model \eqref{sairs_net}.
\begin{definition}\label{persistence_def1}
System \eqref{sairs3_net} is said to be \emph{uniformly persistent} if there exists a constant $0<\eps<1$ such that any solution $x(t)$ with $x(0) \in \mathring \Gamma $ satisfies 

\begin{equation}\label{pers}
\min \{ \liminf_{t \to \infty} S_i(t), \quad \liminf_{t \to \infty} A_i(t), \quad \liminf_{t \to \infty} I_i(t)\} \geq \eps, \qquad i=1, \ldots, n.
\end{equation}
\end{definition}


\begin{theorem}\label{uniform_per2}
If $\mathcal{R}_0>1$, 
system \eqref{sairs3_net} is uniformly persistent. 
\end{theorem}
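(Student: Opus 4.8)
The strategy is to show that, when $\mathcal R_0>1$, the disease-free face of $\Gamma$ repels every interior trajectory, so that the total disease prevalence stays bounded away from zero, and then to propagate this bound to all $3n$ compartments. Throughout I use that $\Gamma$ is compact and positively invariant (Theorem~\ref{invset}), so the induced semiflow has a compact attractor. Persistence of the susceptibles is immediate and does not require $\mathcal R_0>1$: since $0\le A_j(t),I_j(t)\le 1$ and $\gamma(1-A_i-I_i)\ge 0$, the first $n$ equations of \eqref{sairs3_net} give $\dot S_i\ge \mu_i-\bigl(\mu_i+\nu_i+\gamma+\sum_{j=1}^n(\beta^A_{ij}+\beta^I_{ij})\bigr)S_i$, so a comparison argument yields $\liminf_{t\to\infty}S_i(t)\ge \sigma_i:=\mu_i/\bigl(\mu_i+\nu_i+\gamma+\sum_{j=1}^n(\beta^A_{ij}+\beta^I_{ij})\bigr)>0$. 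It therefore remains to bound $A_i$ and $I_i$ away from $0$. Put $p(x)=\sum_{i=1}^n\bigl(A_i+I_i\bigr)$ and $X_0=\{x\in\Gamma:p(x)>0\}$; by a standard strong-positivity argument based on the irreducibility of $[\beta^A_{ij}]$ (Assumption~\ref{assump}) together with $\alpha>0$ — if some $A_{i_0}(0)$ or $I_{i_0}(0)$ is positive, then, following a directed path in the transmission graph, $A_i(t),I_i(t)>0$ for every $i$ and every $t>0$ — the open set $X_0$ is forward invariant. By the classical result that, for a semiflow on a compact space with a forward-invariant persistence set, uniform \emph{weak} $p$-persistence upgrades to uniform \emph{strong} $p$-persistence, it suffices to exhibit $\eps_0>0$ with $\limsup_{t\to\infty}p(x(t))>\eps_0$ whenever $x(0)\in X_0$.

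Suppose this fails: some solution with $x(0)\in X_0$ has $p(x(t))<\eps_0$ for all large $t$, where $\eps_0>0$ is to be fixed. Then $A_i(t),I_i(t)<\eps_0$ eventually, and inserting this into the $S_i$-equations and comparing with the corresponding linear ODEs gives $S_i(t)\ge S_{0,i}-g(\eps_0)$ for all large $t$, with $S_{0,i}$ as in \eqref{DFE_net} and $g(\eps_0)\to 0$ as $\eps_0\to 0$. Hence the vector $w=(A_1,\dots,A_n,I_1,\dots,I_n)^{T}$ satisfies the componentwise inequality $\dot w\ge (F_{g(\eps_0)}-V)\,w$, where $F_{g(\eps_0)}$ is the matrix $F$ of Lemma~\ref{propR0} with each $S_{0,i}$ replaced by $S_{0,i}-g(\eps_0)$ and $V$ is as in \eqref{mat_V}. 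The matrix $F-V$ is an irreducible Metzler matrix — its off-diagonal entries are nonnegative, and strong connectivity of its digraph follows from the irreducibility of $[\beta^A_{ij}]$ (links among the $A$-coordinates), from $\alpha>0$ (links $A_i\to I_i$) and from $\beta^I_{ii}\neq 0$ (links $I_i\to A_i$) — and $\mathcal R_0>1$ is equivalent to $s(F-V)>0$ (with $s(\cdot)$ the spectral abscissa) by Lemma~\ref{hyp} and its companion statements in \cite{van2008r0}; by continuity, $s(F_{g(\eps_0)}-V)>0$ once $\eps_0$ is small. Let $\omega^{T}>0$ be the left Perron eigenvector of $F_{g(\eps_0)}-V$, strictly positive by the Perron--Frobenius theorem for irreducible Metzler matrices. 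Taking the inner product of $\dot w\ge (F_{g(\eps_0)}-V)w$ with $\omega^{T}$ gives $\frac{d}{dt}\bigl(\omega^{T}w\bigr)\ge \omega^{T}(F_{g(\eps_0)}-V)w=s(F_{g(\eps_0)}-V)\,\omega^{T}w$, and since $\omega^{T}w(t)>0$ this forces $\omega^{T}w(t)\to\infty$, contradicting $p(x(t))<\eps_0$. Fixing $\eps_0$ small enough that $S_{0,i}-g(\eps_0)>0$ for all $i$ and $s(F_{g(\eps_0)}-V)>0$ establishes uniform weak $p$-persistence, hence $\liminf_{t\to\infty}\sum_{i=1}^n\bigl(A_i(t)+I_i(t)\bigr)\ge \eps^{*}$ for some $\eps^{*}>0$ and every $x(0)\in\mathring\Gamma$.

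Finally I would upgrade this to the componentwise bound \eqref{pers}. Eventually $\sum_i(A_i+I_i)\ge\eps^{*}$ and $S_i\ge\sigma_i/2$; using the within-group terms $\beta^A_{ii}A_i$ and $\beta^I_{ii}I_i$ appearing in $\dot A_i$ together with the irreducibility of $[\beta^A_{ij}]$, each $A_i$ stays above a positive constant, and then $\dot I_i=\alpha A_i-(\delta_I+\mu_i)I_i$ forces the same for each $I_i$; with the susceptible bound this yields \eqref{pers} for a common $\eps$. I expect the crux to be the weak-persistence estimate above: the delicate points are verifying that $F-V$ is an irreducible Metzler matrix so that its left Perron eigenvector is strictly positive, controlling $S_i(t)$ uniformly in terms of $\eps_0$, and the equivalence $\mathcal R_0>1\iff s(F-V)>0$ together with its stability under the perturbation $g(\eps_0)$; the abstract weak-to-strong persistence theorem and the componentwise bootstrap are then routine.
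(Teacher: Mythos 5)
Your proof is correct in its essentials, but it takes a genuinely different route from the paper. The paper's argument is the ``soft'' one: it observes that the largest invariant set on $\partial\Gamma$ is the isolated singleton $\{x_0\}$, that $x_0$ is unstable when $\cR_0>1$ (Theorem~\ref{loc_stability}), and then invokes the Freedman--Ruan--Tang persistence theorem (together with the Li--Graef--Wang--Karsai style verification that the stable set of $x_0$ does not meet $\mathring\Gamma$) to conclude in three lines. Your argument is constructive and quantitative: you prove uniform \emph{weak} persistence of $p=\sum_i(A_i+I_i)$ by contradiction, using the comparison bound $S_i\ge S_{0,i}-g(\eps_0)$, the differential inequality $\dot w\ge (F_{g(\eps_0)}-V)w$, and the strictly positive left Perron eigenvector of the irreducible Metzler matrix $F_{g(\eps_0)}-V$ (whose spectral abscissa is positive for small $\eps_0$ since $\cR_0>1$ is equivalent to $s(F-V)>0$); you then upgrade to strong persistence via the standard weak-to-strong theorem on the compact invariant set $\Gamma$. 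What your approach buys is self-containedness and an explicit exponential repulsion rate from the disease-free boundary, reusing machinery already present in the paper (the comparison estimates from Theorem~\ref{DFE_global} and the matrices \eqref{mat_F}--\eqref{mat_V}); what it costs is the extra bookkeeping of the weak-to-strong upgrade and the componentwise bootstrap, which the paper's cited theorem delivers for free.

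One step you should flesh out is the final passage from $\liminf_{t\to\infty}\sum_i\bigl(A_i(t)+I_i(t)\bigr)\ge\eps^*$ to the componentwise bound \eqref{pers}. The within-group term alone gives $\dot A_i\ge \beta^A_{ii}S_iA_i-(\alpha+\delta_A+\mu_i)A_i$, whose net coefficient may well be negative, so it does not by itself keep $A_i$ away from zero; the community carrying the mass $\eps^*$ can also change with $t$. The correct argument propagates positivity along a directed path in the (strongly connected) transmission graph: from $\dot A_i\ge \beta^A_{ij}(\sigma_i/2)A_j-cA_i$ and $\dot A_j\ge -cA_j$ one gets $A_i(t+1)\ge C\,A_j(t)$ for a uniform $C>0$, and iterating over paths of length at most $2n$ yields $A_i(t+2n)\ge C'\max_j\{A_j(t),I_j(t)\}\ge C'\eps^*/(2n)$; the bound on $I_i$ then follows from $\dot I_i=\alpha A_i-(\delta_I+\mu_i)I_i$. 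This is routine, as you say, but it is the irreducibility path estimate, not the diagonal terms, that closes the argument.
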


\begin{proof}
From Theorem \ref{endemic} we know that DFE $x_0$ is the unique equilibrium of \eqref{sairs3_net} on $\partial \Gamma$, i.e., the largest invariant set on $\partial \Gamma$ is the singleton $\{ x_0\}$, which is isolated. If $\mathcal{R}_0>1$, we know from Theorem \ref{loc_stability} that $x_0$ is unstable. Then, by using \cite[Thm 4.3]{freedman1994uniform}, and similar arguments in \cite[Prop. 3.3]{li1999global}, we can assert that the instability of $x_0$ implies the uniform persistence of \eqref{sairs3_net}.
\end{proof}

\subsection{Global stability of the endemic equilibrium in the SAIR model}\label{GAS_SAIR}

In this section, we study the global asymptotic stability of the endemic equilibrium of the SAIR model, which describes the dynamic of a disease  which confers permanent immunity (i.e. $\gamma = 0$). 
The dynamic of an SAIR model of this type is described by the following system of equations: 
\begin{equation}\label{sair_net}
\begin{split}
    \frac{d S_i(t)}{dt} &= \mu_i -\sum_{j=1}^n \bigg(\beta^{A}_{ij} A_j(t) + \beta^{I}_{ij} I_j(t)\bigg)S_i(t) -(\mu_i + \nu_i ) S_i(t) , \\ 
      \frac{d A_i(t)}{dt} &=\sum_{j=1}^n \bigg(\beta^{A}_{ij} A_j(t) + \beta^{I}_{ij} I_j(t)\bigg)S_i(t)-(\alpha + \delta_A +\mu_i) A_i(t),  \\
     \frac{d I_i(t)}{dt} &= \alpha A_i(t) - (\delta_I + \mu_i)I_i(t), \qquad \qquad i=1,\dots,n.
\end{split}
\end{equation}
The basic reproduction number is derived by substituting $\gamma$ with $0$ in (\ref{R0_net}):
\begin{equation*}
    \cR_0 = \rho\left( \left(\left(\beta^A_{ij} + \dfrac{\alpha \beta^{I}_{ij}}{(\delta_I + \mu_i)}\right) \dfrac{\mu_i}{(\mu_i + \nu_i)(\alpha + \delta_A + \mu_i)} \right)_{i,j=1,\dots,n}  \right).
\end{equation*}
If $\mathcal{R}_0 >1$, system \eqref{sair_net} has a unique equilibrium in $\mathring \Gamma$, which satisfies
\begin{align}
    \mu_i &= \sum_{j=1}^{n} \left(\beta_{ij}^A A^*_j + \beta_{ij}^{I} I^*_j\right)S^*_i + (\mu_i + \nu_i) S_i^*,\label{eq_equations_eeS}\\
    (\alpha+\delta_A + \mu_i)A_i^* &= \sum_{j=1}^{n} \left(\beta_{ij}^A A^*_j + \beta_{ij}^{I} I^*_j\right)S^*_i,\label{eq_equations_eeA}\\
    \alpha A_i^* &= (\delta_I + \mu_i)I_i^*.\label{eq_equations_eeI}
 \end{align}
\begin{theorem}\label{thm:GAS_SAIR}
The endemic equilibrium $x^*$ is globally asymptotically stable in $\mathring \Gamma$ if $\mathcal{R}_0>1$.
\end{theorem}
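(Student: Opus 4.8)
The plan is to prove global asymptotic stability by the graph-theoretic Lyapunov-function method of Guo, Li and Shuai \cite{guo2006global,guo2008graph,li2010global}, fed by the uniform persistence of Theorem~\ref{uniform_per2}. Since $\mathcal{R}_0>1$, Theorem~\ref{uniform_per2} gives $\eps>0$ with $\liminf_{t\to\infty}\min_i\{S_i(t),A_i(t),I_i(t)\}\ge\eps$ for every solution with $x(0)\in\mathring\Gamma$; as $\Gamma$ is compact, the $\omega$-limit set of such a solution is then a nonempty compact invariant subset of $\mathring\Gamma$, where the logarithmic functions below are well defined. It therefore suffices to produce $Q:\mathring\Gamma\to\mathbb R$, of class $C^1$, with $\dot Q\le0$ along \eqref{sair_net}, positive definite at $x^*$, and such that the largest invariant set in $\{\dot Q=0\}$ equals $\{x^*\}$; LaSalle's invariance principle together with the positive definiteness then yields that $x^*$ is GAS in $\mathring\Gamma$.

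For the per-community functions, write $g(s)=s-1-\ln s\ge 0$ (with $g(1)=0$) and set, for positive weights $\theta_i$ to be chosen,
\[
Q_i=\bigl(S_i-S_i^*\ln S_i\bigr)+\bigl(A_i-A_i^*\ln A_i\bigr)+\theta_i\bigl(I_i-I_i^*\ln I_i\bigr).
\]
Differentiating $Q_i$ along \eqref{sair_net} and using the equilibrium identities \eqref{eq_equations_eeS}--\eqref{eq_equations_eeI} to eliminate $\mu_i$, $\alpha+\delta_A+\mu_i$ and $\delta_I+\mu_i$, one rewrites $\dot Q_i$ as a manifestly non-positive part --- namely $-(\mu_i+\nu_i)(S_i-S_i^*)^2/S_i$ minus a non-negative combination of $g$ evaluated at the standard mixed ratios $S_i^*/S_i$, $A_i^*A_jS_i/(A_iA_j^*S_i^*)$, $A_i^*I_jS_i/(A_iI_j^*S_i^*)$, $I_i^*A_i/(I_iA_i^*)$ --- plus a ``telescoping'' remainder
\[
\Psi_i=\sum_j\phi^A_{ij}\,g\!\left(\tfrac{A_j}{A_j^*}\right)+\sum_j\phi^I_{ij}\,g\!\left(\tfrac{I_j}{I_j^*}\right)-\Bigl(\textstyle\sum_j\bar\beta_{ij}-\theta_i\alpha A_i^*\Bigr)g\!\left(\tfrac{A_i}{A_i^*}\right)-\theta_i\alpha A_i^*\,g\!\left(\tfrac{I_i}{I_i^*}\right),
\]
where $\phi^A_{ij}=\beta^A_{ij}A_j^*S_i^*$, $\phi^I_{ij}=\beta^I_{ij}I_j^*S_i^*$, $\bar\beta_{ij}=\phi^A_{ij}+\phi^I_{ij}$, and where I have used \eqref{eq_equations_eeA} in the form $(\alpha+\delta_A+\mu_i)A_i^*=\sum_j\bar\beta_{ij}$.

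The decisive step is the choice of the community weights. Let $B=(\bar\beta_{ij})_{i,j}$. By Assumption~\ref{assump} and the interior positivity of $x^*$ (Theorem~\ref{endemic}), $B$ is irreducible, so the cofactors $c_i>0$ of the diagonal entries of its Laplacian satisfy $\sum_i c_i\bar\beta_{ij}=c_j\sum_k\bar\beta_{jk}$ for every $j$. Now fix $\theta_i:=\bigl(\sum_k c_k\phi^I_{ki}\bigr)\big/\bigl(c_i\alpha A_i^*\bigr)>0$. With $Q:=\sum_i c_iQ_i$, the symptomatic part of the force of infection is cancelled exactly by the $I$-compartment terms: in $\sum_i c_i\Psi_i$ the coefficient of each $g(I_m/I_m^*)$ is $\sum_i c_i\phi^I_{im}-c_m\theta_m\alpha A_m^*=0$ by the definition of $\theta_m$, and, feeding this back, the coefficient of each $g(A_m/A_m^*)$ collapses to $\sum_i c_i\bar\beta_{im}-c_m\sum_k\bar\beta_{mk}=0$ by the balance identity. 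Hence $\sum_i c_i\Psi_i=0$ and $\dot Q=\sum_i c_i\dot Q_i\le0$ on $\mathring\Gamma$. Finally $\dot Q=0$ forces $S_i=S_i^*$ for all $i$, whence all the mixed ratios above equal $1$; by irreducibility of $B$ this forces $A_i/A_i^*=I_i/I_i^*$ to equal a common constant $\kappa$ across $i$, and substituting into $\dot S_i=0$ in \eqref{sair_net} forces $\kappa=1$. So $\{x^*\}$ is the largest invariant subset of $\{\dot Q=0\}$, and GAS of $x^*$ in $\mathring\Gamma$ follows.

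I expect the main obstacle to be exactly the coexistence of two a priori unrelated transmission matrices $[\beta^A_{ij}]$ and $[\beta^I_{ij}]$: a naive weight $\theta_i$ would force two incompatible balance conditions, one per matrix. The way out --- run the graph-theoretic machinery on the single \emph{total} equilibrium-flux matrix $\bar\beta_{ij}=(\beta^A_{ij}A_j^*+\beta^I_{ij}I_j^*)S_i^*$, whose irreducibility follows from that of $[\beta^A_{ij}]$, and tune $\theta_i$ precisely so that the $\beta^I$-part of the infection flux is absorbed into the $I$-equation --- is what allows a single balance identity to annihilate $\sum_i c_i\Psi_i$; verifying that last cancellation, i.e.\ the reorganisation of $\dot Q_i$ into the form above plus the two coefficient computations, is the computational heart of the proof.
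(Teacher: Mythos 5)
Your proof is correct, and while it belongs to the same family as the paper's argument (a graph-theoretic Lyapunov function built from Volterra terms $x-x^*\ln x$, fed by the uniform persistence of Theorem~\ref{uniform_per2} and closed by LaSalle), the combinatorial core is executed along a genuinely different route. The paper keeps the $(S_i,A_i)$ blocks and the $I_i$ blocks as \emph{separate} vertices of a $2n$-node weighted digraph with weights $\tilde\beta_{ij}=\beta^A_{ij}A_j^*S_i^*$, $\tilde\beta_{i\,n+j}=\beta^I_{ij}I_j^*S_i^*$ and $\tilde\beta_{n+i\,i}=\alpha A_i^*$, verifies the cycle condition $\sum_{(s,r)\in\mathcal E(\mathcal C)}G_{rs}=0$ on every directed cycle, and then obtains the $2n$ constants abstractly from \cite[Prop. 3.1, Thm 3.5]{Shuai2013Lyapunov}. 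You instead collapse to an $n$-node digraph weighted by the total equilibrium flux $\bar\beta_{ij}=(\beta^A_{ij}A_j^*+\beta^I_{ij}I_j^*)S_i^*$, absorb the $I$-compartment into each node through the tuned weight $\theta_i=\bigl(\sum_k c_k\phi^I_{ki}\bigr)/(c_i\alpha A_i^*)$, and use only the Kirchhoff balance identity $\sum_i c_i\bar\beta_{ij}=c_j\sum_k\bar\beta_{jk}$ for the $n$ cofactors. I verified the reorganisation of $\dot Q_i$ into the stated form and the two coefficient computations: the coefficient of $g(I_m/I_m^*)$ vanishes by the definition of $\theta_m$, and feeding this back makes the coefficient of $g(A_m/A_m^*)$ equal to $\sum_i c_i\bar\beta_{im}-c_m\sum_k\bar\beta_{mk}=0$; moreover $\theta_i>0$ because $\beta^I_{ii}\neq0$ by Assumption~\ref{assump}, so the relation $\tilde a_i=\tilde i_i$ is indeed available in the LaSalle step, and the identification of $\{x^*\}$ as the largest invariant set goes through via the irreducibility of $(\bar\beta_{ij})$. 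What each approach buys: yours makes the cancellation fully explicit on a smaller graph and shows transparently why the coexistence of the two transmission matrices $[\beta^A_{ij}]$ and $[\beta^I_{ij}]$ causes no obstruction, at the cost of having to guess the weights $\theta_i$; the paper's doubled-graph formulation avoids that guess but delegates the cancellation to the cited black-box theorem after checking the cycle condition. You are also somewhat more careful than the written proof in justifying the application of LaSalle on the open set $\mathring\Gamma$ through uniform persistence and compactness of the $\omega$-limit set.
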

\begin{proof}
In order to prove the statement, we use a graph-theoretic approach as in \cite{Shuai2013Lyapunov} to establish the existence of a Lyapunov function. Let us define
$$\Tilde{s}_i = \frac{S_i}{S_i^*}, \qquad \Tilde{a}_i = \frac{A_i}{A_i^*}, \qquad \Tilde{i}_i = \frac{I_i}{I_i^*}, $$ 
and $g(x) := x - 1 -\ln(x) \ge 0$ for all $x>0$.
Let
$V_i = V_{i,1} + V_{i,2},$   
where 
$V_{i,1} = S_i^*\cdot g\left( \tilde{s}_i\right), V_{i,2} = A_i^* \cdot g\left(\tilde{a}_i\right),$
and 
$V_{n+i}=I_i^*\cdot g(\t i_i)$, for $i=1, \ldots, n$.

Define $h(x) := -g(x) -1 = -x + \ln(x)$ and note that 
\begin{equation}\label{fun_g}
\left(1- \frac{1}{x}\right)(x-1) = -2 + x + \frac{1}{x} =-1+x-\ln{x}-1+\frac{1}{x}-\ln{\frac{1}{x}}= g(x) + g\left(\frac{1}{x}\right).
\end{equation}
Substituting \eqref{eq_equations_eeS}, \eqref{eq_equations_eeA}, and \eqref{eq_equations_eeI} in \eqref{sair_net}, we obtain
\begin{align*}
        \frac{d S_i(t)}{dt} &= -S_i^* (\mu_i + \nu_i)(\Tilde{s}_i - 1) + \sum_{j=1}^n \left(\beta_{ij}^A\left( A_j^* S_i^* -A_j S_i \right) +\beta_{ij}^I\left( I_j^* S_i^* -I_j S_i \right)\right), \\ 
      \frac{d A_i(t)}{dt} &= \sum_{j=1}^{n} \bigg( \left(\beta_{ij}^A A_j+\beta_{ij}^I I_j\right) S_i - \left(\beta_{ij}^A A^*_j+\beta_{ij}^I I^*_j\right) S^*_i \frac{A_i}{A_i^*}\bigg) ,  \\
     \frac{d I_i(t)}{dt} &= \alpha \left(A_i - A_i^* \frac{I_i}{I_i^*}\right), \qquad \qquad i=1,\dots,n.
\end{align*}
For $i=1, \ldots, n$, differentiating $V_i$ along the solutions of \eqref{sair_net} and using \eqref{fun_g}, we have
\begin{equation}\label{dV1}
    \begin{split}
        \frac{dV_{i,1}}{dt} =& \left(1- \frac{1}{\Tilde{s}_i}\right)   \frac{d S_i(t)}{dt} \\
        =& \left(1- \frac{1}{\Tilde{s}_i}\right) \left[-S_i^* (\mu_i + \nu_i)(\Tilde{s}_i - 1) + \sum_{j=1}^n \left(\beta_{ij}^A\left( A_j^* S_i^* -A_j S_i \right) +\beta_{ij}^I\left( I_j^* S_i^* -I_j S_i \right)\right)\right] \\
        =& \left(1- \frac{1}{\Tilde{s}_i}\right) \left[-S_i^* (\mu_i + \nu_i)(\Tilde{s}_i - 1) + \sum_{j=1}^n \left(\beta_{ij}^A  A_j^* S_i^*\left(1 -\tilde{a}_j \tilde{s}_i \right) +\beta_{ij}^II_j^* S_i^* \left( 1-\tilde{i}_j \tilde{s}_i \right)\right)\right] \\
        =& -S_i^* (\mu_i + \nu_i) \frac{\left( \tilde{s}_i-1\right)^2}{S_i} + \sum_{j=1}^{n} \left(\beta_{ij}^A A_j^* S_i^* \left(1- \tilde{a}_j \tilde{s}_i - \frac{1}{\tilde{s}_i} + \tilde{a}_j\right)\right.\\
        &\left.+\beta_{ij}^I I_j^* S_i^* \left(1- \tilde{i}_j \tilde{s}_i - \frac{1}{\tilde{s}_i} + \tilde{i}_j\right)  \right),
    \end{split}
\end{equation}

\begin{equation}\label{dV2}
      \begin{split}
        \frac{dV_{i,2}}{dt} &= \left(1- \frac{1}{\Tilde{a}_i}\right)   \frac{d A_i(t)}{dt} \\
        &= \left(1- \frac{1}{\Tilde{a}_i}\right) \left[ \sum_{j=1}^{n} \bigg( \left(\beta_{ij}^A A_j+\beta_{ij}^I I_j\right) S_i - \left(\beta_{ij}^A A^*_j+\beta_{ij}^I I^*_j\right) S^*_i \frac{A_i}{A_i^*}\bigg) \right]\\
        &= \left(1- \frac{1}{\Tilde{a}_i}\right)  \left[\sum_{j=1}^{n} \bigg( \beta_{ij}^A A_j^* S_i^* \left( \tilde{a}_j \tilde{s}_i - \tilde{a}_i\right) + \beta_{ij}^I I_j^* S_i^* \left(\tilde{i}_j \tilde{s}_i - \tilde{a}_i\right) \bigg) \right] \\
        &= \sum_{j=1}^{n} \left(\beta_{ij}^A A_j^* S_i^* \left(\tilde{a}_j \tilde{s}_i - \tilde{a}_i - \frac{\tilde{a}_j \tilde{s}_i }{\tilde{a}_i} + 1 \right) + \beta_{ij}^I I_j^* S_i^* \left(\tilde{i}_j \tilde{s}_i - \tilde{a}_i - \frac{\tilde{i}_j \tilde{s}_i }{\tilde{a}_i} + 1 \right)\right) ,
    \end{split}
\end{equation}
Thus, from \eqref{dV1} and \eqref{dV2}, we obtain
\begin{equation}
    \begin{split}
        \frac{dV_i}{dt} \leq & \sum_{j=1}^n \left(\beta_{ij}^A A_j^* S_i^*\left(2 -\frac{1}{\tilde{s}_i} + \tilde{a}_j - \tilde{a}_i - \frac{\tilde{a}_j\tilde{s}_i}{\tilde{a}_i} \right) + \beta_{ij}^I I_j^* S_i^* \left(2 -\frac{1}{\tilde{s}_i} + \tilde{i}_j - \tilde{a}_i - \frac{\tilde{i}_j\tilde{s}_i}{\tilde{a}_i} \right) \right) 
    \end{split}
\end{equation}
Using the fact that $1- x \leq - \ln(x)$, we can write
\begin{equation*}
\begin{split}
2 -\frac{1}{\tilde{s}_i} + \tilde{a}_j - \tilde{a}_i - \frac{\tilde{a}_j\tilde{s}_i}{\tilde{a}_i} &\leq \tilde{a}_j - \tilde{a_i} -\ln\left(\frac{1}{\tilde{s}_i}\right) - \ln\left(\frac{\tilde{a}_j\tilde{s}_i}{\tilde{a}_i}\right)= h(\tilde{a}_i) - h(\tilde{a}_j),\\
2 -\frac{1}{\tilde{s}_i} + \tilde{i}_j - \tilde{a}_i - \frac{\tilde{i}_j\tilde{s}_i}{\tilde{a}_i} &\leq \tilde{i}_j - \tilde{a_i} -\ln\left(\frac{1}{\tilde{s}_i}\right) - \ln\left(\frac{\tilde{i}_j\tilde{s}_i}{\tilde{a}_i}\right)= h(\tilde{a}_i) - h(\tilde{i}_j),\\
\end{split}
\end{equation*}
Thus, we obtain
\begin{equation*}
    \begin{split}
        \frac{dV_i}{dt} &\le \sum_{j=1}^n \left(\beta_{ij}^A A_j^* S_i^* (h(\tilde{a}_i) - h(\tilde{a}_j)) + \beta_{ij}^I I_j^* S_i^* (h(\tilde{a}_i) - h(\tilde{i_j})) \right) =: \sum_{j=1}^{2n} \tilde{\beta}_{ij} G_{i,j},
    \end{split}
\end{equation*}
    where 
    \begin{equation*}
    \tilde{\beta}_{ij} = \left\lbrace \begin{split}
        \beta_{ij}^A A_j^* S_i^*, & \quad 1\leq j \leq n,\\
        \beta_{i\;{j-n}}^I I_{j-n}^* S_i^*, & \quad n+1 \leq j \leq 2n,
    \end{split} \right. \qquad \text{and} \qquad G_{ij} = \left\lbrace \begin{split}
        h(\tilde{a}_i) - h(\tilde{a}_j), & \quad 1\leq j \leq n,\\
        h(\tilde{a}_i) - h(\tilde{i}_{j-n}), & \quad n+1 \leq j \leq 2n.
    \end{split} \right.
    \end{equation*}
Moreover, for all $i=1, \ldots, n$    
\begin{equation}\label{dV3}
      \begin{split}
       \frac{dV_{n+i}}{dt} &=\left(1- \frac{1}{\Tilde{i}_i}\right)  \frac{dI_{i}}{dt}=  \alpha \left(1- \frac{1}{\Tilde{i}_i}\right) \left[ A_i - A_i^* \frac{I_i}{I_i^*}\right] \\
        &=  \alpha A_i^*\left(1- \frac{1}{\Tilde{i}_i}\right) \left(\tilde{a}_i - \tilde{i}_i \right)=  \alpha A_i^* \left( \tilde{a}_i - \tilde{i}_i - \frac{\tilde{a}_i}{\tilde{i}_i}  + 1 \right),
    \end{split}
\end{equation}    
and again, using the fact that $1- x \leq - \ln(x)$, we have

    $$1 + \tilde{a}_i - \tilde{i}_i - \frac{\tilde{a}_i}{\tilde{i}_i}  \leq \tilde{a}_i - \tilde{i}_i - \ln\left(\frac{\tilde{a}_i}{\tilde{i}_i}\right) = h(\tilde{i}_i) - h(\tilde{a}_i).$$
Thus,
\begin{equation}\label{dV3leq}
\frac{dV_{n+i}}{dt} \leq \alpha A_i^* (h(\tilde{i}_i) - h(\tilde{a}_i)) =: \t \beta_{n+i\;i} G_{n+i\;i}.
\end{equation}

\begin{figure}
    \centering

\tikzset{every picture/.style={line width=0.75pt}} 

\begin{tikzpicture}[x=0.75pt,y=0.75pt,yscale=-1,xscale=1]

\draw   (146,76) .. controls (146,62.19) and (157.19,51) .. (171,51) .. controls (184.81,51) and (196,62.19) .. (196,76) .. controls (196,89.81) and (184.81,101) .. (171,101) .. controls (157.19,101) and (146,89.81) .. (146,76) -- cycle ;
\draw   (146,176) .. controls (146,162.19) and (157.19,151) .. (171,151) .. controls (184.81,151) and (196,162.19) .. (196,176) .. controls (196,189.81) and (184.81,201) .. (171,201) .. controls (157.19,201) and (146,189.81) .. (146,176) -- cycle ;
\draw   (303,76.14) .. controls (303,62.34) and (314.19,51.14) .. (328,51.14) .. controls (341.81,51.14) and (353,62.34) .. (353,76.14) .. controls (353,89.95) and (341.81,101.14) .. (328,101.14) .. controls (314.19,101.14) and (303,89.95) .. (303,76.14) -- cycle ;
\draw   (303,176.14) .. controls (303,162.34) and (314.19,151.14) .. (328,151.14) .. controls (341.81,151.14) and (353,162.34) .. (353,176.14) .. controls (353,189.95) and (341.81,201.14) .. (328,201.14) .. controls (314.19,201.14) and (303,189.95) .. (303,176.14) -- cycle ;
\draw    (328,201.14) .. controls (286.42,223.91) and (219.36,223.16) .. (172.42,201.66) ;
\draw [shift={(171,201)}, rotate = 25.23] [color={rgb, 255:red, 0; green, 0; blue, 0 }  ][line width=0.75]    (10.93,-3.29) .. controls (6.95,-1.4) and (3.31,-0.3) .. (0,0) .. controls (3.31,0.3) and (6.95,1.4) .. (10.93,3.29)   ;
\draw    (328,51.14) .. controls (289.39,25.4) and (211.58,26.13) .. (172.18,50.26) ;
\draw [shift={(171,51)}, rotate = 327.49] [color={rgb, 255:red, 0; green, 0; blue, 0 }  ][line width=0.75]    (10.93,-3.29) .. controls (6.95,-1.4) and (3.31,-0.3) .. (0,0) .. controls (3.31,0.3) and (6.95,1.4) .. (10.93,3.29)   ;
\draw    (146,76) .. controls (130.32,116.32) and (134.81,149.78) .. (145.35,174.5) ;
\draw [shift={(146,176)}, rotate = 246.13] [color={rgb, 255:red, 0; green, 0; blue, 0 }  ][line width=0.75]    (10.93,-3.29) .. controls (6.95,-1.4) and (3.31,-0.3) .. (0,0) .. controls (3.31,0.3) and (6.95,1.4) .. (10.93,3.29)   ;
\draw    (196,76) -- (301,76.14) ;
\draw [shift={(303,76.14)}, rotate = 180.08] [color={rgb, 255:red, 0; green, 0; blue, 0 }  ][line width=0.75]    (10.93,-3.29) .. controls (6.95,-1.4) and (3.31,-0.3) .. (0,0) .. controls (3.31,0.3) and (6.95,1.4) .. (10.93,3.29)   ;
\draw    (196,176) -- (301,176.14) ;
\draw [shift={(303,176.14)}, rotate = 180.08] [color={rgb, 255:red, 0; green, 0; blue, 0 }  ][line width=0.75]    (10.93,-3.29) .. controls (6.95,-1.4) and (3.31,-0.3) .. (0,0) .. controls (3.31,0.3) and (6.95,1.4) .. (10.93,3.29)   ;
\draw    (171,151) -- (171,103) ;
\draw [shift={(171,101)}, rotate = 90] [color={rgb, 255:red, 0; green, 0; blue, 0 }  ][line width=0.75]    (10.93,-3.29) .. controls (6.95,-1.4) and (3.31,-0.3) .. (0,0) .. controls (3.31,0.3) and (6.95,1.4) .. (10.93,3.29)   ;
\draw    (303,76.14) .. controls (279.24,134.55) and (252.54,165.52) .. (197.67,175.7) ;
\draw [shift={(196,176)}, rotate = 350.02] [color={rgb, 255:red, 0; green, 0; blue, 0 }  ][line width=0.75]    (10.93,-3.29) .. controls (6.95,-1.4) and (3.31,-0.3) .. (0,0) .. controls (3.31,0.3) and (6.95,1.4) .. (10.93,3.29)   ;
\draw    (303,176.14) .. controls (299.04,87.04) and (243.13,88.11) .. (197.38,76.36) ;
\draw [shift={(196,76)}, rotate = 14.79] [color={rgb, 255:red, 0; green, 0; blue, 0 }  ][line width=0.75]    (10.93,-3.29) .. controls (6.95,-1.4) and (3.31,-0.3) .. (0,0) .. controls (3.31,0.3) and (6.95,1.4) .. (10.93,3.29)   ;

\draw (168,66.4) node [anchor=north west][inner sep=0.75pt]  [font=\small]  {$i$};
\draw (167,166.4) node [anchor=north west][inner sep=0.75pt]  [font=\small]  {$j$};
\draw (311,67.4) node [anchor=north west][inner sep=0.75pt]  [font=\small]  {$i\ +\ n$};
\draw (311,166.4) node [anchor=north west][inner sep=0.75pt]  [font=\small]  {$j\ +\ n$};
\draw (137.76,125.98) node [anchor=north west][inner sep=0.75pt]  [font=\scriptsize]  {$\tilde{\beta}_{ji}$};
\draw (174.39,125.98) node [anchor=north west][inner sep=1pt]  [font=\scriptsize]  {$\tilde{\beta}_{ij}$};
\draw (232.23,35.09) node [anchor=north west][inner sep=0.5pt]  [font=\scriptsize,rotate=-0.9]  {$\tilde{\beta}_{i\;i+n}$};
\draw (240.23,61.09) node [anchor=north west][inner sep=0.75pt]  [font=\scriptsize,rotate=-0.9]  {$\tilde{\beta}_{i+n\;i}$};
\draw (225.23,199.09) node [anchor=north west][inner sep=0.75pt]  [font=\scriptsize,rotate=-0.9]  {$\tilde{\beta}_{j\;j+n}$};
\draw (240.23,179.09) node [anchor=north west][inner sep=0.75pt]  [font=\scriptsize,rotate=-0.9]  {$\tilde{\beta}_{j+n\;j}$};
\draw (222.32,150.48) node [anchor=north west][inner sep=0.75pt]  [font=\scriptsize,rotate=-330.89]  {$\tilde{\beta}_{j\;i+n}$};
\draw (226.71,85.82) node [anchor=north west][inner sep=0.75pt]  [font=\scriptsize,rotate=-14.82]  {$\tilde{\beta}_{i\;j+n}$};

\end{tikzpicture}

\caption{The weighted digraph $(G,\tilde B)$ constructed for system \eqref{sair_net}.}
\label{fig:digraph}
\end{figure}
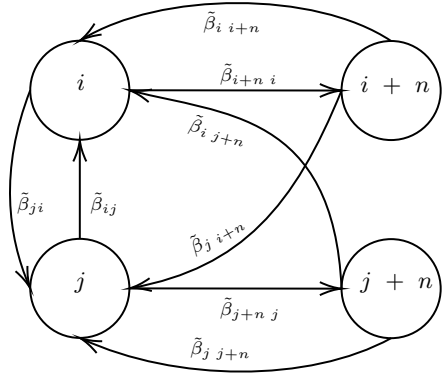

We can construct a weighted digraph $\mathcal{G}$, associated with the weight matrix $\tilde{B} = (\tilde{\beta}_{ij})_{i,j=1,\dots,2n}$, with $\tilde{\beta}_{ij} > 0$ as defined above and zero otherwise; see Figure \ref{fig:digraph}. Let us note that, from Assumption \ref{assump}, the digraph $(\mathcal{G},\tilde{B})$ is strongly connected. 
Since $G_{i\;n+j}+G_{n+j\;j}=-\t a_i +\ln (\t a_i)+\t a_j -\ln (\t a_j)=G_{ij}$, $i,j=1, \ldots, n$, it can be verified that each directed cycle $\cC$ of $(\mathcal{G},\tilde{B})$ has $\sum _{(s,r) \in \mathcal{E}(\mathcal{C})} G_{rs}=0$, where $\mathcal{E}(\mathcal{C})$ denotes the arc set of the directed cycle $\cC$.
Thus, the assumptions of \cite[Theorem 3.5]{Shuai2013Lyapunov} hold, hence the function
$$ V = \sum_{i=1}^n \left(c_i V_i+ c_{n+i} V_{n+i}\right),$$
for constants $c_i>0$ defined as in \cite[Prop. 3.1]{Shuai2013Lyapunov}, satisfies $\frac{dV}{dt} \leq 0$, meaning that $V$
is a Lyapunov function for system \eqref{sair_net}.  
It can be verified that the largest compact invariant set in which $\frac{dV}{dt} = 0$ is the singleton $\{x^*\}$. Hence, our claim follows by LaSalle’s Invariance Principle \cite{la1976stability}.
\end{proof}

\begin{remark}
We observe that the proof of Theorem \ref{thm:GAS_SAIR} also holds for the case $\delta_A=0$ in system \eqref{sairs3_net}. That is to say, for a model with two stages of infection $I_1$ and $I_2$, in which from the first class of infection one passes to the second at the rate $\alpha$ and one cannot directly pass into the compartment of recovered individuals. Then, from the second stage of infection, one can recover at the rate $\delta_{I_2}$. It is known that, if $\alpha=\delta_{I_2}$, the length of the infectious period follows a gamma distribution; otherwise, the resulting distribution is not a standard one.
Moreover, we remark that Theorem \ref{thm:GAS_SAIR} only requires $\mathcal{R}_0>1$, and no additional conditions on the parameters, despite the complexity of the model under study. Models with multiple infected compartments have been studied, e.g., in  \cite{moghadas2002global,sherborne2016compact,wang2014global} . 
\end{remark}

\subsection{Global stability of the SAIRS model when $\delta_A = \delta_I =: \delta$}\label{GAS_SAIRS}

In the $\delta_A = \delta_I =: \delta$ case, from (\ref{R0_net}) we have
$$\cR_0=\rho\left( \left(\left(\beta^A_{ij} + \dfrac{\alpha \beta^{I}_{ij}}{(\delta + \mu_i)}\right) \dfrac{\gamma +\mu_i}{(\gamma +\mu_i + \nu_i)(\alpha + \delta + \mu_i)} \right)_{i,j=1,\dots,n}  \right).$$
If $\mathcal{R}_0 >1$, system \eqref{sairs_net} with $\delta_A=\delta_I=:\delta$ has a unique equilibrium in $\mathring \Gamma$, which satisfies
\begin{align}
\begin{split}\label{equlibrium_eq}
    \mu_i &= \sum_{j=1}^{n} \left(\beta_{ij}^A A^*_j + \beta_{ij}^{I} I^*_j\right)S^*_i + (\mu_i + \nu_i) S_i^* -\gamma R_i^*,\\
    (\alpha+\delta + \mu_i)A_i^* &= \sum_{j=1}^{n}\left(\beta_{ij}^A A^*_j + \beta_{ij}^{I} I^*_j\right)S^*_i,\\
    \alpha A_i^* &= (\delta + \mu_i)I_i^*,\\
    \nu_i S_i^*&= -\delta(A_i^*+I_i^*)+(\gamma +\mu_i)R_i^*.
    \end{split}
\end{align}

\begin{theorem}
Assume that ${(\mu_i+\nu_i)}S_i^* \geq {\gamma R_i^*}$ and $\delta > \nu_i$, for each $i=1, \ldots, n$. Then, the endemic equilibrium $x^*$ is globally asymptotically stable in $\mathring \Gamma$ if $\mathcal{R}_0>1$.
\end{theorem}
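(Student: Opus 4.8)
The plan is to extend the graph‑theoretic Lyapunov construction of Li and Shuai used in Theorem~\ref{thm:GAS_SAIR}, now keeping track of the recovered compartment. I would work with the $4n$‑dimensional system \eqref{sairs_net} (with $\delta_A=\delta_I=\delta$) rather than the reduced one, so that the feedback term $\gamma R_i$ enters $\dot S_i$ linearly and $\dot R_i$ keeps the simple form $\delta(A_i+I_i)+\nu_i S_i-(\gamma+\mu_i)R_i$; using the reduced system and the term $\gamma(1-A_i-I_i)$ instead produces cross‑terms that are not controlled by the usual logarithmic estimates. Introduce $\tilde s_i=S_i/S_i^*$, $\tilde a_i=A_i/A_i^*$, $\tilde i_i=I_i/I_i^*$, $\tilde r_i=R_i/R_i^*$, the functions $g(x)=x-1-\ln x$ and $h(x)=-g(x)-1$, and use the equilibrium relations \eqref{equlibrium_eq} to put every right‑hand side of \eqref{sairs_net} into deviation form. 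For each group define a Lyapunov component $V_i=S_i^*\,g(\tilde s_i)+A_i^*\,g(\tilde a_i)+I_i^*\,g(\tilde i_i)+\rho_i R_i^*\,g(\tilde r_i)$ with a weight $\rho_i>0$ to be fixed later, and a weight matrix built from the coefficients $\beta^A_{ij}A_j^*S_i^*$ and $\beta^I_{ij}I_j^*S_i^*$; this matrix is irreducible by Assumption~\ref{assump}, so the associated digraph is strongly connected.

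Differentiating along \eqref{sairs_net}, the cross‑infection terms are handled exactly as in Theorem~\ref{thm:GAS_SAIR}: pairing $(1-1/\tilde s_i)\dot S_i$ with $(1-1/\tilde a_i)\dot A_i$, the diagonal $\beta^A_{ii}$ contribution collapses to $-\beta^A_{ii}A_i^*S_i^*(\tilde s_i-1)^2/\tilde s_i$, while the remaining pieces are bounded, using $1-x\le-\ln x$, by the arc functions $h(\tilde a_i)-h(\tilde a_j)$ and $h(\tilde a_i)-h(\tilde i_j)$, whose sums around any directed cycle vanish by the telescoping identity generalising $G_{i\;n+j}+G_{n+j\;j}=G_{ij}$ (with analogous identities for the new arcs feeding $R_i$). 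The genuinely new terms are: (i) the feedback $\gamma R_i^*(1-1/\tilde s_i)(\tilde r_i-1)$, which via $(1-1/\tilde s_i)(\tilde r_i-1)=(1-1/\tilde s_i)(\tilde r_i-\tilde s_i)+(\tilde s_i-1)^2/\tilde s_i$ splits into an arc term $\le\gamma R_i^*\big(h(\tilde s_i)-h(\tilde r_i)\big)$ plus a leftover $\gamma R_i^*(\tilde s_i-1)^2/\tilde s_i$; and (ii) $\rho_i(1-1/\tilde r_i)\dot R_i$, bounded by $\rho_i\delta A_i^*\big(h(\tilde r_i)-h(\tilde a_i)\big)+\rho_i\delta I_i^*\big(h(\tilde r_i)-h(\tilde i_i)\big)+\rho_i\nu_i S_i^*\big(h(\tilde r_i)-h(\tilde s_i)\big)$. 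The non‑arc residue collected at group $i$ is $\big[\gamma R_i^*-(\mu_i+\nu_i)S_i^*-\beta^A_{ii}A_i^*S_i^*\big](\tilde s_i-1)^2/\tilde s_i$, which is nonpositive precisely because of the hypothesis $(\mu_i+\nu_i)S_i^*\ge\gamma R_i^*$; the hypothesis $\delta>\nu_i$ is what allows the weights $\rho_i$ (together with the constants furnished by the digraph) to be chosen so that the two $S_i\leftrightarrow R_i$ arc terms close up consistently and the whole arc part of $\dot V$ is subsumed by the cycle condition.

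With these ingredients, Theorem~3.5 of \cite{Shuai2013Lyapunov} provides constants $c_i>0$ for which $V=\sum_{i=1}^n c_i V_i$ satisfies $\dot V\le0$ on $\mathring\Gamma$; since $g(x)\to\infty$ as $x\to0^+$, the sublevel sets of $V$ are compact subsets of $\mathring\Gamma$, so (also by Theorem~\ref{uniform_per2}) solutions starting in $\mathring\Gamma$ remain in such a set, and one checks that the largest invariant set contained in $\{\dot V=0\}$ is the singleton $\{x^*\}$, whence global asymptotic stability follows from LaSalle's invariance principle. The main obstacle is exactly the recovered compartment: unlike the pure cascade $S\to A\to I$ of the SAIR model, the loop $S_i\to R_i\to S_i$ (through vaccination and waning immunity) is not a transmission arc, and forcing it, together with the $A_i,I_i\to R_i$ flows, into the graph‑theoretic framework while keeping every non‑arc remainder with the correct sign is what both technical hypotheses are there to make possible — and it is the reason the unrestricted case $\delta_A\neq\delta_I$ is left open.
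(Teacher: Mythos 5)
Your overall architecture (graph-theoretic Lyapunov function for the transmission part, an extra per-group component for the recovered compartment, and the two hypotheses playing exactly the roles you assign them) matches the paper's intent, and your treatment of the feedback term $\gamma R_i^*(1-1/\tilde s_i)(\tilde r_i-1)$ — splitting off $(\tilde s_i-1)^2/\tilde s_i$ and absorbing it with $(\mu_i+\nu_i)S_i^*\ge\gamma R_i^*$ — is sound. The genuine gap is in how you close the recovered compartment. You add a logarithmic term $\rho_i R_i^*\,g(\tilde r_i)$ and try to fold the flows $A_i,I_i,S_i\to R_i$ and $R_i\to S_i$ into the digraph as arcs carrying $h(\tilde r_i)-h(\tilde a_i)$, $h(\tilde r_i)-h(\tilde i_i)$, $h(\tilde r_i)-h(\tilde s_i)$ and $h(\tilde s_i)-h(\tilde r_i)$. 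These arcs introduce two new potentials, $h(\tilde s_i)$ and $h(\tilde r_i)$, that must net out to zero. Matching the $h(\tilde s_i)$ coefficients forces $\rho_i=\gamma R_i^*/(\nu_i S_i^*)$ (already problematic when $\nu_i=0$), and then the net coefficient of $h(\tilde r_i)$ is $\rho_i(\gamma+\mu_i)R_i^*-\gamma R_i^*$, which is strictly positive at the endemic equilibrium since $(\gamma+\mu_i)R_i^*=\nu_i S_i^*+\delta(A_i^*+I_i^*)>\nu_i S_i^*$; equivalently, the cycle condition of \cite[Thm 3.5]{Shuai2013Lyapunov} fails on the two-cycle between the $(S_i,A_i)$-node and the $R_i$-node because the outgoing arc from $R_i$ carries the potential $h(\tilde s_i)$ while the incoming arcs carry a mixture of $h(\tilde a_i)$, $h(\tilde i_i)$ and $h(\tilde s_i)$. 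The sentence claiming that $\delta>\nu_i$ ``allows the weights $\rho_i$ to be chosen so that the arc terms close up consistently'' is precisely the step that does not go through, and no choice of $\rho_i$ repairs it. A further warning sign: nothing in your sketch actually uses $\delta_A=\delta_I$, yet the paper leaves $\delta_A\ne\delta_I$ open.

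The missing idea is to abandon the logarithmic form for $R$ altogether. The paper takes the \emph{quadratic} component $W_i=\frac{\gamma}{S_i^*(\delta-\nu_i)}\frac{(R_i-R_i^*)^2}{2}$ and, crucially, uses the conservation law $A_i+I_i=1-S_i-R_i$ (this is where $\delta_A=\delta_I$ enters: only then does $\dot R_i$ depend on $A_i+I_i$ alone) to rewrite
\begin{equation*}
\frac{dR_i}{dt}=(\nu_i-\delta)(S_i-S_i^*)-(\gamma+\mu_i+\delta)(R_i-R_i^*),
\end{equation*}
so that $A_i$ and $I_i$ are eliminated from the $R$-equation entirely and no new arcs are needed. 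This yields $\dot W_i\le-\gamma R_i^*(\tilde s_i-1)(\tilde r_i-1)$ (here $\delta>\nu_i$ gives both positivity of $W_i$ and the correct sign), which combines with the feedback term in $\dot V_{i,1}$ to give $-\gamma R_i^*(\tilde r_i-1)(1-1/\tilde s_i)(\tilde s_i-1)$, and the hypothesis $(\mu_i+\nu_i)S_i^*\ge\gamma R_i^*$ then makes the whole non-arc residue nonpositive. The digraph and the constants $c_i$ are literally the same as in the SAIR case. I recommend you rework the $R$-component along these lines; the rest of your argument then goes through.
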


\begin{proof}
Let $\t s_i, \t a_i, \t i_i$, $V_i$, and $V_{n+i}$ as in Theorem \ref{thm:GAS_SAIR}. Let us define {$\t r_i=\frac{R_i}{R_i^*}$} and 
$$W_i=\frac{\gamma}{S_i^*(\delta-\nu_i)}\frac{(R_i-R_i^*)^2}{2}, \qquad i=1, \ldots,n. $$
By using equations $\eqref{equlibrium_eq}$, and differentiating along the solution of \eqref{sairs_net} with $\delta_A = \delta_I =: \delta$, we obtain

\begin{equation}\label{dV1new}
    \begin{split}
        \frac{dV_{i,1}}{dt} =& \left(1- \frac{1}{\Tilde{s}_i}\right)   \frac{d S_i(t)}{dt} \\
        =& \left(1- \frac{1}{\Tilde{s}_i}\right) \left[-S_i^* (\mu_i + \nu_i)(\Tilde{s}_i - 1) + \gamma R_i^*(\t r_i-1) + \sum_{j=1}^n \left(\beta_{ij}^A  A_j^* S_i^*\left(1 -\tilde{a}_j \tilde{s}_i \right) +\beta_{ij}^II_j^* S_i^* \left( 1-\tilde{i}_j \tilde{s}_i \right)\right)\right] \\
        =& -S_i^* (\mu_i + \nu_i) \left( 1- \frac{1}{\Tilde{s}_i}  \right) (\t s_i-1)+ \gamma R_i^*\left( 1- \frac{1}{\Tilde{s}_i}  \right) (\t r_i-1) + \sum_{j=1}^{n} \left(\beta_{ij}^A A_j^* S_i^* \left(1- \tilde{a}_j \tilde{s}_i - \frac{1}{\tilde{s}_i} + \tilde{a}_j\right)\right.\\
        &\left.+\beta_{ij}^I I_j^* S_i^* \left(1- \tilde{i}_j \tilde{s}_i - \frac{1}{\tilde{s}_i} + \tilde{i}_j\right)  \right),
    \end{split}
\end{equation}
and the derivatives $\frac{dV_{i,2}}{dt}$ and $\frac{dV_{n+i}}{dt}$ as in \eqref{dV2} and \eqref{dV3}, respectively. Moreover, 
 \begin{align}
 \begin{split}
\frac{d W_i}{dt}&= \frac{\gamma}{S_i^*(\delta-\nu_i)}(R_i-R_i^*) \frac{dR_i}{dt}\\
 &=\frac{\gamma}{S_i^*(\delta-\nu_i)}(R_i-R_i^*)\left[ \delta(A_i-A_i^*+I_i-I_i^*) + \nu_i (S_i-S_i^*)-(\gamma+\mu_i)(R_i-R_i^*)\right]\\
&= \frac{\gamma}{S_i^*(\delta-\nu_i)}(R_i-R_i^*)\left[ \delta(S^*_i-S_i+R^*_i-R_i) + \nu_i (S_i-S_i^*)-(\gamma+\mu_i)(R_i-R_i^*)\right]\\
&= \frac{\gamma}{S_i^*(\delta-\nu_i)} R_i^* S_i^* (\nu_i-\delta) (\t s_i-1)(\t r_i-1)-(\gamma+\mu_i +\delta)R_i^*(\tilde{r}_i-1)^2,
 \end{split}
 \end{align}
 by assumption $\delta > \nu_i$, thus
 \begin{equation}\label{dWleq}
  \frac{d W_i}{dt} \leq - \gamma R_i^* (\t s_i-1)(\t r_i-1).  
 \end{equation}
 Let us consider the weighted digraph $\cG$, the weight matrix $\t B$, and the functions $G_{i,j}$, for $i,j=1, \ldots, 2n$ defined as in Theorem \ref{thm:GAS_SAIR}. Consider the following function:
$$V=\sum_{i=1}^n  \left(c_i V_i + c_{n+i} V_{n+i}\right)+ \sum_{i=1}^n c_i W_i, $$
where the constant $c_i>0$ are defined as in \cite[Prop. 3.1]{Shuai2013Lyapunov}. Then, by following similar steps as in Theorem \ref{thm:GAS_SAIR} and from \eqref{dWleq}, we obtain
\begin{align}
\frac{dV}{dt} &\leq \sum_{i=1}^{2n} \sum_{j=1}^{2n} c_i \t \beta_{ij} G_{i,j} - \sum_{i=1}^n c_i  (\mu_i+\nu_i) S_i^* \left(1-\frac{1}{\t s}\right)(\t s_i-1)+
\sum_{i=1}^n c_i \gamma R_i^* (\t {r}_i-1) \left[\left(1-\frac{1}{\t s_i}\right) -(\t s_i-1) \right]\\\nonumber
&=\sum_{i=1}^{2n} \sum_{j=1}^{2n} c_i \t \beta_{ij} G_{i,j} - \sum_{i=1}^n c_i  (\mu_i+\nu_i) S_i^* \left(1-\frac{1}{\t s}\right)(\t s_i-1)+
\sum_{i=1}^n c_i \gamma R_i^* (\t r_i-1) \left(1-\frac{1}{\t s_i}\right)(1-\t s_i)\\\nonumber
&=\sum_{i=1}^{2n} \sum_{j=1}^{2n} c_i \t \beta_{ij} G_{i,j} - \sum_{i=1}^n c_i \left[ (\mu_i+\nu_i) S_i^* +\gamma R_i^* (\t r_i-1)\right] \left(1-\frac{1}{\t s}\right)(\t s_i-1).
\end{align}
Now, since it can be verified that over each directed cycle $\cC$ of $(\mathcal{G},\tilde{B})$, $\sum _{(s,r) \in \mathcal{E}(\mathcal{C})} G_{rs}=0$, by following the same arguments in the proof of \cite[Thm 3.5]{Shuai2013Lyapunov}, we have that
$\sum_{i=1}^{2n} \sum_{j=1}^{2n} c_i \t \beta_{ij} G_{i,j}=0$. Moreover, by assumption ${(\mu_i+\nu_i)}{S_i^*} \geq {\gamma R_i^*}$, for each $i=1, \ldots, n$, hence 
$$(\mu_i+\nu_i) S_i^* +\gamma R_i^* (r_i-1) \geq (\mu_i+\nu_i) S_i^* -\gamma R_i^* \geq 0, \qquad i=1, \ldots, n.$$
Thus, we have $\frac{dV}{dt} \leq 0$.
Since the largest compact invariant set in which $\frac{dV}{dt}=0$ is the singleton $\{x^* \}$, by LaSalle invariance principle our claim follows.
\end{proof}

\begin{remark}
 Note that if $\nu_i=0$ for all $i$, we obtain the same sufficient conditions for the GAS of the EE found for the SIRS model in \cite{Muroya2013}.
\end{remark}

\section{Numerical analysis}\label{simulations}

In this section, we explore the role of the network structures in the evolution of the epidemics. The primary criterion for parameter selection is the clarity of the resulting plot. Hence, the simulations were carried out with a set of parameters considered in \cite{ottaviano2022global}. These parameters, summarized in Table \ref{tab:param}, ensure that $\mathcal{R}_0>1$ in all the networks we consider, whose shapes are represented in Figure \ref{fig:shapes}.
\begin{table}[H]
\centering
\begin{tabular}{|c|c|c|c|c|c|c|c|c|c|}
\hline
$\beta^A_{ii}$ & $\beta^A_{ij}$ & $\beta^I_{ii}$ & $\beta^I_{ij}$ & $\mu_i$                         & $\nu_i$  & $\gamma$ & $\delta_A$ & $\delta_I$ & $\alpha$ \\ \hline
$0.8$          & $0.4$          & $0.95$         & $0.475$        & $1/(70\cdot365)$ & $0.01$ & $0.02$   & $0.1$      & $0.51$     & $0.8$    \\ \hline
\end{tabular}
\caption{Values of the parameters used in the simulations:
$\beta^A_{ii}=0.8$, which we reduced to $\beta^A_{ij}=0.4$ if $i\neq j$, to model a lower inter-community spreading; $\beta^I_{ii}=0.95$ and $\beta^I_{ij}=0.475$ if $i\neq j$; $\mu_i=1/(70\cdot365)$, meaning an average lifespan of $70$ years for all $i$; $\nu_i=0.01$, meaning $1\%$ of the susceptible population is vaccinated every day for all $i$; $\gamma=0.02$, meaning an average immunity of $50$ days; $\delta_A =0.1$, $\delta_I=0.51$, $\alpha=0.8$.\label{tab:param}}
\end{table}

In particular, we remark on how sensitive $\mathcal{R}_0$ is on the topology of the network, which is reflected in its adjacency matrix. 
Indeed, let us consider \eqref{R0_net} and let 
$$\beta_1=\min_{i,j} (M_1)_{i,j}, \qquad \text{and} \qquad \beta_2=\max_{i,j} (M_1)_{i,j}.$$
 Let us define $\bar \cA=\cA + I_n$, where $\cA$ is the adjacency matrix and $n$ the number of nodes of the network we are considering, respectively. Then, as a consequence of the Perron-Frobenius theorem, the following lower and upper bounds for $\cR_0$ hold:
\begin{equation}
\beta_1 \rho(\bar \cA) \leq \cR_0 \leq  \beta_2 \rho(\bar \cA)   
\end{equation}
In the case of the cycle-tree network in Figure \ref{fig:shapes}(a), we have $\rho(\cA)=3.2877$, for the star network in Figure \ref{fig:shapes}(b), $\rho(\cA)=3.8284$, in the case of the ring network in Figure \ref{fig:shapes}(c), $\rho(\cA)=3$, and for the line network in Figure \ref{fig:shapes}(d) we have $\rho(\cA)=2.9021$.
Consequently, in the star network, 
we found the largest $\mathcal{R}_0 = 4.91$, for the cycle-tree network we have $\mathcal{R}_0=4.37$. 
In the other two networks, i.e. the ring and the line, we find $\mathcal{R}_0= 4.07$ and $\mathcal{R}_0= 3.97$, respectively; 
we can see that the presence of one additional link in the ring increases the spectral radius of the transmission matrices and thus facilitates the spread of the disease.

We provide numerical simulations of the evolution of an epidemics 
 for the different 
 networks considered, showing the dynamics of the fraction of asymptomatic and symptomatic infected individuals (see Figures \ref{fig:real}, \ref{fig:stella}, \ref{fig:anello} and \ref{fig:strip}). In each simulations, in order to depict a realistic scenario, the epidemics starts in only one of the communities (Community 1), with a small asymptomatic fraction of the population and no symptomatic individuals, while the rest of the population is entirely susceptible. We obtain a delay in the start of the epidemics, directly proportional to the path distance of any community from Community 1: this is particularly visible in Figure \ref{fig:strip}. We observe a delay in the time of the peak, as well, although this is often less pronounced; this is clear in in Figure \ref{fig:anello}, in which communities with the same distance (path length) from Community 1 reach the peak at the same time. We underline that the time and the magnitude of the peak are directly proportional to the number of links of each community. Indeed, we can see that in the star network the peak of the non-central communities happens exactly at the same time and has the same magnitude, as one would expect, see Figure \ref{fig:stella}. In the ring network (Figure \ref{fig:anello}), all the communities only have two links, thus the time and the magnitude of the peak are the same for communities at the same path distance from Community 1. In Figure \ref{fig:strip}, instead, the magnitude is the same for Communities 2-8 and is lower in Communities 1 and 9, which are less connected with the others. The peak is reached faster in Community 1, in which the epidemic starts, and occurs later in Community 9, since it is the further and the less connected of the network. We remark that this predictable behaviour of the peak of infected individuals is due to the deterministic nature of the model, and thus of the numerical simulations.

For ease of interpretation, we plot the total number of Asymptomatic infected individuals and symptomatic Infected individuals in all four cases, see Figure \ref{fig:total}. The qualitative behaviour of all simulations is the same: after a first spike, the dynamics converges towards the endemic equilibrium, through quickly damping oscillations. In all our simulations, the endemic equilibrium values of $I$ are greater than the ones of $A$,
as we expected from \eqref{IofA} and our choice of the parameters involved in the formula.

Notice the significantly lower peaks in Figure \ref{fig:strip_tot}, when compared to \ref{fig:anello_tot}, even though the corresponding networks only differ for one edge, connecting Community 9 to Community 1, in which the epidemics start. The tables \ref{tab:treeI}-\ref{tab:strip} in Appendix \ref{appendix2} show the times in which the epidemic starts in each community, as well as the magnitude and the times of the peaks, both for asymptomatic ($A$) and symptomatic ($I$) infected individuals, for all the networks under investigations.

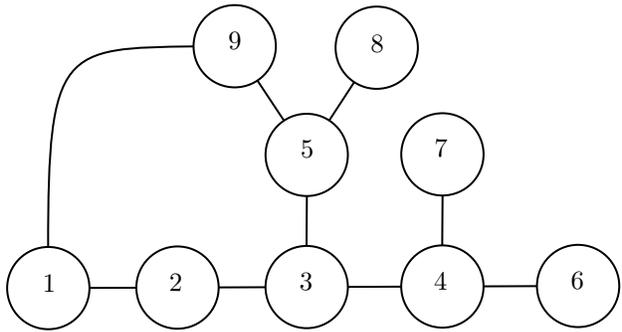
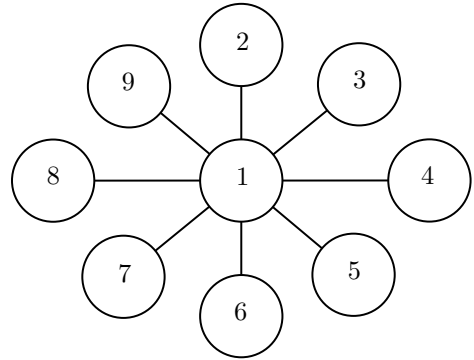
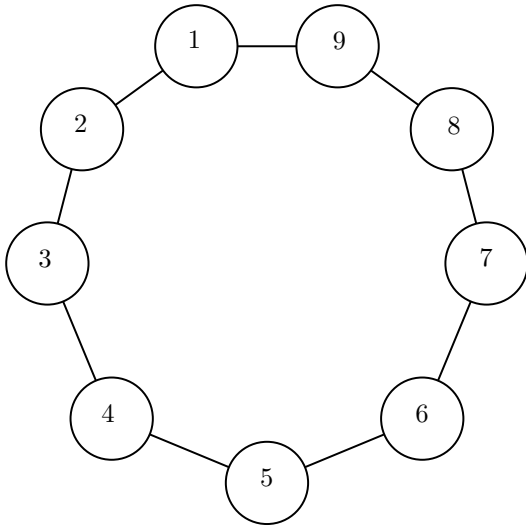
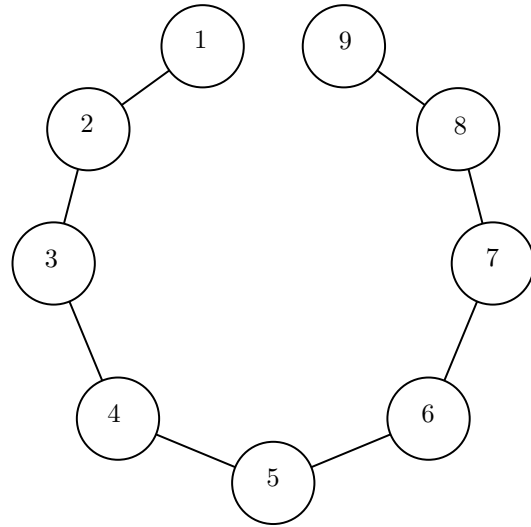
\begin{figure}[H]
    \centering
    \begin{subfigure}[t]{.49\textwidth}
      \centering

\tikzset{every picture/.style={line width=0.75pt}} 

\begin{tikzpicture}[x=0.70pt,y=0.70pt,yscale=-1,xscale=1]

\draw    (326.91,161.48) -- (364.74,103.51) ;
\draw    (326.91,161.48) -- (288,102.71) ;
\draw   (326.99,255.18) .. controls (314.71,255.23) and (304.71,245.31) .. (304.66,233.03) .. controls (304.61,220.75) and (314.53,210.75) .. (326.81,210.7) .. controls (339.09,210.65) and (349.09,220.57) .. (349.14,232.85) .. controls (349.19,245.13) and (339.27,255.13) .. (326.99,255.18) -- cycle ;
\draw   (400.32,254.88) .. controls (388.04,254.93) and (378.04,245.02) .. (377.99,232.73) .. controls (377.94,220.45) and (387.86,210.46) .. (400.14,210.41) .. controls (412.42,210.36) and (422.42,220.27) .. (422.47,232.56) .. controls (422.52,244.84) and (412.6,254.83) .. (400.32,254.88) -- cycle ;
\draw   (473.65,254.59) .. controls (461.37,254.64) and (451.37,244.72) .. (451.33,232.44) .. controls (451.28,220.16) and (461.19,210.16) .. (473.47,210.11) .. controls (485.76,210.06) and (495.75,219.98) .. (495.8,232.26) .. controls (495.85,244.54) and (485.93,254.54) .. (473.65,254.59) -- cycle ;
\draw   (400.33,183.42) .. controls (388.05,183.47) and (378.05,173.55) .. (378.01,161.27) .. controls (377.96,148.99) and (387.87,138.99) .. (400.15,138.94) .. controls (412.44,138.89) and (422.43,148.81) .. (422.48,161.09) .. controls (422.53,173.37) and (412.61,183.37) .. (400.33,183.42) -- cycle ;
\draw  [fill={rgb, 255:red, 255; green, 255; blue, 255 }  ,fill opacity=1 ] (288.09,124.95) .. controls (275.81,125) and (265.81,115.09) .. (265.76,102.8) .. controls (265.71,90.52) and (275.63,80.53) .. (287.91,80.48) .. controls (300.19,80.43) and (310.19,90.34) .. (310.24,102.63) .. controls (310.29,114.91) and (300.37,124.9) .. (288.09,124.95) -- cycle ;
\draw  [fill={rgb, 255:red, 255; green, 255; blue, 255 }  ,fill opacity=1 ] (364.83,125.75) .. controls (352.55,125.8) and (342.56,115.88) .. (342.51,103.6) .. controls (342.46,91.32) and (352.37,81.32) .. (364.66,81.27) .. controls (376.94,81.23) and (386.93,91.14) .. (386.98,103.42) .. controls (387.03,115.71) and (377.12,125.7) .. (364.83,125.75) -- cycle ;
\draw    (349.14,232.85) -- (377.99,232.73) ;
\draw    (422.47,232.56) -- (451.33,232.44) ;
\draw    (265.76,102.8) .. controls (195,103.71) and (187,103.71) .. (187.19,211.26) ;
\draw   (187.37,255.74) .. controls (175.09,255.79) and (165.09,245.87) .. (165.04,233.59) .. controls (164.99,221.31) and (174.91,211.31) .. (187.19,211.26) .. controls (199.47,211.21) and (209.47,221.13) .. (209.52,233.41) .. controls (209.57,245.69) and (199.65,255.69) .. (187.37,255.74) -- cycle ;
\draw   (257.18,255.46) .. controls (244.9,255.51) and (234.9,245.59) .. (234.85,233.31) .. controls (234.8,221.03) and (244.72,211.03) .. (257,210.98) .. controls (269.28,210.93) and (279.28,220.85) .. (279.33,233.13) .. controls (279.38,245.41) and (269.46,255.41) .. (257.18,255.46) -- cycle ;
\draw    (279.33,233.13) -- (304.66,233.03) ;
\draw    (209.52,233.41) -- (234.85,233.31) ;
\draw  [fill={rgb, 255:red, 255; green, 255; blue, 255 }  ,fill opacity=1 ] (327,183.71) .. controls (314.72,183.76) and (304.72,173.85) .. (304.67,161.57) .. controls (304.62,149.28) and (314.54,139.29) .. (326.82,139.24) .. controls (339.1,139.19) and (349.1,149.11) .. (349.15,161.39) .. controls (349.2,173.67) and (339.28,183.66) .. (327,183.71) -- cycle ;
\draw    (326.81,210.7) -- (327,183.71) ;
\draw    (400.14,210.41) -- (400.33,183.42) ;

\draw (321.62,223.71) node [anchor=north west][inner sep=0.75pt]  [rotate=-1.64] [align=left] {3};
\draw (393.99,223.63) node [anchor=north west][inner sep=0.75pt]  [rotate=-359.48] [align=left] {4};
\draw (321.89,152.27) node [anchor=north west][inner sep=0.75pt]  [rotate=-359.26] [align=left] {5};
\draw (468.22,223.18) node [anchor=north west][inner sep=0.75pt]  [rotate=-1.11] [align=left] {6};
\draw (394.33,151.58) node [anchor=north west][inner sep=0.75pt]  [rotate=-0.02] [align=left] {7};
\draw (359.89,95.08) node [anchor=north west][inner sep=0.75pt]  [rotate=-359.86] [align=left] {8};
\draw (282.86,93.74) node [anchor=north west][inner sep=0.75pt]  [rotate=-359.56] [align=left] {9};
\draw (251.23,224.05) node [anchor=north west][inner sep=0.75pt]  [rotate=-1.21] [align=left] {2};
\draw (182.79,224.4) node [anchor=north west][inner sep=0.75pt]  [rotate=-0.47] [align=left] {1};

\end{tikzpicture}
\caption{Cycle-tree network, i.e. a tree graph in which we add a cycle linking the first and the last community.}\label{fig:cycle-tree-net}
\end{subfigure}
   \begin{subfigure}[t]{.49\textwidth}
      \centering

\tikzset{every picture/.style={line width=0.75pt}} 

\begin{tikzpicture}[x=0.70pt,y=0.70pt,yscale=-1,xscale=1]

\draw    (390.43,204.62) -- (329.76,153.67) ;
\draw    (266.1,205.71) -- (329.76,153.67) ;
\draw    (329.76,153.67) -- (269.1,102.71) ;
\draw   (307.52,227) .. controls (307.52,214.72) and (317.48,204.76) .. (329.76,204.76) .. controls (342.04,204.76) and (352,214.72) .. (352,227) .. controls (352,239.28) and (342.04,249.24) .. (329.76,249.24) .. controls (317.48,249.24) and (307.52,239.28) .. (307.52,227) -- cycle ;
\draw    (329.76,175.9) -- (329.76,204.76) ;
\draw   (307.52,80.33) .. controls (307.52,68.05) and (317.48,58.1) .. (329.76,58.1) .. controls (342.04,58.1) and (352,68.05) .. (352,80.33) .. controls (352,92.62) and (342.04,102.57) .. (329.76,102.57) .. controls (317.48,102.57) and (307.52,92.62) .. (307.52,80.33) -- cycle ;
\draw    (329.76,102.57) -- (329.76,131.43) ;
\draw   (409.19,153.62) .. controls (409.19,141.34) and (419.15,131.38) .. (431.43,131.38) .. controls (443.71,131.38) and (453.67,141.34) .. (453.67,153.62) .. controls (453.67,165.9) and (443.71,175.86) .. (431.43,175.86) .. controls (419.15,175.86) and (409.19,165.9) .. (409.19,153.62) -- cycle ;
\draw   (205.86,153.71) .. controls (205.86,141.43) and (215.81,131.48) .. (228.1,131.48) .. controls (240.38,131.48) and (250.33,141.43) .. (250.33,153.71) .. controls (250.33,166) and (240.38,175.95) .. (228.1,175.95) .. controls (215.81,175.95) and (205.86,166) .. (205.86,153.71) -- cycle ;
\draw    (352,153.67) -- (409.19,153.62) ;
\draw    (250.33,153.71) -- (307.52,153.67) ;
\draw    (329.76,153.67) -- (393.43,101.62) ;
\draw  [fill={rgb, 255:red, 255; green, 255; blue, 255 }  ,fill opacity=1 ] (307.52,153.67) .. controls (307.52,141.38) and (317.48,131.43) .. (329.76,131.43) .. controls (342.04,131.43) and (352,141.38) .. (352,153.67) .. controls (352,165.95) and (342.04,175.9) .. (329.76,175.9) .. controls (317.48,175.9) and (307.52,165.95) .. (307.52,153.67) -- cycle ;
\draw  [fill={rgb, 255:red, 255; green, 255; blue, 255 }  ,fill opacity=1 ] (371.19,101.62) .. controls (371.19,89.34) and (381.15,79.38) .. (393.43,79.38) .. controls (405.71,79.38) and (415.67,89.34) .. (415.67,101.62) .. controls (415.67,113.9) and (405.71,123.86) .. (393.43,123.86) .. controls (381.15,123.86) and (371.19,113.9) .. (371.19,101.62) -- cycle ;
\draw  [fill={rgb, 255:red, 255; green, 255; blue, 255 }  ,fill opacity=1 ] (243.86,205.71) .. controls (243.86,193.43) and (253.81,183.48) .. (266.1,183.48) .. controls (278.38,183.48) and (288.33,193.43) .. (288.33,205.71) .. controls (288.33,218) and (278.38,227.95) .. (266.1,227.95) .. controls (253.81,227.95) and (243.86,218) .. (243.86,205.71) -- cycle ;
\draw  [fill={rgb, 255:red, 255; green, 255; blue, 255 }  ,fill opacity=1 ] (368.19,204.62) .. controls (368.19,192.34) and (378.15,182.38) .. (390.43,182.38) .. controls (402.71,182.38) and (412.67,192.34) .. (412.67,204.62) .. controls (412.67,216.9) and (402.71,226.86) .. (390.43,226.86) .. controls (378.15,226.86) and (368.19,216.9) .. (368.19,204.62) -- cycle ;
\draw  [fill={rgb, 255:red, 255; green, 255; blue, 255 }  ,fill opacity=1 ] (246.86,102.71) .. controls (246.86,90.43) and (256.81,80.48) .. (269.1,80.48) .. controls (281.38,80.48) and (291.33,90.43) .. (291.33,102.71) .. controls (291.33,115) and (281.38,124.95) .. (269.1,124.95) .. controls (256.81,124.95) and (246.86,115) .. (246.86,102.71) -- cycle ;

\draw (325.33,145) node [anchor=north west][inner sep=0.75pt]   [align=left] {1};
\draw (425.33,144.67) node [anchor=north west][inner sep=0.75pt]   [align=left] {4};
\draw (325.33,72) node [anchor=north west][inner sep=0.75pt]   [align=left] {2};
\draw (388.67,93) node [anchor=north west][inner sep=0.75pt]   [align=left] {3};
\draw (385.76,196.33) node [anchor=north west][inner sep=0.75pt]   [align=left] {5};
\draw (324.33,218) node [anchor=north west][inner sep=0.75pt]   [align=left] {6};
\draw (261.67,197.67) node [anchor=north west][inner sep=0.75pt]   [align=left] {7};
\draw (223,144.67) node [anchor=north west][inner sep=0.75pt]   [align=left] {8};
\draw (263.67,93.67) node [anchor=north west][inner sep=0.75pt]   [align=left] {9};

\end{tikzpicture}

      \caption{Star network, in which a central community is linked to all the others, and no other connections are present.}\label{fig:star-net}
\end{subfigure}
 \begin{subfigure}[t]{.49\textwidth}
      \centering

\tikzset{every picture/.style={line width=0.75pt}} 

\begin{tikzpicture}[x=0.70pt,y=0.70pt,yscale=-1,xscale=1]

\draw    (211.43,142) -- (230.17,69.4) ;
\draw    (230.17,69.4) -- (291.93,24.53) ;
\draw    (211.43,142) -- (246.19,225.91) ;
\draw    (246.19,225.91) -- (330.1,260.67) ;
\draw    (330.1,260.67) -- (414.01,225.91) ;
\draw    (414.01,225.91) -- (448.76,142) ;
\draw    (430.02,69.4) -- (448.76,142) ;
\draw    (368.26,24.53) -- (430.02,69.4) ;
\draw    (291.93,24.53) -- (368.26,24.53) ;
\draw  [fill={rgb, 255:red, 255; green, 255; blue, 255 }  ,fill opacity=1 ] (307.86,260.67) .. controls (307.86,248.38) and (317.81,238.43) .. (330.1,238.43) .. controls (342.38,238.43) and (352.33,248.38) .. (352.33,260.67) .. controls (352.33,272.95) and (342.38,282.9) .. (330.1,282.9) .. controls (317.81,282.9) and (307.86,272.95) .. (307.86,260.67) -- cycle ;
\draw  [fill={rgb, 255:red, 255; green, 255; blue, 255 }  ,fill opacity=1 ] (269.69,24.53) .. controls (269.69,12.25) and (279.65,2.3) .. (291.93,2.3) .. controls (304.21,2.3) and (314.17,12.25) .. (314.17,24.53) .. controls (314.17,36.81) and (304.21,46.77) .. (291.93,46.77) .. controls (279.65,46.77) and (269.69,36.81) .. (269.69,24.53) -- cycle ;
\draw  [fill={rgb, 255:red, 255; green, 255; blue, 255 }  ,fill opacity=1 ] (426.52,142) .. controls (426.52,129.72) and (436.48,119.76) .. (448.76,119.76) .. controls (461.04,119.76) and (471,129.72) .. (471,142) .. controls (471,154.28) and (461.04,164.24) .. (448.76,164.24) .. controls (436.48,164.24) and (426.52,154.28) .. (426.52,142) -- cycle ;
\draw  [fill={rgb, 255:red, 255; green, 255; blue, 255 }  ,fill opacity=1 ] (189.19,142) .. controls (189.19,129.72) and (199.15,119.76) .. (211.43,119.76) .. controls (223.71,119.76) and (233.67,129.72) .. (233.67,142) .. controls (233.67,154.28) and (223.71,164.24) .. (211.43,164.24) .. controls (199.15,164.24) and (189.19,154.28) .. (189.19,142) -- cycle ;
\draw  [fill={rgb, 255:red, 255; green, 255; blue, 255 }  ,fill opacity=1 ] (407.78,69.4) .. controls (407.78,57.12) and (417.74,47.16) .. (430.02,47.16) .. controls (442.3,47.16) and (452.26,57.12) .. (452.26,69.4) .. controls (452.26,81.68) and (442.3,91.64) .. (430.02,91.64) .. controls (417.74,91.64) and (407.78,81.68) .. (407.78,69.4) -- cycle ;
\draw  [fill={rgb, 255:red, 255; green, 255; blue, 255 }  ,fill opacity=1 ] (346.02,24.53) .. controls (346.02,12.25) and (355.98,2.3) .. (368.26,2.3) .. controls (380.54,2.3) and (390.5,12.25) .. (390.5,24.53) .. controls (390.5,36.81) and (380.54,46.77) .. (368.26,46.77) .. controls (355.98,46.77) and (346.02,36.81) .. (346.02,24.53) -- cycle ;
\draw  [fill={rgb, 255:red, 255; green, 255; blue, 255 }  ,fill opacity=1 ] (223.95,225.91) .. controls (223.95,213.63) and (233.9,203.67) .. (246.19,203.67) .. controls (258.47,203.67) and (268.42,213.63) .. (268.42,225.91) .. controls (268.42,238.19) and (258.47,248.15) .. (246.19,248.15) .. controls (233.9,248.15) and (223.95,238.19) .. (223.95,225.91) -- cycle ;
\draw  [fill={rgb, 255:red, 255; green, 255; blue, 255 }  ,fill opacity=1 ] (391.77,225.91) .. controls (391.77,213.63) and (401.72,203.67) .. (414.01,203.67) .. controls (426.29,203.67) and (436.24,213.63) .. (436.24,225.91) .. controls (436.24,238.19) and (426.29,248.15) .. (414.01,248.15) .. controls (401.72,248.15) and (391.77,238.19) .. (391.77,225.91) -- cycle ;
\draw  [fill={rgb, 255:red, 255; green, 255; blue, 255 }  ,fill opacity=1 ] (207.93,69.4) .. controls (207.93,57.12) and (217.89,47.16) .. (230.17,47.16) .. controls (242.45,47.16) and (252.41,57.12) .. (252.41,69.4) .. controls (252.41,81.68) and (242.45,91.64) .. (230.17,91.64) .. controls (217.89,91.64) and (207.93,81.68) .. (207.93,69.4) -- cycle ;

\draw (285.67,14.67) node [anchor=north west][inner sep=0.75pt]   [align=left] {1};
\draw (239,217) node [anchor=north west][inner sep=0.75pt]   [align=left] {4};
\draw (224.33,60.33) node [anchor=north west][inner sep=0.75pt]   [align=left] {2};
\draw (205,133.33) node [anchor=north west][inner sep=0.75pt]   [align=left] {3};
\draw (324.76,251.33) node [anchor=north west][inner sep=0.75pt]   [align=left] {5};
\draw (408.67,217) node [anchor=north west][inner sep=0.75pt]   [align=left] {6};
\draw (443.33,132.67) node [anchor=north west][inner sep=0.75pt]   [align=left] {7};
\draw (426,61) node [anchor=north west][inner sep=0.75pt]   [align=left] {8};
\draw (364,15.33) node [anchor=north west][inner sep=0.75pt]   [align=left] {9};

\end{tikzpicture}
          \caption{Ring network, in which each community is linked with the previous and next.}\label{fig:ring-net}
\end{subfigure}
 \begin{subfigure}[t]{.49\textwidth}
      \centering

\tikzset{every picture/.style={line width=0.75pt}} 

\begin{tikzpicture}[x=0.70pt,y=0.70pt,yscale=-1,xscale=1]

\draw    (211.43,142) -- (230.17,69.4) ;
\draw    (230.17,69.4) -- (291.93,24.53) ;
\draw    (211.43,142) -- (246.19,225.91) ;
\draw    (246.19,225.91) -- (330.1,260.67) ;
\draw    (330.1,260.67) -- (414.01,225.91) ;
\draw    (414.01,225.91) -- (448.76,142) ;
\draw    (430.02,69.4) -- (448.76,142) ;
\draw    (368.26,24.53) -- (430.02,69.4) ;
\draw  [fill={rgb, 255:red, 255; green, 255; blue, 255 }  ,fill opacity=1 ] (307.86,260.67) .. controls (307.86,248.38) and (317.81,238.43) .. (330.1,238.43) .. controls (342.38,238.43) and (352.33,248.38) .. (352.33,260.67) .. controls (352.33,272.95) and (342.38,282.9) .. (330.1,282.9) .. controls (317.81,282.9) and (307.86,272.95) .. (307.86,260.67) -- cycle ;
\draw  [fill={rgb, 255:red, 255; green, 255; blue, 255 }  ,fill opacity=1 ] (269.69,24.53) .. controls (269.69,12.25) and (279.65,2.3) .. (291.93,2.3) .. controls (304.21,2.3) and (314.17,12.25) .. (314.17,24.53) .. controls (314.17,36.81) and (304.21,46.77) .. (291.93,46.77) .. controls (279.65,46.77) and (269.69,36.81) .. (269.69,24.53) -- cycle ;
\draw  [fill={rgb, 255:red, 255; green, 255; blue, 255 }  ,fill opacity=1 ] (426.52,142) .. controls (426.52,129.72) and (436.48,119.76) .. (448.76,119.76) .. controls (461.04,119.76) and (471,129.72) .. (471,142) .. controls (471,154.28) and (461.04,164.24) .. (448.76,164.24) .. controls (436.48,164.24) and (426.52,154.28) .. (426.52,142) -- cycle ;
\draw  [fill={rgb, 255:red, 255; green, 255; blue, 255 }  ,fill opacity=1 ] (189.19,142) .. controls (189.19,129.72) and (199.15,119.76) .. (211.43,119.76) .. controls (223.71,119.76) and (233.67,129.72) .. (233.67,142) .. controls (233.67,154.28) and (223.71,164.24) .. (211.43,164.24) .. controls (199.15,164.24) and (189.19,154.28) .. (189.19,142) -- cycle ;
\draw  [fill={rgb, 255:red, 255; green, 255; blue, 255 }  ,fill opacity=1 ] (407.78,69.4) .. controls (407.78,57.12) and (417.74,47.16) .. (430.02,47.16) .. controls (442.3,47.16) and (452.26,57.12) .. (452.26,69.4) .. controls (452.26,81.68) and (442.3,91.64) .. (430.02,91.64) .. controls (417.74,91.64) and (407.78,81.68) .. (407.78,69.4) -- cycle ;
\draw  [fill={rgb, 255:red, 255; green, 255; blue, 255 }  ,fill opacity=1 ] (346.02,24.53) .. controls (346.02,12.25) and (355.98,2.3) .. (368.26,2.3) .. controls (380.54,2.3) and (390.5,12.25) .. (390.5,24.53) .. controls (390.5,36.81) and (380.54,46.77) .. (368.26,46.77) .. controls (355.98,46.77) and (346.02,36.81) .. (346.02,24.53) -- cycle ;
\draw  [fill={rgb, 255:red, 255; green, 255; blue, 255 }  ,fill opacity=1 ] (223.95,225.91) .. controls (223.95,213.63) and (233.9,203.67) .. (246.19,203.67) .. controls (258.47,203.67) and (268.42,213.63) .. (268.42,225.91) .. controls (268.42,238.19) and (258.47,248.15) .. (246.19,248.15) .. controls (233.9,248.15) and (223.95,238.19) .. (223.95,225.91) -- cycle ;
\draw  [fill={rgb, 255:red, 255; green, 255; blue, 255 }  ,fill opacity=1 ] (391.77,225.91) .. controls (391.77,213.63) and (401.72,203.67) .. (414.01,203.67) .. controls (426.29,203.67) and (436.24,213.63) .. (436.24,225.91) .. controls (436.24,238.19) and (426.29,248.15) .. (414.01,248.15) .. controls (401.72,248.15) and (391.77,238.19) .. (391.77,225.91) -- cycle ;
\draw  [fill={rgb, 255:red, 255; green, 255; blue, 255 }  ,fill opacity=1 ] (207.93,69.4) .. controls (207.93,57.12) and (217.89,47.16) .. (230.17,47.16) .. controls (242.45,47.16) and (252.41,57.12) .. (252.41,69.4) .. controls (252.41,81.68) and (242.45,91.64) .. (230.17,91.64) .. controls (217.89,91.64) and (207.93,81.68) .. (207.93,69.4) -- cycle ;

\draw (285.67,14.67) node [anchor=north west][inner sep=0.75pt]   [align=left] {1};
\draw (239,217) node [anchor=north west][inner sep=0.75pt]   [align=left] {4};
\draw (224.33,60.33) node [anchor=north west][inner sep=0.75pt]   [align=left] {2};
\draw (205,133.33) node [anchor=north west][inner sep=0.75pt]   [align=left] {3};
\draw (324.76,251.33) node [anchor=north west][inner sep=0.75pt]   [align=left] {5};
\draw (408.67,217) node [anchor=north west][inner sep=0.75pt]   [align=left] {6};
\draw (443.33,132.67) node [anchor=north west][inner sep=0.75pt]   [align=left] {7};
\draw (426,61) node [anchor=north west][inner sep=0.75pt]   [align=left] {8};
\draw (364,15.33) node [anchor=north west][inner sep=0.75pt]   [align=left] {9};

\end{tikzpicture}
          \caption{Line network, i.e. the ring network in which we remove the link between the communities $1$ and $9$.}\label{fig:line-net}
\end{subfigure}
 \caption{The four different network structures we consider in our numerical simulations. Circles represent the communities, numbered from 1 to 9, corresponding to C1 to C9 in Figures \ref{fig:real}, \ref{fig:stella}, \ref{fig:anello} and \ref{fig:strip}. Lines represent the links between the various communities. We use lines instead of arrows, since all networks are considered as undirected.}
 \label{fig:shapes}
\end{figure}

\begin{figure}[H]
    \centering
    \includegraphics[width=0.85\textwidth]{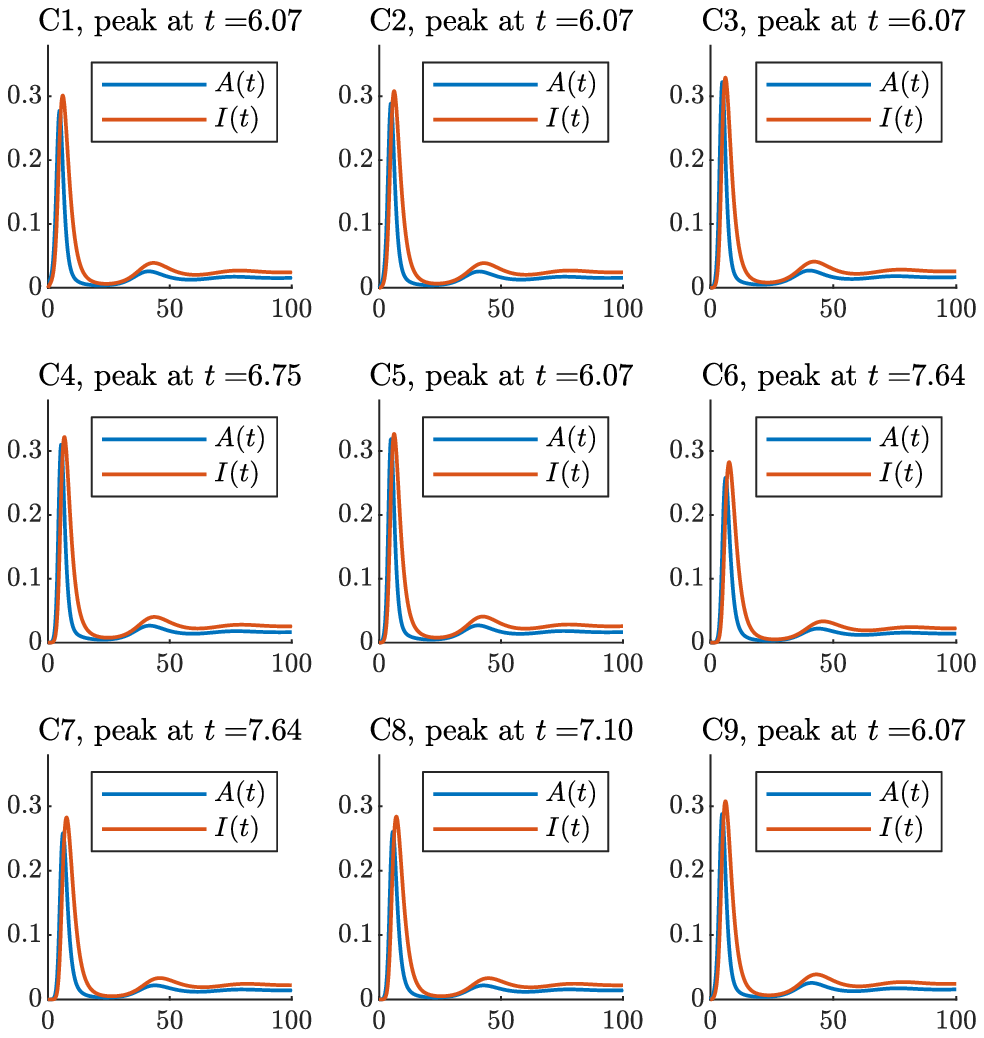}
    \caption{Evolution of the epidemic in each community of the cycle-tree network, see Figure \ref{fig:cycle-tree-net}. The title of each subplot indicates the community it represents, as well as the peak time of infected individuals. In this setting, from (\ref{R0_net}) we obtain $\mathcal{R}_0=4.37$. Refer to Table \ref{tab:param} for the values of the parameters.}
    \label{fig:real}
\end{figure}

\begin{figure}[H]
    \centering
    \includegraphics[width=0.85\textwidth]{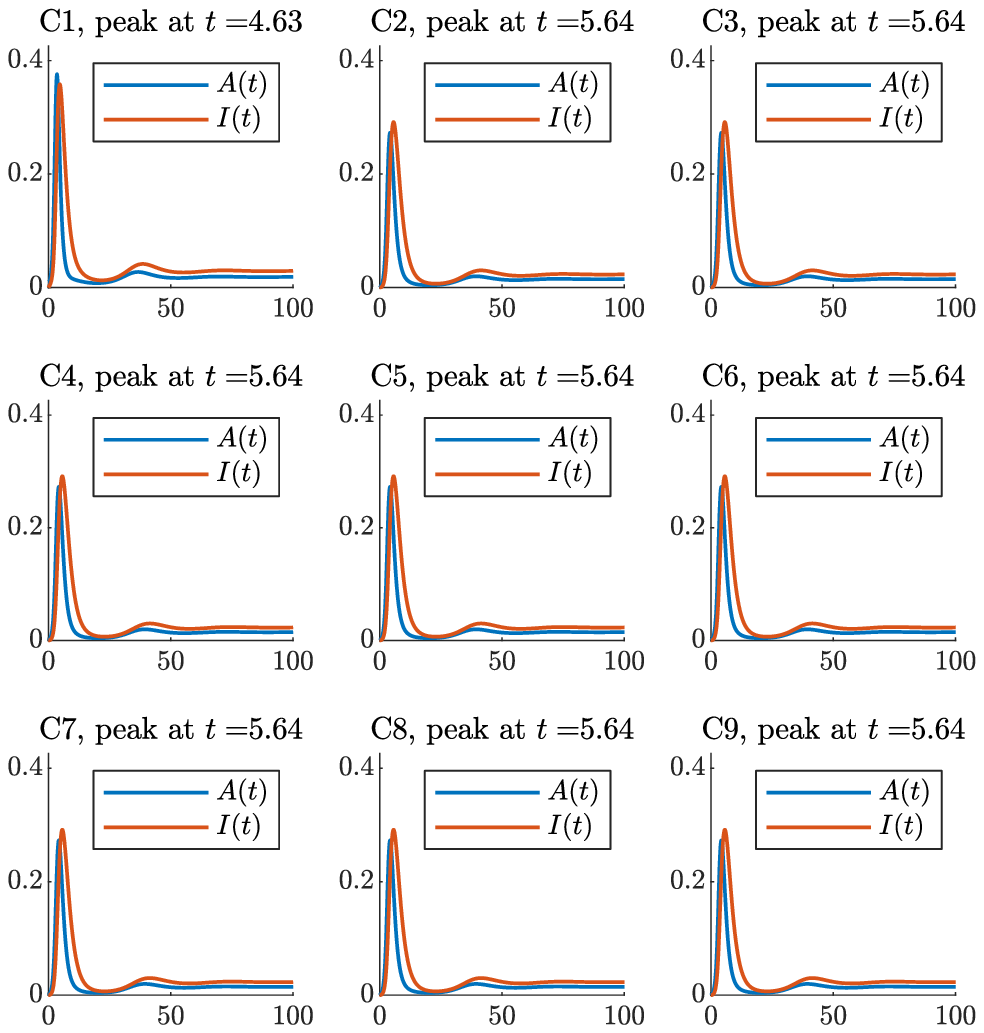}
    \caption{Evolution of the epidemic in each community of the star network, see Figure \ref{fig:star-net}. The title of each subplot indicates the community it represents, as well as the peak time of infected individuals. In this setting, from (\ref{R0_net}) we obtain $\mathcal{R}_0=4.91$. Refer to Table \ref{tab:param} for the values of the parameters.}
    \label{fig:stella}
\end{figure}

\begin{figure}[H]
    \centering
    \includegraphics[width=0.85\textwidth]{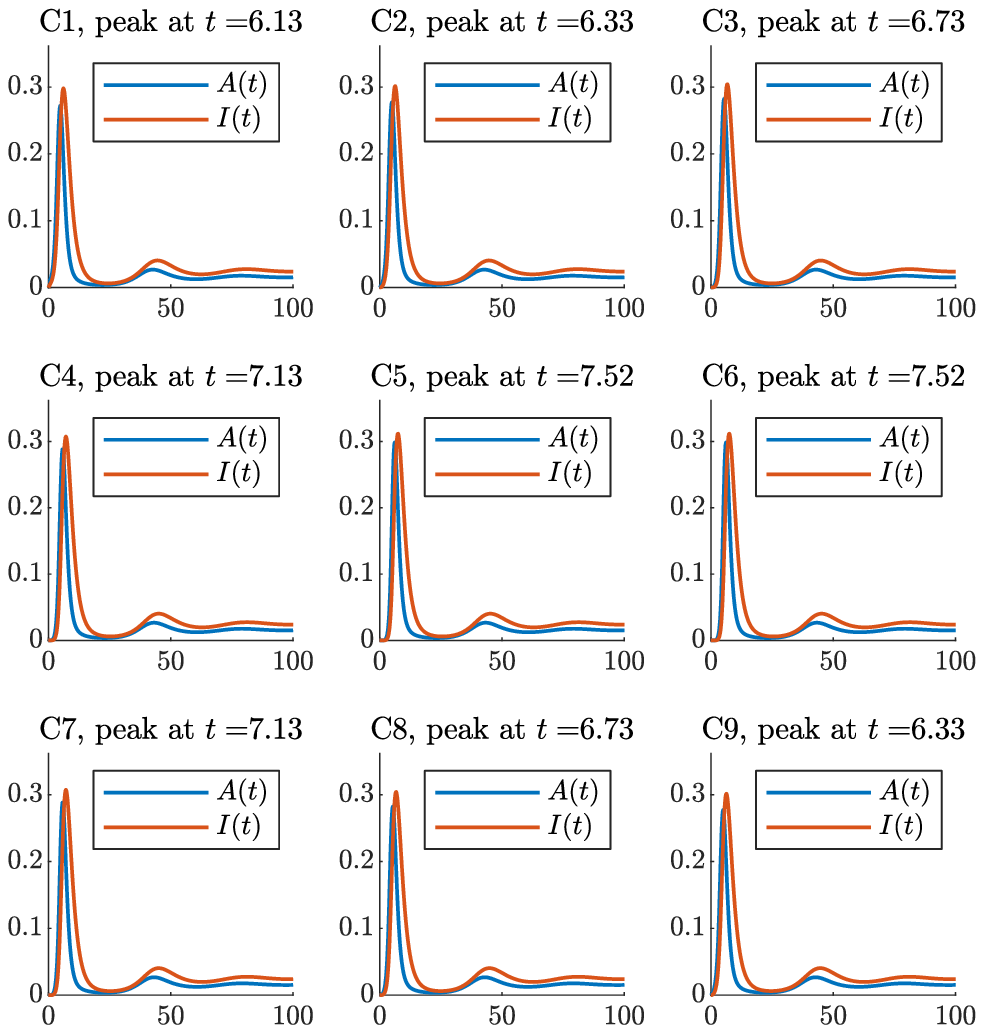}
    \caption{Evolution of the epidemic in each community of the ring network, see Figure \ref{fig:ring-net}. The title of each subplot indicates the community it represents, as well as the peak time of infected individuals. In this setting, from (\ref{R0_net}) we obtain $\mathcal{R}_0=4.07$. Refer to Table \ref{tab:param} for the values of the parameters.}
    \label{fig:anello}
\end{figure}

\begin{figure}[H]
    \centering
    \includegraphics[width=0.85\textwidth]{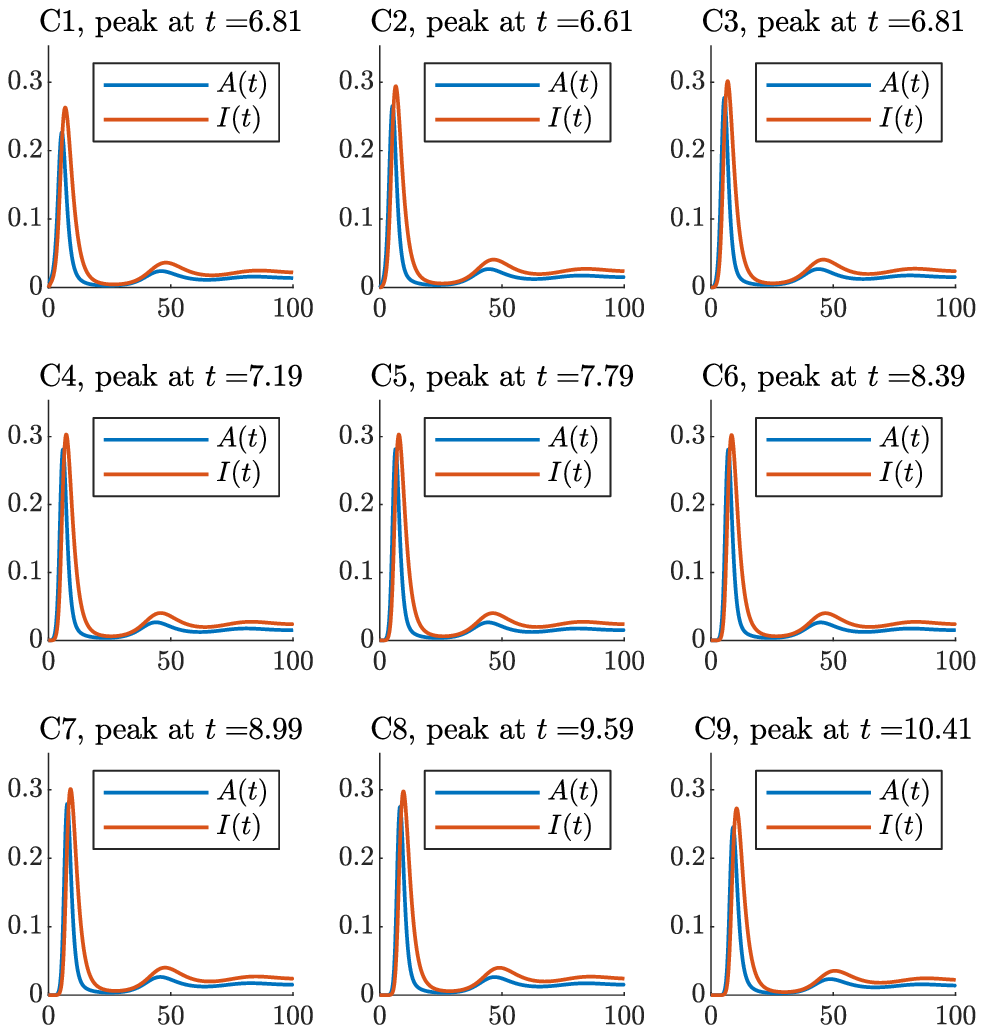}
    \caption{Evolution of the epidemic in each community of the line network, see Figure \ref{fig:line-net}. The title of each subplot indicates the community it represents, as well as the peak time of infected individuals. In this setting, from (\ref{R0_net}) we obtain $\mathcal{R}_0=3.97$. Refer to Table \ref{tab:param} for the values of the parameters.}
    \label{fig:strip}
\end{figure}

\begin{figure}[H]
    \begin{subfigure}{.49\textwidth}
        \centering
        \includegraphics[width=0.9\textwidth]{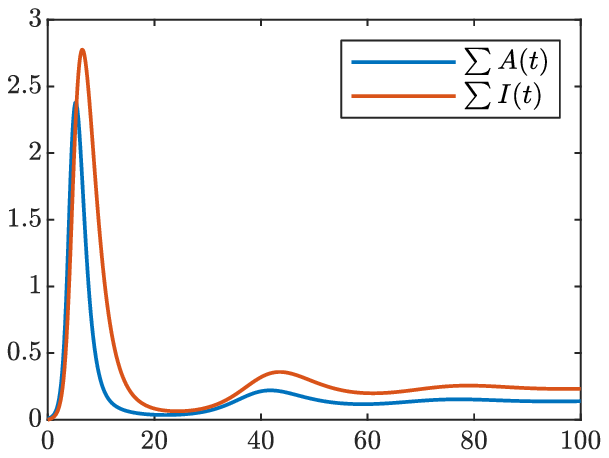}
        \caption{}\label{fig:real_tot}
    \end{subfigure}\hfill
    \begin{subfigure}{0.49\textwidth}
        \centering
        \includegraphics[width=0.9\textwidth]{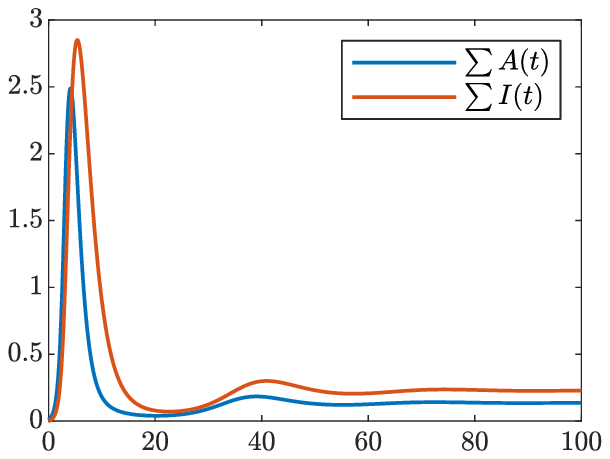}
        \caption{}\label{fig:stella_tot}
    \end{subfigure}
    \begin{subfigure}{0.49\textwidth}
        \centering
        \includegraphics[width=0.9\textwidth]{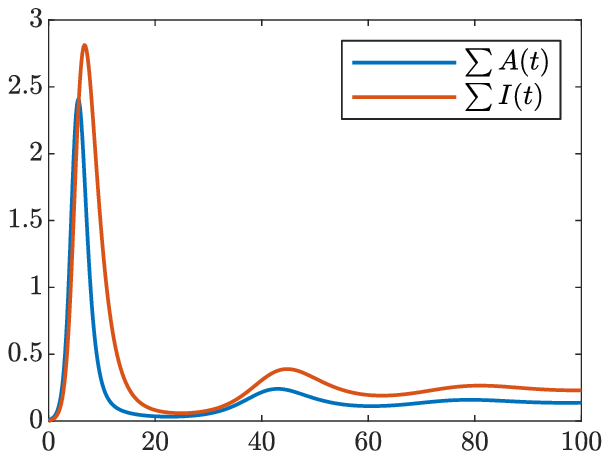}
        \caption{}\label{fig:anello_tot}
    \end{subfigure}\hfill
    \begin{subfigure}{0.49\textwidth}
        \centering
        \includegraphics[width=0.9\textwidth]{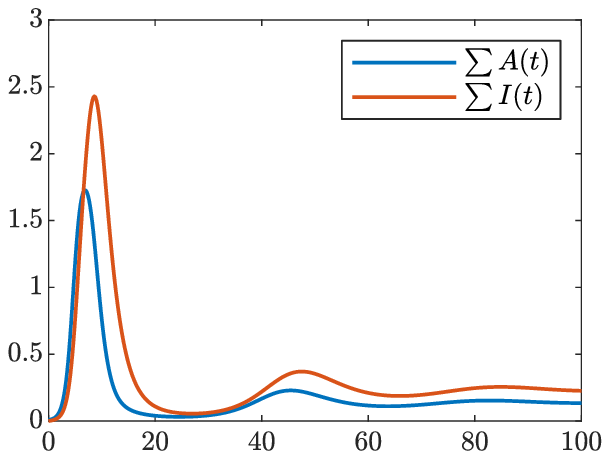}
        \caption{}\label{fig:strip_tot}
    \end{subfigure}
    \caption{Total amount of Asymptomatic infected ($\sum A(t)$) and symptomatic Infected ($\sum I(t)$) in the four networks we simulate. Respectively: (a) cycle-tree network, see Figure \ref{fig:cycle-tree-net}; (b) star network, see Figure \ref{fig:star-net}; (c) ring network, see Figure \ref{fig:ring-net}; and (d) line network, see Figure \ref{fig:line-net}. The qualitative behaviour is the same, i.e. convergence towards the endemic equilibrium through damped oscillations. Refer to Table \ref{tab:param} for the values of the parameters.}
\label{fig:total}
\end{figure}

\section{Conclusion}
We analysed a multi-group SAIRS-type epidemic model with vaccination. In this model, susceptible individuals can be infected by both asymptomatic and symptomatic infectious individuals, belonging to their communities as well as to other adjacent communities.

We provided a stability analysis of the
multi-group system under investigation; to the best of the authors' knowledge, this kind of analytical results was lacking in the literature.
\ste{Precisely}, we derived the expression of the basic reproduction number $\mathcal{R}_0$, which depends on the matrices which encode the transmission rates between and within communities.
We 
showed that if $\mathcal{R}_0 < 1$, the disease-free equilibrium is globally asymptotically stable, i.e. the disease will be eliminated in the long-run, whereas if $\mathcal{R}_0 > 1$ it is unstable. Moreover, in the SAIRS model without vaccination ($\nu_i= 0$, for all $i=1,\dots,n$), we were able to generalize the result on the global asymptotic stability of the disease-free equilibrium also in the case $\mathcal{R}_0 =1$. We 
proved the existence of a unique endemic equilibrium if $\mathcal{R}_0 >1$. We gave sufficient conditions for the local asymptotic stability of the endemic equilibrium; then, we investigated the global asymptotic stability of the endemic equilibrium in two cases. The first one regards the SAIR model (i.e. $\gamma = 0$), and does not requires any further conditions on the parameters besides $\mathcal{R}_0 >1$.

The second is the case of the SAIRS model, 
with the restriction that asymptomatic and symptomatic individuals have the same mean recovery period, i.e. $\delta_A = \delta_I$. In this case, we provided sufficient conditions for the GAS of the endemic equilibrium.

We leave as open problem the study of the global asymptotic stability of the endemic equilibrium for the SAIRS model with vaccination, in the case $\beta_A \neq \beta_I$ and $\delta_A \neq \delta_I$. Lastly, we conjecture that the conditions we derived to prove the asymptotic behaviour of the model are sufficient but not necessary conditions, as our numerical exploration of various settings seems to indicate. 

In this paper, we focused on a generalization of the SAIRS compartmental model proposed in \cite{ottaviano2022global}, by considering a network where each node represents a community; however, many others elements could be included in further generalizations to increase realism. For example, we may consider a greater number of compartments, e.g. including the “Exposed”, “Hospitalised” or “Quarantined” groups, or consider a nonlinear incidence rate; one could also introduce an additional disease-induced mortality, or an imperfect vaccination. We leave these as future research outlook.

\section*{Acknowledgments}
The authors would like to thank Prof.~Andrea Pugliese for the fruitful discussions, suggestions and careful reading of the paper draft.\\

The research of Stefania Ottaviano was supported by the University of Trento in the frame ``SBI-COVID - Squashing the business interruption curve while flattening pandemic curve (grant 40900013)''.\\

Mattia Sensi and Sara Sottile were supported by the Italian Ministry for University and Research (MUR) through the PRIN 2020 project ``Integrated Mathematical Approaches to Socio-Epidemiological Dynamics'' (No. 2020JLWP23).

\section*{Conflict of interest}

This work does not have any conflicts of interest.

\bibliographystyle{plain}
\bibliography{biblio_net}

\renewcommand{\thesection}{A}
\section{Appendix}\label{appendix}

\emph{Proof of Claim \ref{claimRe}}.
We recall that 
\begin{equation*}
\eta_i(z)=\frac{1}{\alpha+\mu_i+\delta_A}\left(z+\sum_{j=1}^n\beta^A_{ij}(A_j^*+h_j I_j^*)(1+K_i^1(z)+K_i^2(z)) \right).
\end{equation*}
It is easy to see that if $\Re(z)\geq 0$, then $\Re(K^1_i(z))>0$. Now, we show that if $\Re(z)\geq0$, then 
\begin{equation}\label{K2i}
 \Re(1+K^2_i(z))=\Re\left(1+\frac{1}{z+\mu_i+\nu_i+\gamma}\left((\delta_A-\nu_i)+\frac{(\delta_I-\nu_i)\alpha}{z+\delta_I+\nu_i}\right)\right)\geq0.
\end{equation}
For ease of notation, we define:
$$\eps=(\delta_I-\nu_i)\alpha, \qquad h_1=\mu_i+\nu_i+\gamma, \qquad h_2= \delta_I+\nu_i,$$
and let $z=a+ib$ in \eqref{K2i}.
If $\delta_A \geq \nu_i$, again it is easy to see that, if $\Re(z)\geq 0$, then
$$\Re\left(\frac{1}{z+h_1}(\delta_A-\nu_i)\right)\geq 0.$$
Now, let us show that 
\begin{equation}\label{re2}
 \Re\left(1+ \frac{\eps}{(z+h_1)(z+h_2)}\right) \geq 0.
\end{equation}
We have that
\begin{align}
\begin{split}
\Re\left(\frac{\eps}{(z+h_1)(z+h_2)}\right)&=\Re\left(\frac{\eps}{(a+h_1)(a+h_2)-b^2+ib (2a+h_1+h_2)}\right)\\
&= \Re\left(\eps\frac{ (a+h_1)(a+h_2)-b^2-ib (2a+h_1+h_2)}{((a+h_1)(a+h_2)-b^2)^2+b^2(2a+h_1+h_2)}\right)\\
&=\eps \frac{(a+h_1)(a+h_2)-b^2}{((a+h_1)(a+h_2)-b^2)^2+b^2(2a+h_1+h_2)}\\
&=\eps \frac{(P-b^2)}{(P-b^2)^2+b^2S^2}=g(b),
\end{split}
\end{align}
where we have introduced the notation
$$P=(a+h_1)(a+h_2) \qquad \text{and} \qquad S=(2a+h_1+h_2).$$
Since we assume $\delta_I \geq \nu_i$, we can see that the minimum of $g(b)$ is equal to $$\frac{-\eps}{2S\sqrt P+S^2},$$
and that
\begin{align*}
 \frac{-\eps}{2S\sqrt P+S^2} &\geq   \frac{-(\delta_I-\nu_i)\alpha}{2(2a+\mu_i+2\nu_i+\gamma+\delta_I)\sqrt{((a+\mu_i+\nu_i+\gamma)(a+\delta_I+\nu_i))}+(2a+\mu_i+2\nu_i+\gamma+\delta_I)^2}\\
&\geq \frac{-(\delta_I-\nu_i)\alpha}{2(\mu_i+2\nu_i+\gamma+\delta_I)\sqrt{((\mu_i+\nu_i+\gamma)(\delta_I+\nu_i))}+(\mu_i+2\nu_i+\gamma +\delta_I)^2}\\
&\geq -1.
\end{align*}
The last inequality holds since by hypothesis
$$(\delta_I-\nu_i)\alpha \leq 2(\mu_i+2\nu_i+\gamma+\delta_I)\sqrt{(\mu_i+\nu_i+\gamma)(\delta_I+\nu_i)}+(\mu_i+2\nu_i+\gamma +\delta_I)^2,$$
thus \eqref{re2} holds and the claim is proved.

\renewcommand{\thesection}{B}
\section{Appendix}\label{appendix2}

In the following tables, we show the times in which the epidemic starts in each community, as well as the magnitude and the times of the peaks, both for asymptomatic ($A$) and symptomatic ($I$) infected individuals, for all the networks under investigations. We consider an epidemic to have started in a community when the variable (either $I(t)$ or $A(t)$) exceeds the threshold value of $10^{-5}$. We remark that the quantity $I$, meaning the fraction of symptomatic individuals, is the one which is more realistically and accurately tracked, in a real-world scenario.

\begin{table}[H]
    \centering
    \begin{tabular}{c|c|c|c}
       \textbf{Community}  &   \textbf{Starting time of epidemic} &  \textbf{Time of peak} &  \textbf{Magnitude of peak}\\\hline 
    1  & 0 & 6.07 & 0.3011 \\
      2  & 0.006 &  6.07 & 0.3080\\
      3  & 0.006 & 6.07 & 0.3291 \\
      4 & 0.1    &  6.75& 0.3220\\
      5      &0.006 & 6.07 & 0.3268\\
      6       &0.65 & 7.64& 0.2826\\
      7        & 0.65 & 7.64& 0.2826\\
      8         & 0.17 & 7.10 & 0.2839\\       
      9         & 0.006 &6.07 &0.3077
    \end{tabular}
    \caption{Values of the starting time of the epidemic, time and magnitude of the peak in each community for the cycle-tree network in Fig. \ref{fig:shapes}(a) for symptomatic infected $I$. See also Figures \ref{fig:real} and \ref{fig:real_tot}.}
    \label{tab:treeI}
\end{table}

\begin{table}[H]
    \centering
    \begin{tabular}{c|c|c|c}
        \textbf{Community}  &   \textbf{Starting time of epidemic} &  \textbf{Time of peak} &  \textbf{Magnitude of peak}\\\hline 
    1  & 0 & 4.72 & 0.2773 \\
      2 & 0  &  4.72 & 0.2884\\
      3  & 0.005  & 4.93 & 0.3222 \\
      4   &0.12   &  5.33& 0.3098\\
      5   & 0.005  & 4.93 & 0.3188\\
      6       & 0.4 & 6.24& 0.2582\\
      7        & 0.4 & 6.24& 0.2582\\
      8         & 0.2& 5.54 & 0.2602\\       
      9         &0  &4.72&0.2878
    \end{tabular}
    \caption{Values of the starting time of the epidemic, time and magnitude of the peak in each community for the cycle-tree network in Fig. \ref{fig:shapes}(a) for asymptomatic infected $A$. See also Figures \ref{fig:real} and \ref{fig:real_tot}.}
    \label{tab:tree}
\end{table}

\begin{table}[H]
    \centering
    \begin{tabular}{c|c|c|c}
       \textbf{Community}  &   \textbf{Starting time of epidemic} &  \textbf{Time of peak} &  \textbf{Magnitude of peak}\\\hline 
     1  &0 & 4.63 & 0.3581 \\
      2    & 0.08 & 5.64& 0.2915\\
      3    & 0.08 & 5.64& 0.2915 \\
      4     & 0.08 & 5.64& 0.2915 \\
      5      & 0.08 & 5.64& 0.2915 \\
      6       &0.08  & 5.64& 0.2915 \\
      7        & 0.08 & 5.64& 0.2915 \\
      8         & 0.08 & 5.64& 0.2915 \\       
      9         &0.08  & 5.64& 0.2915
    \end{tabular}
    \caption{Values of the starting time of the epidemic, time and magnitude of the peak in each community for the star network in Fig. \ref{fig:shapes}(b) for symptomatic infected $I$. See also Figures \ref{fig:stella} and \ref{fig:stella_tot}.}
    \label{tab:starI}
\end{table}

\begin{table}[H]
    \centering
    \begin{tabular}{c|c|c|c}
        \textbf{Community}  &   \textbf{Starting time of epidemic} &  \textbf{Time of peak} &  \textbf{Magnitude of peak}\\\hline 
     1  &0 & 3.48 & 0.3759 \\
      2    & 0& 4.24& 0.2727\\
      3    & 0& 4.24& 0.2727\\
      4     & 0& 4.24& 0.2727\\
      5       & 0& 4.24& 0.2727\\
      6         & 0& 4.24& 0.2727\\
      7         & 0& 4.24& 0.2727\\
      8          & 0& 4.24& 0.2727\\ 
      9         & 0& 4.24& 0.2727\\
    \end{tabular}
    \caption{Values of the starting time of the epidemic, time and magnitude of the peak in each community for the star network in Fig. \ref{fig:shapes}(b) for asymptomatic infected $A$. See also Figures \ref{fig:stella} and \ref{fig:stella_tot}.}
    \label{tab:star}
\end{table}

\begin{table}[H]
    \centering
    \begin{tabular}{c|c|c|c}
        \textbf{Community}  &   \textbf{Starting time of epidemic} &  \textbf{Time of peak} &  \textbf{Magnitude of peak}\\\hline 
       1  &0 & 6.13 & 0.2981 \\
      2   &0.01 & 6.33 &  0.3015\\
      3    &0.4 &  6.73  &0.3041 \\
      4     &0.5 & 7.13& 0.3074 \\
      5      &1.7& 7.52 & 0.3117 \\
      6    &1.7& 7.52 & 0.3117 \\
      7     &0.5 & 7.13& 0.3074 \\
      8    &0.4 &  6.73  &0.3041 \\ 
      9     &0.01 & 6.33 &  0.3015\\
    \end{tabular}
    \caption{Values of the starting time of the epidemic, time and magnitude of the peak in each community for the ring network in Fig. \ref{fig:shapes}(c) for symptomatic infected $I$. See also Figures \ref{fig:anello} and \ref{fig:anello_tot}.}
    \label{tab:ringI}
\end{table}

\begin{table}[H]
    \centering
    \begin{tabular}{c|c|c|c}
        \textbf{Community}  &   \textbf{Starting time of epidemic} &  \textbf{Time of peak} &  \textbf{Magnitude of peak}\\\hline 
       1  &0 & 4.76 & 0.2718 \\
      2   &0 & 4.95 &  0.2771\\
      3    &0.15& 5.34  &0.2822 \\
      4    & 0.27 & 5.72 & 0.2882 \\
      5      &1.23 & 6.13 & 0.2985\\
      6    &1.23 & 6.13 & 0.2985\\
      7    & 0.27 & 5.72 & 0.2882 \\
      8   &0.15& 5.34  &0.2822 \\  
      9      &0 & 4.95 &  0.2771\\
    \end{tabular}
    \caption{Values of the starting time of the epidemic, time and magnitude of the peak in each community for the ring network in Fig. \ref{fig:shapes}(c) for asymptomatic infected $A$. See also Figures \ref{fig:anello} and \ref{fig:anello_tot}.}
    \label{tab:ring}
\end{table}

\begin{table}[H]
    \centering
    \begin{tabular}{c|c|c|c}
        \textbf{Community}  &   \textbf{Starting time of epidemic} &  \textbf{Time of peak} &  \textbf{Magnitude of peak}\\\hline 
   1  &0 & 6.81 &0.2629 \\
      2    & 0.02& 6.61 & 0.2941\\
      3    &0.2 & 6.81& 0.3015\\
      4     & 0.81& 7.19& 0.3029\\
      5      & 1.73& 7.79& 0.3031\\
      6       &1.88 & 8.39& 0.3023\\
      7        & 1.91& 8.99& 0.3011\\
      8         & 2.03& 9.59&0.2975 \\       
      9         &4.19 & 10.41&0.2728
    \end{tabular}
    \caption{Values of the starting time of the epidemic, time and magnitude of the peak in each community for the line network in Fig. \ref{fig:shapes}(d)  for symptomatic infected $I$. See also Figures \ref{fig:strip} and \ref{fig:strip_tot}.}
    \label{tab:stripI}
\end{table}

\begin{table}[H]
    \centering
    \begin{tabular}{c|c|c|c}
        \textbf{Community}  &   \textbf{Starting time of epidemic} &  \textbf{Time of peak} &  \textbf{Magnitude of peak}\\\hline 
   1  &0 &5.40 & 0.2263\\
      2    &0 & 5.26& 0.2650\\
      3    & 0.08& 5.54& 0.2771\\
      4     &0.53 & 5.96& 0.2811\\
      5      &1.26 & 6.42& 0.2814\\
      6       & 1.42&6.99 &0.2807 \\
      7        & 1.57& 7.58& 0.2791\\
      8         & 1.73& 8.39&0.2751 \\       
      9         & 3.64&8.99&0.2450
    \end{tabular}
    \caption{Values of the starting time of the epidemic, time and magnitude of the peak in each community for the line network in Fig. \ref{fig:shapes}(d)  for asymptomatic infected $A$. See also Figures \ref{fig:strip} and \ref{fig:strip_tot}.}
    \label{tab:strip}
\end{table}

\end{document}